\documentclass[final]{siamltex}
%%%%%%%%%%%%%%%%%% NOUVEAU PREAMBULE %%%%%%%%%%%%%%%%%%%%%%%%%%%%
%\newcommand{\spann}{\mathrm{span}}
\usepackage{amsbsy,amsmath,amsfonts,amssymb,graphicx,color}
\usepackage[all]{xy}
\usepackage{geometry} 
%\usepackage{showkeys}
%\usepackage{url}
%\theoremstyle{plain}
%\newtheorem{theorem}{Theorem}
%\newtheorem{lemma}{Lemma}
%\newtheorem{proposition}{Proposition}
%\newtheorem{corollary}{Corollary}
%\newtheorem{rem}{Remark}
%\newtheorem{definition}{Definition}

%\newcommand{{\rm Id}}{{\mathbb I}{\rm d}}
%\theoremstyle{definition}
%\newtheorem{defn}[theorem]{Definition}'%\newtheorem{question}[theorem]{Question}
%
%\theoremstyle{remark}
%\newtheorem{rem}[theorem]{Remark}
%\hypersetup{ 
%backref=true, %permet d'ajouter des liens dans... 
%pagebackref=true,%...les bibliographies 
%hyperindex=true, %ajoute des liens dans les index. 
%colorlinks=true, %colorise les liens 
%breaklinks=true, %permet le retour � la ligne dans les liens trop longs 
%urlcolor= blue, %couleur des hyperliens 
%linkcolor= blue, %couleur des liens internes 
%bookmarks=true, %cr�� des signets pour Acrobat 
%bookmarksopen=true, %si les signets Acrobat sont cr��s, 
%les afficher compl�tement. 
%bookmarksnumbered=true,
%pdftitle={Passive and self-propelled locomotion of an elastic body in a perfect fluid}, %informations apparaissant dans 
%pdfauthor={A. Munnier}, %dans les informations du document 
%pdfsubject={This paper deals with a model of elastic body swimming in a perfect fluid},
%pdfkeywords={} %sous Acrobat. 
%} 
%====================================================
%====================================================
\date{\today}
%\author{Alexandre Munnier\footnote{E-mail: \href{mailto : alexandre.munnier@iecn.u-nancy.fr}{\texttt{alexandre.munnier@iecn.u-nancy.fr}}, Institut Elie Cartan UMR 7502, Nancy-Universit\'e,
%CNRS, INRIA, B.P.~239, F-54506 Vandoeuvre-l\`es-Nancy Cedex,
%France, and INRIA
%Lorraine, Projet CORIDA.}} 
\title{Controllability of 3D Low Reynolds Swimmers}
\author{J\'er\^ome Loh\'eac\thanks{E-mail: \texttt{jerome.loheac@iecn.u-nancy.fr}  and  \texttt{alexandre.munnier@iecn.u-nancy.fr}} \and Alexandre
Munnier$^{\ast}$\thanks{Both authors are with Institut \'Elie Cartan UMR 7502, Nancy-Universit\'e,
CNRS, INRIA, B.P.~239, F-54506 Vandoeuvre-l\`es-Nancy Cedex,
France, and INRIA
Nancy Grand Est, Projet CORIDA. Authors both supported by ANR CISIFS. Second author supported by ANR GAOS.}}
\begin{document}
\maketitle
\begin{abstract}
In this article, we consider a swimmer (i.e. a self-deformable body) immersed in a fluid, the flow of which is governed by the stationary Stokes equations. This model is relevant for studying the locomotion of microorganisms or micro robots for which the inertia effects can be neglected. Our first main contribution is to prove that any such microswimmer has the ability to track, by performing a sequence of shape changes, any given trajectory in the fluid. We show that, in addition,  this can be done by means of arbitrarily small body deformations that can be superimposed to any preassigned sequence of macro shape changes. Our second contribution is to prove that, when no macro deformations are prescribed, tracking is generically possible by means of shape changes obtained as a suitable combination of only four elementary deformations. Eventually, still considering finite dimensional deformations, we state results about the existence of optimal swimming strategies for a wide class of cost functionals.
\end{abstract}

\begin{keywords} 
Locomotion, Biomechanics, Stokes fluid, Geometric control theory
\end{keywords}

\begin{AMS}
74F10, 70S05, 76B03, 93B27
\end{AMS}

\section{Introduction}
\subsection{Context}
Relevant models for the locomotion of microorganisms can be tracked back to the work of Taylor \cite{Taylor:1951aa}, Lighthill \cite{Lighthill:1952aa, Lighthill:1975aa}, and Childress \cite{Childress:1981aa}. 
Purcell explains in \cite{Purcell:1977aa} that these sort of animals are the order of a micron in size and they move around with a typical speed of 30 micron/sec. These data lead the flow regime to be characterized by a very small Reynolds number. For such swimmers, inertia effects play no role and the motion is entirely determined by the friction forces. 

In this article, the swimmer is modeled as a self deforming-body. By changing its shape, it set the surrounding fluid into motion and generates hydrodynamics forces used to propel and steer itself.  We are interested in investigating whether the microswimmer is able to control its trajectory by means of appropriate shape deformations (as real microorganisms do). This question has already be tackled in some specific cases. Let us mention \cite{Shapere:1989aa} (the authors study the motion of infinite cylinders with various cross sections and the swimming of spheres undergoing infinitesimal shape variations) and \cite{Alouges:2008aa} (in which the 1D controllability of a swimmer made of three spheres is investigated). 

Our contribution to this question is several folds. First, we give a definitive answer to the control problem in the general case: the swimmer we consider has any shape at rest (obtained as the image by a $C^1$ diffeomorphism of the unit ball) and can undergo any kind of shape deformations (as long as they can also be obtained as images of the unit ball by $C^1$ diffeomorphisms).  With these settings, we prove that the dynamical system governing the swimmer's motion in the fluid is controllable in the following sense: for any prescribed trajectory (i.e. given positions and orientations of the swimmer at every moment) there exists a sequence of shape changes that make him swim arbitrarily close to this trajectory. A somewhat surprising additional result is that this can be done by means of arbitrarily small shape changes which can be superimposed to any preassigned macro deformations (this is called the ability of {\it synchronized swimming} in the sequel). 
Second, when no macro deformations are prescribed (this is called {\it freestyle swimming} in the paper), we prove that the ability of tracking any trajectory is possible by means of shape changes obtained as an appropriate combination of only four elementary deformations (satisfying some generic assumptions). 
Third, we state a result about the existence of optimal swimming. 

Notice that the paper follows the lines of \cite{chamb_munnier_arxiv} in which the authors study the controllability of a swimmer in a perfect fluid.
\subsection{Modeling}
\subsubsection*{Kinematics}
We assume that the swimmer is the only immersed body in the fluid and that the fluid-swimmer system fills the whole space, identified with $\mathbf R^3$. Two frames are required in the modeling: The first one $\mathfrak E:=(\mathbf E_1,\mathbf E_2,\mathbf E_3)$ is fixed and Galilean and the second one $\mathfrak e:=(\mathbf e_1,\mathbf e_2,\mathbf e_3)$ is attached to the swimming body. At any moment, there exist a rotation matrix $R\in {\rm SO}(3)$ and a vector $\mathbf r\in\mathbf R^3$ such that, if $X:=(X_1,X_2,X_3)^\ast$ and $x:=(x_1,x_2,x_3)^\ast$ are the coordinates of a same vector in respectively $\mathfrak E$ and $\mathfrak e$, then the equality $X=Rx+\mathbf r$ holds. The matrix $R$ is meant to give also the {\it orientation} of the swimmer. The rigid displacement of the swimmer, on a time interval $[0,T]$ ($T>0$), is thoroughly described by the functions $t:[0,T]\mapsto R(t)\in{\rm SO}(3)$ and $t:[0,T]\mapsto \mathbf r(t)\in\mathbf R^3$, which are the unknowns of our problem. Denoting their time derivatives by $\dot R$ and $\dot{\mathbf r}$, we can define  the linear velocity $\mathbf v:=(v_1,v_2,v_3)^\ast\in\mathbf R^3$ and angular velocity vector $\boldsymbol\Omega:=(\Omega_1,\Omega_2,\Omega_3)^\ast\in\mathbf R^3$ (both in $\mathfrak e$) by respectively $\mathbf v:=R^\ast\dot{\mathbf r}$ and $\hat{\boldsymbol\Omega}:= R^\ast\dot R$, where for every vector $\mathbf u:=(u_1,u_2,u_3)^\ast\in\mathbf R^3$, $\hat{\mathbf u}$ is the unique skew-symmetric matrix satisfying $\hat{\mathbf u}x:=\mathbf u\times x$ for every $x\in\mathbf R^3$. 

\subsubsection*{Shape Changes}
Unless otherwise indicated, from now on all of the quantities will be expressed in the body frame $\mathfrak{e}$. In our modeling, the domains occupied by the swimmer are images of the closed unit ball $\bar B$ by $C^1$ diffeomorphisms, isotopic the identity, and tending to the identity at infinity, i.e. having the form ${\rm Id}+\vartheta$ where $\vartheta$ belongs to $D^1_0(\mathbf R^3)$ (the definitions of all of the function spaces are collected in the appendix, Section~\ref{SEC:diffeo}). With these settings, the shape changes over a time interval $[0,T]$ can be simply prescribed by means of functions $t\in[0,T]\mapsto \vartheta_t\in D^1_0(\mathbf R^3)$ lying in $W^{1,1}([0,T],D^1_0(\mathbf R^3))$. Then, denoting $\varTheta_t:={\rm Id}+\vartheta_t$, the domain occupied by the swimmer at every time $t\geq 0$  is the closed, bounded, connected set $\bar{\mathcal B}_t:=\varTheta_t(\bar B)$ (keep in mind that we are working in the frame $\mathfrak e$) and $\mathbf w_t:=\partial_t\varTheta(\varTheta^{-1})$ is the swimmer's Eulerian velocity of deformation. We shall denote $\Sigma:=\partial B$ the unit ball's boundary while $\Sigma_t:=\varTheta_t(\Sigma)$ will stand for the body-fluid interface.  The unit normal vector to $\Sigma_t$ directed toward the interior of $\mathcal B_t$ is $\mathbf n_t$ and the fluid fills the exterior open set $\mathcal F_t:=\mathbf R^3\setminus\bar{\mathcal B}_t$. 

\subsubsection*{The Flow}
The flow is governed by the stationary Stokes equations. They read (in the body frame $\mathfrak e$):
$$-\mu\Delta \mathbf u+\nabla p =0,\qquad\nabla\cdot\mathbf u=0\quad\text{in }\mathcal F_t\quad (t>0),$$
where $\mu$ is the viscosity, $\mathbf u$ the Eulerian velocity of the fluid and $p$ the pressure. These equations have to be complemented with the no-slip boundary conditions: $\mathbf u = \boldsymbol\Omega\times x+\mathbf v+\mathbf w_t$ on $\Sigma_t$. The linearity of these equations leads to introducing the elementary velocities and pressures $(\mathbf u_i, p_i)$ ($i=1,\ldots,6$) and $(\mathbf u_d,p_d)$, defined as the solutions to the Stokes equations with the boundary conditions $\mathbf u_i=\mathbf e_i\times x$ ($i=1,2,3$), 
$\mathbf u_i=\mathbf e_{i-3}$ ($i=4,5,6$) and $\mathbf u_d=\mathbf w_t$ on $\Sigma_t$. Then, the velocity $\mathbf u$ and the pressure $p$ can be decomposed as $\mathbf u=\sum_{i=1}^3\Omega_i\mathbf u_i+\sum_{i=4}^6 v_{i-3}\mathbf u_i+\mathbf u_d$ and $p=\sum_{i=1}^3\Omega_i p_i+\sum_{i=4}^6 v_{i-3} p_i+p_d$.
Notice that the pairs $(\mathbf u_i,p_i)$ ($i=1,\ldots,6$) and $(\mathbf u_d,p_d)$ are well-defined in the weighted Sobolev spaces $(W^1_0(\mathcal F_t))^3\times L^2(\mathcal F_t)$ (see the Appendix, Section~\ref{SEC:Stokes}).
 %========================================
 \begin{figure} \label{FIG_Kinematic}
 \centerline{\input{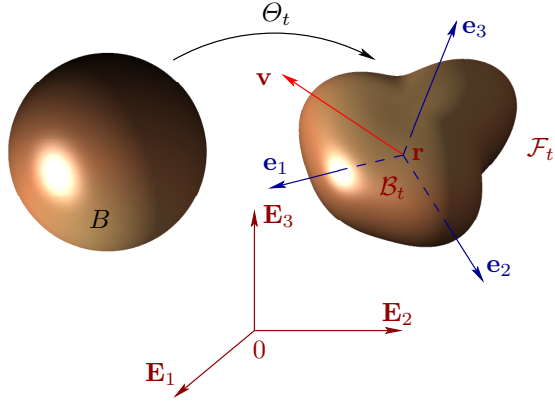}}
 \caption{Kinematics of the model: The Galilean frame $\mathfrak E:=(\mathbf E_j)_{1\leq j\leq 3}$ and the
body frame $\mathfrak e:=(\mathbf e_j)_{1\leq j\leq 3}$ with $\mathbf e_j=R\,\mathbf E_j$
 ($R\in{\rm SO}(3)$). Quantities are mostly expressed in
 the body frame. The domain of the body is $\bar{\mathcal B}_t$ at the time $t$ and $\mathcal B_t$ is the image of the
 unit ball $B$ by a diffeomorphism $\varTheta_t$. The open set $\mathcal F_t:=\mathbf R^3\setminus\bar{\mathcal B}_t$ is the domain of the fluid. The position of the swimmer is given by the vector $\mathbf r$ (in $\mathfrak E$) and its orientation by $R\in{\rm SO}(3)$. The vector $\mathbf v:=R^\ast\dot{\mathbf r}$ is the translational velocity (in $\mathfrak e$).}
 \end{figure}
 %=========================================
%We denote $\mathcal B_s:=\Phi_s(\mathcal B)$, $\mathcal F_s:=\mathbf R^3\setminus\bar{\mathcal B}_s$ and $\Sigma_s:=\Phi_s(\Sigma)$.  We define the deformation tensor $\mathbb F_s(x):=D\Phi_s(x)$ and $J_s(x):=|\det \mathbb F_s(x)|$. The density inside the body is $\varrho_s$ and $\varrho_f$ is the density of the fluid. The density of the body can be determined according to the conservation of mass principle:
%$$\varrho_s(x):=\varrho^\ast(\Phi_s^{-1}(x))/J_s(\Phi_s^{-1}(x)).$$
%With this construction, the {\it self-propelled conditions} are automatically satisfied:
%$$\int_{\mathcal B_s}\varrho_s(x)w_i(x)\,{\rm d}x=0\quad(i=1,\ldots,n),\quad(\text{conservation of linear momentum}),$$
%and, for all $i=1,\ldots,n$
%$$\int_{\mathcal B_s}\varrho_s(x)w_i(x)\times x\,{\rm d}x=0\quad(\text{conservation of angular momentum}).$$
\subsubsection*{Dynamics}
As already pointed out before, for microswimmers, the inertia effects are neglected in the modeling. Newton's laws reduce to $\int_{\Sigma_t}\mathbb T(\mathbf u,p)\mathbf n_t\times x\,{\rm d}\sigma =\mathbf 0$ (balance of angular momentum) and $\int_{\Sigma_t}\mathbb T(\mathbf u,p)\mathbf n_t\,{\rm d}\sigma =\mathbf 0$ (balance of linear momentum) where $\mathbb T(\mathbf u,p):=2\mu D(\mathbf u)-p{\rm Id}$ is the stress tensor of the fluid, with $D(\mathbf u):=(\nabla \mathbf u+\nabla \mathbf u^\ast)/2$. The stress tensor is linear with respect to $(\mathbf u,p)$ so it can be decomposed into $\mathbb T(\mathbf u,p)=\sum_{i=1}^3\Omega_i\mathbb T(\mathbf u_i,p_i)+\sum_{i=4}^6v_{i-3}\mathbb T(\mathbf u_i,p_i)+\mathbb T(\mathbf u_d,p_d)$. In order to rewrite Newton's laws in a short compact form, we introduce the
$6\times 6$ matrix $\mathbb M(t)$ whose entries
are $$M_{ij}(t):=\begin{cases}\int_{\Sigma_t}\mathbf
e_i\cdot(\mathbb T(\mathbf u_j,p_j)\mathbf n_t\times x){\rm
d}\sigma=\int_{\Sigma_t}(x\times \mathbf
e_i)\cdot\mathbb T(\mathbf u_j,p_j)\mathbf n_t{\rm d}\sigma& (1\leq i\leq 3,\, 1\leq j\leq 6);\\
\int_{\Sigma_t}\mathbf
e_{i-3}\cdot\mathbb T(\mathbf u_j,p_j)\mathbf n_t{\rm d}\sigma&(4\leq i\leq 6,\,1\leq j\leq
6);
\end{cases}
$$
 and $\mathbf N(t)$, the vector of $\mathbf R^6$ whose entries are 
 $$N_{i}(t):=\begin{cases}\int_{\Sigma_t}\mathbf
e_i\cdot(\mathbb T(\mathbf u_d,p_d)\mathbf n_t\times x){\rm
d}\sigma=\int_{\Sigma_t}(x\times \mathbf
e_i)\cdot\mathbb T(\mathbf u_d,p_d)\mathbf n_t{\rm d}\sigma& (1\leq i\leq 3);\\
\int_{\Sigma_t}\mathbf
e_{i-3}\cdot\mathbb T(\mathbf u_d,p_d)\mathbf n_t{\rm d}\sigma& (4\leq i\leq 6).
\end{cases}
$$
With these settings, Newton's laws take the convenient form $\mathbb M(t)(\boldsymbol\Omega,\mathbf v)^\ast+\mathbf N(t)=0$. Upon an integration by parts, the entries of
the matrix $\mathbb M(t)$ can be rewritten as $M_{ij}(\mathbf s):=2\mu\int_{\mathcal F_t}D(\mathbf u_i):D(\mathbf u_j){\rm d}x$, whence we deduce
that $\mathbb M(t)$ is symmetric and positive definite. We infer that the swimming motion is governed by the equation:
\begin{subequations}
\label{main_dynamics}
\begin{equation}
\label{dynamics}
\begin{pmatrix}\boldsymbol\Omega\\
\mathbf v\end{pmatrix}=-\mathbb M(t)^{-1}\mathbf N(t),\qquad (0\leq t\leq T).
\end{equation}
To determine the rigid motion in the fixed frame $\mathfrak E$, Equation \eqref{dynamics} has to be supplemented with the ODE:
\begin{equation}
\label{complement}
\frac{d}{dt}\begin{pmatrix}
R\\
{\mathbf r}
\end{pmatrix}=
\begin{pmatrix}
R\,\hat{\boldsymbol\Omega}\\
R\,\mathbf v
\end{pmatrix},\qquad(0<t<T),
\end{equation}
\end{subequations}
together with Cauchy data for $R(0)$ and $\mathbf r(0)$.
At this point, we can identify the control as being the function $t\in[0,T]\mapsto \vartheta_t\in D_0^1(\mathbf R^3)$. Notice that the dependence of the dynamics in the control is strongly nonlinear. Indeed $\vartheta_t$ describes the shape of the body and hence also the domain of the fluid in which are set the PDEs of the elementary velocity fields involved in the expressions of the matrices $\mathbb M(t)$ and $\mathbf N(t)$. 

Considering \eqref{main_dynamics}, we deduce as a first nice result:
\begin{proposition}
The dynamics of a microswimmer is independent of  the viscosity of the fluid.
Or, in other words, the same shape changes produce the same rigid displacement,
whatever the viscosity of the fluid is.
\end{proposition}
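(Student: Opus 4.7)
My plan is to exploit the scale invariance of the stationary Stokes equations under a uniform rescaling of the viscosity. The key observation is: if $(\mathbf{u},p)$ is a solution of the Stokes system in $\mathcal{F}_t$ with viscosity $\mu$ and Dirichlet data $g$ on $\Sigma_t$, then $(\mathbf{u},\mu^{-1}p)$ solves the exact same boundary value problem with viscosity $1$ and data $g$. Since the map $(\mathbf{u},p)\mapsto(\mathbf{u},\mu^{-1}p)$ is a bijection of $(W^1_0(\mathcal{F}_t))^3\times L^2(\mathcal{F}_t)$ onto itself, uniqueness in the weighted Sobolev framework recalled in the appendix guarantees that the velocity component $\mathbf{u}$ is entirely independent of $\mu$, while the pressure scales linearly with $\mu$.

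Applied to the elementary problems defining $(\mathbf{u}_i,p_i)$ for $i=1,\dots,6$ and $(\mathbf{u}_d,p_d)$, this gives $\mathbf{u}_i=\mathbf{u}_i^0$ and $p_i=\mu p_i^0$, and similarly for the deformation mode, where the superscript $0$ refers to the $\mu=1$ solutions (which depend on $t$ only through the geometry $\Sigma_t$, i.e. through the control $\vartheta_t$). The stress tensor then reads
\[
\mathbb{T}(\mathbf{u}_i,p_i)=2\mu D(\mathbf{u}_i^0)-\mu p_i^0\,\mathrm{Id}=\mu\,\mathbb{T}_0(\mathbf{u}_i^0,p_i^0),
\]
so both summands share the same linear dependence on $\mu$, and likewise for $\mathbb{T}(\mathbf{u}_d,p_d)$.

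Inserting this into the definitions of the entries of $\mathbb{M}(t)$ and $\mathbf{N}(t)$ shows that both are homogeneous of degree one in $\mu$: there exist $\mathbb{M}_0(t)$ and $\mathbf{N}_0(t)$, built only from the $\mu=1$ fields, such that $\mathbb{M}(t)=\mu\,\mathbb{M}_0(t)$ and $\mathbf{N}(t)=\mu\,\mathbf{N}_0(t)$. Substituting in \eqref{dynamics}, the factor $\mu$ cancels:
\[
\begin{pmatrix}\boldsymbol\Omega\\ \mathbf{v}\end{pmatrix}=-\mathbb{M}(t)^{-1}\mathbf{N}(t)=-\mathbb{M}_0(t)^{-1}\mathbf{N}_0(t),
\]
an expression independent of $\mu$. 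Because the reconstruction ODE \eqref{complement} involves only $(\boldsymbol\Omega,\mathbf{v})$ and the prescribed Cauchy data, the rigid displacement $(R(t),\mathbf{r}(t))$ is also independent of the viscosity, which is exactly the claim. There is no real obstacle here; the only point deserving a line of care is that the rescaling respects the functional setting used for well-posedness, but this is immediate from the bijectivity noted above.
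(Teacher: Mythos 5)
Your proof is correct and takes essentially the same route as the paper's: both rest on the observation that rescaling the viscosity leaves the elementary velocity fields unchanged and only rescales the pressures linearly, so that $\mathbb{M}(t)$ and $\mathbf{N}(t)$ are both homogeneous of degree one in $\mu$ and the factor cancels in $\mathbb{M}(t)^{-1}\mathbf{N}(t)$. Your write-up is a bit more explicit than the paper's terse remark that the dynamics ``depends only on the Eulerian velocities'' (which implicitly invokes the integrated-by-parts form $M_{ij}=2\mu\int_{\mathcal F_t}D(\mathbf u_i):D(\mathbf u_j)\,{\rm d}x$); you instead track the $\mu$-dependence directly through the stress tensor, which is equivalent and arguably clearer.
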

\begin{proof}
Let $(\mathbf u_j,p_j)$ be an elementary solution (as defined in the
modeling above) to the Stokes equations corresponding to the viscosity $\mu>0$,
then $(\mathbf u_j,(\tilde\mu/\mu) p_j)$ is the same elementary
solution corresponding to the viscosity $\tilde\mu>0$. Since the
Euler-Lagrange equation \eqref{main_dynamics} depends only on the Eulerian velocities $\mathbf
u_j$, the proof is completed.
\end{proof}

As a consequence of this Proposition we will set $\mu=1$ in the sequel.
%=======================================================
\subsubsection*{Self-propelled constraints}
For our model to be more realistic, the swimmer's shape changes, instead of being preassigned, should be resulting from the interactions between some internal forces and the hydrodynamical forces exerted by the fluid on the body's surface. To do so, the dynamics \eqref{main_dynamics} should be supplemented with a set of equations (for instance PDEs of elasticity) allowing the shape changes to be computed from given internal forces. However, this would make the problem of locomotion much more involved and is beyond the scope of this paper. For weighted swimmers, this issue can be circumvented by adding constraints ensuring that the body's center of mass and moment of inertia are deformation invariant in the body frame. Unfortunately, massless microswimmers have no center of mass and their moment of inertia is always zero. 

To highlight the fact that constraints have still to be imposed to the shape changes for the control problem to make sense, consider the following result: 
\begin{proposition}
Let $\vartheta,\,\vartheta^\dag\in W^{1,1}([0,T],D^1_0(\mathbf R^3))$ be two control functions such that  $\varTheta:={\rm Id}+\vartheta$ and $\varTheta^\dag:={\rm Id}+\vartheta^\dag$ differ up to a rigid displacement on the unit sphere (more precisely, for every $t\in[0,T]$, there exists $(Q(t),\mathbf s(t))\in{\rm SO}(3)\times\mathbf R^3$ such that $(Q(0),\mathbf s(0))=({\rm Id},\mathbf 0)$ and $\varTheta^\dag_t|_{\Sigma}=Q(t)\varTheta_t|_\Sigma+\mathbf s(t)$). 
Then, denoting by $t\in[0,T]\mapsto (R(t),\mathbf r(t))\in{\rm SO}(3)\times\mathbf R^3$ a solution (if any) to System \eqref{main_dynamics} with Cauchy data $(R_0,\mathbf r_0)\in {\rm SO}(3)\times\mathbf R^3$, we get that the function $t\in[0,T]\mapsto (R^\dag(t),\mathbf r^\dag(t)):=(R(t)Q(t)^\ast,\mathbf r(t)-R(t)Q(t)^\ast\mathbf s(t))\in{\rm SO}(3)\times\mathbf R^3$ is also a solution with the same Cauchy data but control $\vartheta^\dag$. In particular $R^\dag(t)\varTheta_t^\dag+\mathbf r^\dag(t)=R(t)\varTheta_t+\mathbf r(t)$ for all $t\in[0,T]$ (i.e. the swimmer's global motion is the same in both cases).
\end{proposition}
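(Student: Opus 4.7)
The plan is to verify that the proposed $(R^\dag,\mathbf r^\dag)$ solves the full system \eqref{main_dynamics} for the control $\vartheta^\dag$, by checking in turn the Cauchy data, the kinematic ODE \eqref{complement}, and the self-propulsion equation \eqref{dynamics}. The initial conditions are immediate from $(Q(0),\mathbf s(0))=({\rm Id},\mathbf 0)$, and \eqref{complement} for the $\dag$ variables will hold by the very definition of $(\boldsymbol\Omega^\dag,\mathbf v^\dag)$ via $\hat{\boldsymbol\Omega}^\dag=(R^\dag)^{\ast}\dot R^\dag$ and $\mathbf v^\dag=(R^\dag)^{\ast}\dot{\mathbf r}^\dag$; so the essential task is to establish the self-propelled constraint for the new control.

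I would first prove the global-motion identity stated at the end of the proposition. For any $\xi\in\Sigma$, a direct substitution gives
\begin{equation*}
R^\dag(t)\varTheta_t^\dag(\xi)+\mathbf r^\dag(t)=R(t)Q(t)^{\ast}\bigl(Q(t)\varTheta_t(\xi)+\mathbf s(t)\bigr)+\mathbf r(t)-R(t)Q(t)^{\ast}\mathbf s(t)=R(t)\varTheta_t(\xi)+\mathbf r(t),
\end{equation*}
so in the Galilean frame $\mathfrak E$ the two setups produce the same Lagrangian parametrization of the swimmer's boundary and, by time differentiation, the same Eulerian velocity of the body-fluid interface.

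The key point is now that the Stokes system and the self-propulsion constraint depend only on the instantaneous interface and its velocity as seen in $\mathfrak E$. Posing the stationary Stokes equations in $\mathfrak E$ with no-slip data given by the common surface velocity computed above yields, by uniqueness in the weighted Sobolev setting of Section~\ref{SEC:Stokes}, a fluid velocity and pressure that are identical in the two scenarios. The resultant force and torque on the swimmer are closed surface integrals of the Cauchy stress on the interface, intrinsic quantities in $\mathfrak E$, so they vanish for $(\vartheta^\dag,R^\dag,\mathbf r^\dag)$ if (as assumed) they do for $(\vartheta,R,\mathbf r)$. Pulling these integrals back to the new body frame via $X\mapsto(R^\dag)^{\ast}(X-\mathbf r^\dag)$ recovers exactly $\mathbb M^\dag(t)(\boldsymbol\Omega^\dag,\mathbf v^\dag)^{\ast}+\mathbf N^\dag(t)=\mathbf 0$.

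I expect the main technical obstacle to be formalizing this last change of frame: the matrices $\mathbb M^\dag$ and $\mathbf N^\dag$ are defined from Stokes problems posed on $\mathcal F_t^\dag=Q(t)\mathcal F_t+\mathbf s(t)$, and one has to check that the Euclidean conjugation $x\mapsto Q(t)x+\mathbf s(t)$ maps the elementary fields $(\mathbf u_j,p_j)$ and $(\mathbf u_d,p_d)$ to their $\dag$ analogues with the correct boundary data, so that the stress integrals in the two body frames agree after applying the rotation $Q(t)$. This rests on the invariance of the Stokes operator and of the tensor $D(\mathbf u_i):D(\mathbf u_j)$ under orthogonal changes of variables, and should reduce to routine bookkeeping once set up carefully.
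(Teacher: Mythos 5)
Your proposal is correct in its logic and takes a genuinely different route from the paper's proof. The paper never leaves the two body frames: it compares them directly through the map $x\mapsto Q(t)x+\mathbf s(t)$, derives the explicit transformation laws for the elementary velocity fields ($\mathbf u_i(t,x)=Q^\ast\mathbf u_i^\dag(Qx+\mathbf s)$) and for the deformation velocities ($\mathbf w_t+\boldsymbol\chi\times x+\boldsymbol\zeta=Q^\ast\mathbf w_t^\dag(Q\cdot+\mathbf s)$, with $\hat{\boldsymbol\chi}=Q^\ast\dot Q$, $\boldsymbol\zeta=Q^\ast\dot{\mathbf s}$), turns these into matrix identities $\mathbb M=\mathbb Q^\ast\mathbb M^\dag\mathbb Q$ and $\mathbf N+\mathbb M(\boldsymbol\chi,\boldsymbol\zeta)^\ast=\mathbb Q^\ast\mathbf N^\dag$, solves for $(\boldsymbol\Omega^\dag,\mathbf v^\dag)$ and then integrates \eqref{complement} forward to produce $(R^\dag,\mathbf r^\dag)$; the global-motion identity appears only at the very end as a corollary. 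You instead take that identity as the algebraic starting point, pass to the Galilean frame $\mathfrak E$, and argue by uniqueness of the exterior Stokes flow and frame-invariance of the force and torque balances that the candidate $(R^\dag,\mathbf r^\dag)$ already satisfies \eqref{dynamics}; this is a verification argument rather than an integration argument, and it makes the extra rigid terms $\boldsymbol\chi\times x+\boldsymbol\zeta$ in the paper's boundary-velocity relation invisible, because they get absorbed automatically into the redefinition of $(\boldsymbol\Omega^\dag,\mathbf v^\dag)$. What the paper's route buys is the explicit and reusable conjugation formulas for $\mathbb M$ and $\mathbf N$; what your route buys is a shorter, more conceptual argument that isolates the physical reason the result holds. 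One small caution: when you say the vanishing of the torque transfers between setups, you implicitly use that since the net force vanishes the torque is independent of the reference point (needed because the two body-frame origins $\mathbf r$ and $\mathbf r^\dag$ differ); it would be worth stating this explicitly when carrying out the change-of-frame bookkeeping you defer at the end.
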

\begin{proof} If we denote by $\mathbf u_i(t)$ ($i=1,\ldots,6$) (respectively $\mathbf u^\dag_i(t)$) the elementary velocity fields obtained with the control function $\vartheta$ (respectively $\vartheta^\dag$), it can be verified that $\mathbf u_i(t,x)=Q(t)^\ast\mathbf u_i^\dag(Q(t)x+\mathbf s(t))$ for every $t\in[0,T]$, every $x\in\mathcal F_t$ and every $i=1,\ldots,6$. We deduce that $\mathbb M(t)=\mathbb Q(t)^\ast\mathbb M^\dag(t)\mathbb Q(t)$
where the elements of $\mathbb M(t)$ (respectively $\mathbb M^\dag(t)$) have been computed with the elementary velocity fields $\mathbf u_i(t)$ (respectively $\mathbf u_i^\dag(t)$) and $\mathbb Q(t)\in{\rm SO}(6)$ is the bloc diagonal matrix ${\rm diag}(Q(t),Q(t))$. On the other hand, denoting respectively by $\mathbf w_t(x)=\partial_t\varTheta_t(\varTheta_t^{-1}(x))$ and $\mathbf w^\dag_t(x)=\partial_t\varTheta^\dag_t(\varTheta_t^{\dag-1}(x))$
the boundary velocity of the swimmer in both cases, we get the relation: $\mathbf w_t(x)+\boldsymbol\chi(t)\times x+\boldsymbol\zeta(t)=Q(t)^\ast\mathbf w_t^\dag(Q(t)x+\mathbf s(t))$ for all $t\in[0,T]$, where $\hat{\boldsymbol\chi}(t):=Q(t)^\ast \dot Q(t)$ and $\boldsymbol\zeta(t):=Q(t)^\ast\dot{\mathbf s}(t)$.
With obvious notation, we deduce that $\mathbf N(t)+\mathbb M(t)(\boldsymbol\chi(t),\boldsymbol\zeta(t))^\ast=\mathbb Q(t)^\ast\mathbf N^\dag(t)$.
If we set now $(\boldsymbol\Omega,\mathbf v)^\ast:=-\mathbb M(t)\mathbf N(t)$ and $(\boldsymbol\Omega^\dag,\mathbf v^\dag)^\ast:=-\mathbb M^\dag(t)\mathbf N^\dag(t)$, we get the identity $(\boldsymbol\Omega^\dag,\mathbf v^\dag)^\ast=\mathbb Q(t)(\boldsymbol\Omega-\boldsymbol\chi,\mathbf v-\boldsymbol\zeta)^\ast$. It suffices to integrate this relation, taking into account that $(Q(0),\mathbf s(0))=({\rm Id},\mathbf 0)$, to obtain the conclusion of the Proposition and to complete the proof.
\end{proof}

If we apply this proposition with $\vartheta$ constant in time (the boundary of the swimmer is $\varTheta(\Sigma)$ at any time), we deduce that any shape change which reduces to a rigid deformation $Q(t)x+\mathbf s(t)$ on the swimmer's boundary $\varTheta(\Sigma)$ will produce a displacement $(Q(t)^\ast,-Q(t)^\ast\mathbf s(t))$. But if we compute the global motion of the swimmer, we obtain $Q^\ast(t)(Q(t)\varTheta(x)+\mathbf s(t))-Q(t)^\ast\mathbf s(t)=\varTheta(x)$ for every $x\in\Sigma$ and every time $t$ which means that the swimmer is actually motionless (the rigid deformation of the swimmer's boundary is counterbalanced by its rigid displacement). To prevent this from happening, we add the following constraints to the deformations (inspired by the so-called {\it self-propelled constraints} for weighted swimmers, see for instance \cite{Chambrion:2011aa}):
\begin{equation}
\label{self-propelled-cond}
\int_{\Sigma}\varTheta_t(x)\,{\rm d}\sigma=\mathbf 0\quad(\text{for all }t\in[0,T])\text{ and}\quad
\int_{\Sigma}\partial_t\varTheta_t(x)\times \varTheta_t(x)\,{\rm d}\sigma=\mathbf 0\quad(\text{for a.e. }t\in[0,T]).
\end{equation}
%Notice that since we are dealing with Banach-space valued functions, these integrals have to be understood as Bochner integrals (see \cite{Schwabik:2005aa}).
About the existence of such deformations, we have in particular:
\begin{proposition}
\label{PROP:1_1}
For every function $\vartheta$ in $W^{1,1}([0,T],D^1_0(\mathbf R^3))$ such that $\int_{\Sigma}\varTheta_{t=0}(x)\,{\rm d}\sigma=\mathbf 0$, there exists a function $\vartheta^\dag$ in $W^{1,1}([0,T],D^1_0(\mathbf R^3))$ satisfying \eqref{self-propelled-cond} and an unique absolutely continuous rigid displacement $t\in[0,T]\mapsto (Q(t),\mathbf s(t))\in{\rm SO}(3)\times\mathbf R^3$ such that $Q(0)={\rm Id}$, $\mathbf s(0)=\mathbf 0$ and $\varTheta^\dag_t|_{\Sigma}=(Q(t)\varTheta_t+\mathbf s(t))|_{\Sigma}$ for every $t\in[0,T]$.
\end{proposition}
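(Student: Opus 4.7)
The strategy is to treat the rigid motion $(Q(t),\mathbf{s}(t))$ as the unknown and to force $\varTheta^\dag_t := Q(t)\varTheta_t + \mathbf{s}(t)$ (on $\Sigma$) to satisfy the two constraints \eqref{self-propelled-cond}. The first constraint is purely algebraic: since $\int_\Sigma{\rm d}\sigma = 4\pi$, imposing $\int_\Sigma \varTheta^\dag_t\,{\rm d}\sigma = \mathbf{0}$ forces
$$\mathbf{s}(t) = -\frac{1}{4\pi}\,Q(t)\int_{\Sigma}\varTheta_t(x)\,{\rm d}\sigma,$$
which is compatible with $(Q(0),\mathbf{s}(0))=({\rm Id},\mathbf 0)$ thanks to the hypothesis on $\varTheta_{t=0}$. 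Writing $\tilde\varTheta_t := \varTheta_t - \frac{1}{4\pi}\int_\Sigma\varTheta_t\,{\rm d}\sigma$, this amounts to $\varTheta^\dag_t = Q(t)\tilde\varTheta_t$ on $\Sigma$.

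Next I would translate the second constraint into an ODE for $Q(t)$. With $\dot Q = Q\hat{\boldsymbol\chi}$, $\boldsymbol\chi := Q^\ast\dot Q$, and using the equivariance $Qa \times Qb = Q(a\times b)$ together with $(a\times b)\times c = (a\cdot c)b - (b\cdot c)a$, a direct computation yields
$$\int_\Sigma \partial_t\varTheta^\dag_t \times \varTheta^\dag_t\,{\rm d}\sigma \;=\; Q(t)\left[\,\int_\Sigma \partial_t\tilde\varTheta_t \times \tilde\varTheta_t\,{\rm d}\sigma \;-\; J(t)\boldsymbol\chi(t)\right],$$
where
$$J(t) := \int_\Sigma\bigl(|\tilde\varTheta_t(x)|^2{\rm Id} - \tilde\varTheta_t(x)\otimes\tilde\varTheta_t(x)\bigr)\,{\rm d}\sigma$$
is the inertia tensor of the centered topological $2$-sphere $\tilde\varTheta_t(\Sigma)$. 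The constraint thus reduces to the $3\times 3$ linear system $J(t)\boldsymbol\chi(t) = \int_\Sigma\partial_t\tilde\varTheta_t\times\tilde\varTheta_t\,{\rm d}\sigma$.

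The key point is that $J(t)$ is positive definite: a vanishing direction $v$ would force (by Cauchy--Schwarz) $\tilde\varTheta_t(x)\parallel v$ for every $x\in\Sigma$, so the image $\tilde\varTheta_t(\Sigma)$ would lie on a line, which is impossible since $\varTheta_t$ is a $C^1$ diffeomorphism and $\tilde\varTheta_t(\Sigma)$ is a topological $2$-sphere. Hence
$$\boldsymbol\chi(t) = J(t)^{-1}\int_\Sigma\partial_t\tilde\varTheta_t\times\tilde\varTheta_t\,{\rm d}\sigma \in L^1([0,T],\mathbf{R}^3),$$
the $L^1$ regularity being inherited from the $W^{1,1}$ regularity of $t\mapsto\vartheta_t$. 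The Cauchy problem $\dot Q = Q\hat{\boldsymbol\chi}$, $Q(0)={\rm Id}$, then admits a unique absolutely continuous $\mathrm{SO}(3)$-valued solution on $[0,T]$, and $\mathbf{s}(t)$ is recovered explicitly; uniqueness of $(Q,\mathbf{s})$ is built in by construction.

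It remains to produce a full $\vartheta^\dag \in W^{1,1}([0,T],D^1_0(\mathbf{R}^3))$ whose associated $\varTheta^\dag_t$ matches $Q(t)\varTheta_t + \mathbf{s}(t)$ on $\Sigma$. I would use a fixed cutoff $\rho\in C^\infty_c(\mathbf{R}^3)$ equal to $1$ on a neighborhood of $\bar B$ and define
$$\varTheta^\dag_t(x) := \varTheta_t(x) + \rho(x)\bigl[(Q(t)-{\rm Id})\varTheta_t(x) + \mathbf{s}(t)\bigr],$$
which equals $Q(t)\varTheta_t(x)+\mathbf{s}(t)$ on $\Sigma$, equals $\varTheta_t(x)$ outside the support of $\rho$ (hence $\vartheta^\dag_t = \vartheta_t \in D^1_0$ at infinity), and inherits $W^{1,1}$ regularity in time from $\vartheta,Q,\mathbf{s}$. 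The main (mild) technical obstacle is to check that the interpolated map remains a $C^1$ diffeomorphism of $\mathbf{R}^3$ isotopic to the identity; this is handled either by shrinking the support of $\rho$ to a tubular neighborhood of $\Sigma$ (on which $Q\varTheta_t+\mathbf{s}$ is already a diffeomorphism) or equivalently by composing $\varTheta_t$ with a globally defined diffeomorphism obtained from an isotopy of rigid motions cut off to a compact set.
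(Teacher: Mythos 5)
Your proof mirrors the paper's essentially line for line: you recenter by $\bar{\mathbf s}(t):=\tfrac{1}{4\pi}\int_\Sigma\varTheta_t\,{\rm d}\sigma$, derive the same inertia matrix $\mathbb J(t)$, check positive definiteness (the paper uses the equivalent identity $(\mathbb J(t)v)\cdot v=\int_\Sigma\|\bar\varTheta_t\times v\|^2\,{\rm d}\sigma$), solve the same Carath\'eodory ODE $\dot Q=Q\hat{\boldsymbol\chi}$, and set $\mathbf s(t)=-Q(t)\bar{\mathbf s}(t)$. Your extension formula $\varTheta^\dag_t=(1-\rho)\varTheta_t+\rho\bigl(Q(t)\varTheta_t+\mathbf s(t)\bigr)$ is in fact exactly the map the paper obtains by integrating its Cauchy problem, whose right-hand side depends on the Lagrangian label $x$ but not on $X$, so the paper's ``flow'' is just this affine interpolation in the cutoff region. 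You improve slightly on the paper by acknowledging that a convex combination of two diffeomorphisms need not be a diffeomorphism --- a point the paper passes over silently. Be aware, however, that your first proposed fix (shrinking the support of $\rho$ to a tubular neighborhood of $\Sigma$) does not help: when $Q(t)$ is far from the identity, the correction $(Q(t)-{\rm Id})\varTheta_t+\mathbf s(t)$ stays of order one near $\Sigma$, so a thinner cutoff only inflates $\|\nabla\rho\|$ and makes degeneracy of $\nabla\varTheta^\dag_t$ more, not less, likely. Your second fix --- composing $\varTheta_t$ with the flow of a compactly supported Eulerian vector field implementing the rigid isotopy $t\mapsto(Q(t),\mathbf s(t))$ --- is sound, as flows of Lipschitz fields are automatically diffeomorphisms, and it is the construction the paper's wording most plausibly intends.
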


In other words, the proposition tells us that any function of $W^{1,1}([0,T],D^1_0(\mathbf R^3))$ satisfying the first equality of \eqref{self-propelled-cond} at $t=0$, can be made allowable (in the sense that it satisfies \eqref{self-propelled-cond}) when composed with a suitable rigid displacement on the unit sphere.
\begin{proof}
Define $\bar{\mathbf s}(t):=(1/4\pi)\int_\Sigma \varTheta_t\,{\rm d}\sigma$ (an absolutely continuous function on $[0,T]$) and $\bar\varTheta_t:=\varTheta_t-\bar{\mathbf s}(t)$ for every $t\in[0,T]$. The matrix $\mathbb J(t):=\int_\Sigma\|\bar\varTheta_t\|^2_{\mathbf R^3}{\rm Id}-\bar\varTheta_t\otimes\bar\varTheta_t{\rm d}\sigma$ is always definite positive since $(\mathbb J(t)x)\cdot x=\int_\Sigma \|\bar\varTheta_t\times x\|^2_{\mathbf R^3}{\rm d}\sigma$ for all $t\in[0,T]$ and all $x\in\mathbf R^3$. We can then define $\boldsymbol\chi(t):=\mathbb J(t)^{-1}\int_\Sigma\partial_t\bar\varTheta_t\times\bar\varTheta_t\,{\rm d}\sigma$ as a function of $L^1([0,T],\mathbf R^3)$. The absolutely continuous function $t\in[0,T]\mapsto Q(t)\in{\rm SO}(3)$ is obtained by solving the ODE $\partial_t Q(t)=Q(t)\hat{\boldsymbol\chi}(t)$ with Cauchy data $Q(0)={\rm Id}$ (we consider here a Carath\'eodory solution which is unique according to Gr\"onwall's inequality). Then, we set $\mathbf s(t):=-Q(t)\bar{\mathbf s}(t)$ for all $t\in[0,T]$. The function $\tilde\varTheta_t:=Q(t)\varTheta_t+\mathbf s(t)$ is in $W^{1,1}([0,T],C^1(\mathbf R^3)^3)$, satisfies \eqref{self-propelled-cond} but does not take its values in $D^1_0(\mathbf R^3)$ because $\tilde\varTheta_t(x)=Q(t)x+\mathbf s(t)+o(1)\neq x$ as $\|x\|_{\mathbf R^3}\to +\infty$. Let $\Omega$ and $\Omega'$ be large balls such that $\bigcup_{t\in[0,T]}\tilde\varTheta_t(\bar B)\subset\Omega$ and $\bar\Omega\subset\Omega'$ and consider a cut-off function $\xi$ valued in $[0,1]$ and such that  $\xi =1$ in $\Omega$ and $\xi=0$ in $\mathbf R^3\setminus\bar\Omega'$. To complete the proof, define $\varTheta^\dag$ as the flow associated with the Cauchy problem $\dot X(t,x)=\xi(x)\partial_t\tilde\vartheta_t(x)+(1-\xi(x))\partial_t\vartheta_t(x)$, $X(0,x)=\varTheta_{t=0}(x)$. \end{proof}
\begin{definition}
We denote by $\mathcal A$ the non-empty closed subset of $W^{1,1}([0,T],D^1_0(\mathbf R^3))$ consisting of all of the functions verifying \eqref{self-propelled-cond}.
\end{definition}
%----------------------------------------------------
\subsection{Main results}
The first result ensures the well posedness of System \eqref{main_dynamics} and the continuity of the input-output mapping:

\begin{proposition}
\label{existence}
For any $T>0$, any function $\vartheta\in W^{1,1}([0,T],D^1_0(\mathbf R^3))$ (respectively of class $C^p$, $p=1,\ldots,+\infty,\omega$) and any initial data $(R(0),\mathbf r(0))\in{\rm SO}(3)\times\mathbf R^3$, System  \eqref{main_dynamics} admits a unique solution $t\in[0,T]\mapsto (R(t),\mathbf r(t))\in{\rm SO}(3)\times\mathbf R^3$ (in the sense of Carath\'eodory) absolutely continuous on $[0,T]$ (respectively of class $C^p$).

Let $(\vartheta_j)_{j\geq 1}\subset W^{1,1}([0,T],D^1_0(\mathbf R^3))$ be a sequence of controls converging to a function $\bar\vartheta$. 
Let a pair $(R_0,\mathbf r_0)\in{\rm SO}(3)\times\mathbf R^3$ be given and denote by $t\in[0,T]\mapsto(\bar R(t),\bar{\mathbf r}(t))\in{\rm SO}(3)\times\mathbf R^3$ the solution in $AC([0,T],{\rm SO}(3)\times\mathbf R^3)$ to System \eqref{main_dynamics}  
with control $\bar\vartheta$ and Cauchy data $(R_0,\mathbf r_0)$. Then, the unique solution $(R^j,\mathbf r^j)$ to System \eqref{main_dynamics} with control $\vartheta^j$ and Cauchy data $(R_0,\mathbf r_0)$ converges in $AC([0,T],{\rm SO}(3)\times\mathbf R^3)$ to $(\bar R,\bar{\mathbf r})$ as $j\to+\infty$. 
\end{proposition}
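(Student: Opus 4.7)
}
The strategy is classical: rewrite \eqref{main_dynamics} as a Carath\'eodory ODE on the compact manifold ${\rm SO}(3)\times\mathbf R^3$ with right-hand side $F(t)=(R\hat{\boldsymbol\Omega},R\mathbf v)$ where $(\boldsymbol\Omega,\mathbf v)(t)=-\mathbb M(t)^{-1}\mathbf N(t)$, and check that $F$ has the required regularity in $t$. Since the rigid-motion block is globally Lipschitz in $(R,\mathbf r)$ and linear in $(\boldsymbol\Omega,\mathbf v)$, the entire issue reduces to the regularity of $t\mapsto \mathbb M(t)$ and $t\mapsto \mathbf N(t)$ as functions of $\vartheta$.

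The central step is therefore a \emph{domain-dependence} lemma for the elementary Stokes problems on the exterior domain $\mathcal F_t$. I would pull them back to the fixed reference exterior $\mathcal F:=\mathbf R^3\setminus \bar B$ through $\varTheta_t={\rm Id}+\vartheta_t$; because $\vartheta_t\in D^1_0(\mathbf R^3)$, the Jacobian $\nabla\varTheta_t$ is a bounded invertible perturbation of the identity and the pulled-back problems read as coercive variational problems in $W^1_0(\mathcal F)^3\times L^2(\mathcal F)$ (the weighted Sobolev setting of Section~\ref{SEC:Stokes}) with bilinear form and right-hand side depending continuously on $\vartheta_t$ (for the geometric part) and on $\partial_t\vartheta_t$ (only for the boundary datum $\mathbf w_t$ of the $(\mathbf u_d,p_d)$ problem). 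A Lax--Milgram / inf--sup argument, uniform in $\vartheta$ on bounded sets of $D^1_0(\mathbf R^3)$, then gives continuous dependence of $(\mathbf u_i,p_i)$ and of $(\mathbf u_d,p_d)$ on the data. Since $\vartheta\in W^{1,1}([0,T],D^1_0(\mathbf R^3))$ embeds into $C^0([0,T],D^1_0(\mathbf R^3))$, this yields $\mathbb M\in C^0([0,T])$, uniformly coercive (hence $\mathbb M^{-1}\in C^0([0,T])$), and $\mathbf N\in L^1([0,T])$ with $\|\mathbf N(t)\|\lesssim \|\partial_t\vartheta_t\|_{D^1_0}$ locally uniformly in $\vartheta$. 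The composed map $t\mapsto (\boldsymbol\Omega,\mathbf v)(t)$ thus lies in $L^1([0,T],\mathbf R^6)$.

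Existence and uniqueness of an absolutely continuous $(R,\mathbf r)$ then follow from the Carath\'eodory--Picard theorem applied to \eqref{complement}, with the \emph{a priori} bound $\|R\|\le 1$ ensuring global existence on $[0,T]$. The $C^p$ statement ($p=1,\dots,\infty,\omega$) is obtained by iterating the implicit-function theorem on the pulled-back Stokes problems: each differentiation of $\vartheta$ in $t$ transfers to a bounded-linear perturbation of the data, so $\mathbb M,\mathbf N\in C^p([0,T])$ and classical ODE regularity gives $(R,\mathbf r)\in C^p([0,T])$ (with the analytic case handled through the holomorphic dependence of Lax--Milgram solutions on their coefficients).

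For the continuity statement, I set $e_j:=(R^j-\bar R,\mathbf r^j-\bar{\mathbf r})$ and write
\begin{equation*}
\dot e_j(t)= \bigl(F_{\vartheta^j}(t,R^j,\mathbf r^j)-F_{\bar\vartheta}(t,\bar R,\bar{\mathbf r})\bigr)
=\bigl(F_{\bar\vartheta}(t,R^j,\mathbf r^j)-F_{\bar\vartheta}(t,\bar R,\bar{\mathbf r})\bigr)+\bigl(F_{\vartheta^j}-F_{\bar\vartheta}\bigr)(t,R^j,\mathbf r^j).
\end{equation*}
The first bracket is bounded by $C(t)\|e_j\|$ with $C\in L^1([0,T])$, so Gr\"onwall's lemma reduces the task to showing that the $L^1$-norm of the second bracket tends to $0$. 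This in turn reduces to $\|\mathbb M_{\vartheta^j}-\mathbb M_{\bar\vartheta}\|_{C^0}\to 0$ and $\|\mathbf N_{\vartheta^j}-\mathbf N_{\bar\vartheta}\|_{L^1}\to 0$, which are direct consequences of the domain-dependence lemma above combined with the $W^{1,1}$-convergence of the controls. The main obstacle, as anticipated, is the first step: establishing uniform well-posedness and continuous dependence of the elementary exterior-Stokes solutions on the diffeomorphism $\varTheta_t$ in the weighted function spaces, all the rest being standard ODE bookkeeping.
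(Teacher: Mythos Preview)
Your proposal is correct and follows essentially the same route as the paper. The ``domain-dependence lemma'' you identify as the main obstacle is precisely what the paper establishes as Lemma~\ref{lemma:import}: pull the elementary Stokes problems back to the fixed exterior $F=\mathbf R^3\setminus\bar B$ via $\varTheta_t$, then obtain smooth (in fact analytic) dependence of the solutions on $(\vartheta_t,\mathbf W)$ by applying the implicit function theorem to the transformed variational system (your Lax--Milgram/inf--sup framing is a continuous-dependence version of the same argument). With $\varPhi(\mathbf q)$ analytic, the paper reads off $\mathbb M\in AC([0,T])$ and $\mathbf N\in L^1([0,T])$ exactly as you do, and the remaining ODE bookkeeping---Carath\'eodory existence, Gr\"onwall uniqueness, and the Gr\"onwall-based stability splitting---is identical.
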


We denote by ${\rm M}(3)$ the Banach space of the $3\times 3$ matrices endowed with any matrix norm. The main result of this article addresses the controllability of System \eqref{main_dynamics}:
\begin{theorem}(Synchronized Swimming)
\label{main_theorem_cont}
Assume that the following data are given: (i) A  function $\bar\vartheta\in \mathcal A$ (the reference shape changes);
(ii) A continuous function $t\in[0,T]\mapsto (\bar R(t),\bar{\mathbf r}(t))\in{\rm SO}(3)\times\mathbf R^3$ (the reference trajectory to be followed). 
Then, for any $\varepsilon>0$, there exists a function $t\in[0,T]\mapsto \vartheta_t\in D^1_0(\mathbf R^3)$ (the actual shape changes) in $\mathcal A$, which can be chosen analytic, such that $\vartheta_0=\bar\vartheta_0$, $\vartheta_T=\bar\vartheta_T$ and $\sup_{t\in [0,T]}\Big(\|\bar\vartheta_{t}-\vartheta_t\|_{C^1_0(\mathbf R^3)^3}+\|\bar R(t)-R(t)\|_{{\rm M}(3)}+\|\bar{\mathbf r}(t)-\mathbf r(t)\|_{\mathbf R^3}\big)<\varepsilon$
where the function $t\in[0,T]\mapsto (R(t),\mathbf r(t))\in{\rm SO}(3)\times\mathbf R^3$ is the unique solution to system \eqref{main_dynamics} with initial data $(R(0),\mathbf r(0))=(\bar R(0),\bar{\mathbf r}(0))$ and control $\vartheta$.
\end{theorem}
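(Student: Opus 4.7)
The plan is to view \eqref{main_dynamics} as a driftless control–affine system on the six-dimensional manifold $\mathrm{SO}(3)\times\mathbf{R}^3$ whose control lives in the infinite-dimensional space $D^1_0(\mathbf{R}^3)$. Because the Stokes problem is linear in the boundary velocity $\mathbf{w}_t=\partial_t\vartheta_t\circ\varTheta_t^{-1}$, the vector $\mathbf{N}(t)$ is linear in $\partial_t\vartheta_t$ once $\vartheta_t$ is frozen, and \eqref{main_dynamics} takes the symbolic form $\tfrac{d}{dt}(R,\mathbf{r})=F(R,\mathbf{r},\vartheta_t)\cdot\dot\vartheta_t$; in particular, freezing $\vartheta$ produces no motion, so the system is genuinely driftless and the tracking question is of geometric–phase type. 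The strategy is the classical time-slicing/Lie-bracket one used in \textit{chamb\_munnier\_arxiv}: partition $[0,T]$ into many short intervals, produce on each interval a small-amplitude corrective perturbation of $\bar\vartheta$ whose net effect realises the rigid displacement prescribed by the reference trajectory, then concatenate and smooth.

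First I would subdivide $[0,T]$ into $N$ pieces $[t_k,t_{k+1}]$ of mesh $\delta=T/N$ and look for a control of the form $\vartheta_t=\bar\vartheta_t+\eta^{(k)}_t$ on each piece, with $\eta^{(k)}$ small in $C^1_0(\mathbf{R}^3)^3$ and $\eta^{(k)}(t_k)=\eta^{(k)}(t_{k+1})=0$, so that the concatenation is continuous, matches $\bar\vartheta_0$ and $\bar\vartheta_T$ at the endpoints, and can subsequently be re-projected onto $\mathcal{A}$ via Proposition~\ref{PROP:1_1} (producing a rigid correction that we absorb in the trajectory comparison). On each slice, I need to realise a displacement of order $\delta$ in $\mathrm{SO}(3)\times\mathbf{R}^3$ using $\eta^{(k)}$ of arbitrarily small $C^1_0$-norm; the continuity of the input–output map asserted in Proposition~\ref{existence} will ensure that if I can achieve this locally to within $o(\delta)$ error on each piece, the global trajectory converges uniformly to $(\bar R,\bar{\mathbf{r}})$ as $N\to\infty$ and the perturbations shrink in amplitude.

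The key technical step, and the one I expect to be the main obstacle, is small-time local controllability at $\bar\vartheta_{t_k}$ with arbitrarily small shape perturbations. The direct linear response map $\eta\mapsto\int_{t_k}^{t_{k+1}}F(\bar\vartheta_t)\dot\eta\,dt$ may not be surjective onto $\mathfrak{so}(3)\times\mathbf{R}^3$, so I would rely on the standard oscillatory Lie-bracket trick: for a pair $(\eta_1,\eta_2)$ of admissible deformations, the control $\varepsilon(\cos(\omega t)\eta_1+\sin(\omega t)\eta_2)$ produces, after averaging, a net displacement of order $\varepsilon^2$ along the Lie bracket $[F(\bar\vartheta_t)\eta_1,F(\bar\vartheta_t)\eta_2]$, and combining linear responses and such second-order brackets for a finite family $\eta_1,\dots,\eta_m\in D^1_0(\mathbf{R}^3)$ should span the full tangent space. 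The bracket-generating property itself rests on a non-degeneracy of the Stokes input–output map for sufficiently rich families of boundary data; exploiting the infinite dimensionality of $D^1_0(\mathbf{R}^3)$, together with the analytic dependence of $(\mathbb{M},\mathbf{N})$ on the shape, one ought to rule out that all admissible perturbations vanish against some non-trivial $(\omega,\mathbf{u})\in\mathfrak{so}(3)\times\mathbf{R}^3$ up to arbitrary order.

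Finally I would assemble the pieces: concatenate the $\eta^{(k)}$'s, re-impose \eqref{self-propelled-cond} via Proposition~\ref{PROP:1_1} (the induced rigid correction is itself $O(\varepsilon)$ and can be folded into the final trajectory estimate), and pass from the resulting $W^{1,1}$ control to an analytic one by density (e.g.\ truncated Fourier approximation on $[0,T]$ combined with polynomial-type approximation in the spatial variable in $C^1_0(\mathbf{R}^3)^3$), appealing once more to the continuity statement in Proposition~\ref{existence}. The prescribed endpoint values $\vartheta_0=\bar\vartheta_0$ and $\vartheta_T=\bar\vartheta_T$ are retained because the corrective perturbations vanish at both endpoints, and the self-propelled constraints are preserved in the smoothing step since they are closed under uniform $W^{1,1}$-convergence and can be re-enforced locally without destroying the small-amplitude bound.
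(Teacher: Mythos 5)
Your outline follows roughly the same high-level scaffolding as the paper's proof (time-slicing of $[0,T]$, small corrective perturbations, Lie-bracket argument for local controllability, re-imposition of \eqref{self-propelled-cond}, then smoothing to analyticity), but it has a genuine gap at exactly the point you flag as ``the main obstacle,'' and the gap is not a technicality: it is the mathematical substance of the theorem.

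You write that ``the bracket-generating property itself rests on a non-degeneracy of the Stokes input--output map for sufficiently rich families of boundary data; \ldots one ought to rule out that all admissible perturbations vanish against some non-trivial $(\omega,\mathbf u)$ up to arbitrary order.'' That sentence is a restatement of what has to be proved, not a proof of it. Nothing in your sketch explains why the infinite-dimensionality of $D^1_0(\mathbf R^3)$, or the analyticity of $(\mathbb M,\mathbf N)$ in the shape, would actually produce a full-rank family of Lie brackets at a generic shape. The paper closes this gap in a very concrete way: it augments the state with a finite-dimensional shape parameter $s\in\mathbf R^4$, restates the dynamics as the driftless system \eqref{dynamics:3} on the 10-dimensional manifold ${\rm SO}(3)\times\mathbf R^3\times\mathcal S(c)$, and then verifies bracket-generating by an explicit computation at a single reference point: it takes the unit ball and the four specific spherical-harmonic deformations \eqref{def:vectors}, uses Lamb's solution \eqref{brenner_form} and Lemma~\ref{lemma:DMN} to compute $\partial_{s_i}\mathbb N$ explicitly, and checks that the span of first-order brackets has full rank. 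Analyticity of $\mathbf c\mapsto\mathbb M(\mathbf c),\mathbb N(\mathbf c)$ (Theorem~\ref{theorem:2}) plus connectedness of $\mathcal C(n)$ (Theorem~\ref{theorem:1}) then propagate this single example to an open dense set of swimmer signatures (Proposition~\ref{everything_is_controllable}). Without some such verified example, your argument could equally well be made for a hypothetical model in which the bracket tower degenerates, so the proof does not go through.

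A secondary but related issue is structural. By keeping the control infinite-dimensional and trying to work directly on the 6-dimensional manifold ${\rm SO}(3)\times\mathbf R^3$, you lose the clean driftless control-affine structure in a \emph{finite} number of controls that Rashevsky--Chow and the Orbit Theorem require. The paper's swimmer-signature device (fix $n=4$ elementary deformations satisfying the algebraic constraints of Definition~\ref{def:sc}, so that \eqref{self-propelled-cond} holds automatically along $\vartheta_t=\vartheta+\sum_i s_i(t)\mathbf V_i$, and carry $s$ as a state) is precisely what makes the geometric control machinery applicable and lets Proposition~\ref{prop:prop12} deliver the $C^1$ controls and the small-amplitude tracking estimates. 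Your plan of ``re-projecting onto $\mathcal A$ via Proposition~\ref{PROP:1_1} afterwards'' is more delicate than you acknowledge: Proposition~\ref{PROP:1_1} composes the shape with a time-dependent rigid displacement $(Q(t),\mathbf s(t))$, and by the rigid-invariance result preceding \eqref{self-propelled-cond} this rigid correction \emph{cancels} part of the net displacement you were trying to produce. The paper avoids this feedback by enforcing the constraints structurally through the swimmer signature, and only invokes Proposition~\ref{PROP:1_1} in the ``jerking'' step where the displacement incurred is separately controlled to be $O(\varepsilon)$. In your plan the cancellation would have to be estimated on every slice and it is not clear it stays subordinate to the $O(\delta)$ displacement you need.

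In short: the skeleton (slice, perturb locally, bracket, smooth) is the right one, and the appeals to Propositions~\ref{existence}, \ref{PROP:1_1} and \ref{PROPo} are appropriate, but the central claim — that small shape perturbations bracket-generate the full rigid-motion tangent space — is asserted rather than proved, and that is precisely what Sections~\ref{section:SS}--\ref{SEC:control_problem} of the paper exist to establish.
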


This theorem tells us that any 3D microswimmer undergoing approximately any prescribed shape changes can approximately track by swimming any given trajectory.  It may seem surprising that the shape changes, which are supposed to be the control of our problem, can also be somehow preassigned. Actually, the trick is that they can only be {\it approximately} prescribed. We are going to show that arbitrarily small superimposed shape changes suffice for controlling the swimming motion. 

When no macro shape changes are preassigned we have:
\begin{theorem}(Freestyle Swimming)
\label{main_theorem_free}
Assume that the following data are given: (i) A  function $\bar\vartheta\in D^1_0(\mathbf R^3)$ such that $\int_\Sigma\bar\varTheta\,{\rm d}\sigma =0$ (the reference shape at rest) (ii) A continuous function $t\in[0,T]\mapsto (\bar R(t),\bar{\mathbf r}(t))\in{\rm SO}(3)\times\mathbf R^3$ (the reference trajectory). 
Then, for any $\varepsilon>0$ there exists a function $\vartheta \in D^1_0(\mathbf R^3)$ (the actual shape at rest) such that (i) $\int_\Sigma \varTheta\,{\rm d}\sigma=\mathbf 0$ (ii) $\|\bar\vartheta-\vartheta\|_{D^1_0(\mathbf R^3)}<\varepsilon$ and (iii) for almost any $4$-uplet $(\mathbf V_1,\ldots,\mathbf V_4)\in (C^1_0(\mathbf R^3)^3)^4$ satisfying $\int_\Sigma \mathbf V_i\,{\rm d}x=\mathbf 0$, $\int_\Sigma \varTheta\times\mathbf V_i\,{\rm d}\sigma=\mathbf 0$ and $\int_\Sigma \mathbf V_i\times\mathbf V_j\,{\rm d}\sigma=\mathbf 0$ ($i,j=1,\ldots,4$), there exists a function  $t\in[0,T]\mapsto s(t):=(s_1(t),\ldots,s_4(t))^\ast\in\mathbf R^4$ (which can be chosen analytic) such that, using  $\vartheta_t:=\vartheta+\sum_{i=1}^4s_i(t)\mathbf V_i\in D^1_0(\mathbf R^3)$ as control in the dynamics \eqref{main_dynamics}, we get $\sup_{t\in [0,T]}\big(\|\bar R(t)-R(t)\|_{{\rm M}(3)}+\|\bar{\mathbf r}(t)-\mathbf r(t)\|_{\mathbf R^3}\big)<\varepsilon$
where the function $t\in[0,T]\mapsto (R(t),\mathbf r(t))\in{\rm SO}(3)\times\mathbf R^3$ is the unique solution to ODEs \eqref{main_dynamics} with initial data $(R(0),\mathbf r(0))=(\bar R(0),\bar{\mathbf r}(0))$.
\end{theorem}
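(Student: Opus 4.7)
With the ansatz $\vartheta_t := \vartheta + \sum_{i=1}^{4} s_i(t)\mathbf V_i$, the Eulerian deformation velocity reads $\mathbf w_t = \sum_i \dot s_i(t)\,\mathbf V_i\circ\varTheta_t^{-1}$. A short computation using the three integral conditions imposed on the $\mathbf V_i$, together with $\int_\Sigma\varTheta\,{\rm d}\sigma = 0$, shows that $\vartheta_t\in\mathcal A$ for every absolutely continuous $s:[0,T]\to\mathbf R^4$: the first condition handles linearity in $s$ of $\int_\Sigma\varTheta_t\,{\rm d}\sigma$, while the second and third kill respectively the linear and quadratic parts of $\int_\Sigma\partial_t\varTheta_t\times\varTheta_t\,{\rm d}\sigma$. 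Plugging into \eqref{main_dynamics}, the dynamics becomes a \emph{driftless} control-affine system
$$\frac{d}{dt}(R,\mathbf r,s) \;=\; \sum_{i=1}^{4}\dot s_i(t)\,Y_i(R,\mathbf r,s)$$
on the analytic manifold $M:=\mathrm{SO}(3)\times\mathbf R^3\times\mathbf R^4$. The vector fields $Y_i$ depend analytically both on the state and on the data $(\vartheta,\mathbf V_1,\ldots,\mathbf V_4)$, thanks to the analytic dependence of the elementary Stokes solutions on the domain perturbation $\varTheta_t$ (the same regularity that underlies Proposition~\ref{existence}).

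\textbf{Control-theoretic skeleton.} I would then invoke Chow--Rashevsky, together with the analytic refinement of Krener--Sussmann: to approximately track any continuous curve $(\bar R,\bar{\mathbf r})$ on $[0,T]$ by an analytic control $s(\cdot)$, it suffices that at one point $q_\star\in M$ the projection of $\mathrm{Lie}(Y_1,\ldots,Y_4)(q_\star)$ onto $T_{(R,\mathbf r)}(\mathrm{SO}(3)\times\mathbf R^3)$ has rank $6$ (the $s$ component is free and need not be prescribed). Tracking is then obtained by concatenating short pieces of flows along iterated brackets and smoothing into an analytic $s(t)$, exactly as in the finite-dimensional theory of driftless analytic systems.

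\textbf{Main obstacle: generic Lie-algebra rank condition.} This is the technical core. Having four vector fields yields $\binom{4}{2}=6$ first-order brackets $[Y_i,Y_j]$, producing $10$ candidate tangent vectors --- comfortably enough to span a $6$-dimensional space. For each choice of $6$ of these vectors, the corresponding $6\times 6$ determinant, evaluated at a fixed state $q_\star$, is an analytic function on the affine Banach subspace $\mathcal V\subset(C^1_0(\mathbf R^3)^3)^4$ carved out by the three linear constraints on the $\mathbf V_i$. The plan is the standard analytic dichotomy: either this determinant vanishes identically on $\mathcal V$ (in which case the zero set is all of $\mathcal V$), or its zero set is a proper analytic subvariety, hence Aronszajn-null and in particular negligible in any reasonable sense of ``almost every'' on an infinite-dimensional space. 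To rule out the first alternative I would explicitly build one witness $(\vartheta;\mathbf V_1,\ldots,\mathbf V_4)$ satisfying LARC at some $q_\star$, for instance by taking $\mathbf V_i$ supported in disjoint patches of $\Sigma$ so that the four primary fields and their brackets decouple into identifiable infinitesimal translations and rotations of the body frame; this step is delicate because the constraints $\int_\Sigma\mathbf V_i\,{\rm d}\sigma=0$, $\int_\Sigma\varTheta\times\mathbf V_i\,{\rm d}\sigma=0$ and $\int_\Sigma\mathbf V_i\times\mathbf V_j\,{\rm d}\sigma=0$ must be enforced simultaneously, and is where I expect the main effort to go.

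\textbf{Conclusion.} Producing a reference shape $\vartheta$ near $\bar\vartheta$ with $\int_\Sigma\varTheta\,{\rm d}\sigma=\mathbf 0$ is harmless (project $\bar\vartheta$ onto the affine hyperplane of mean-free diffeomorphisms), and the LARC, being an open condition on $(\vartheta,\mathbf V_1,\ldots,\mathbf V_4)$, survives this perturbation. Combining the preceding three steps, one obtains the analytic control $s(t)$ tracking $(\bar R,\bar{\mathbf r})$ within $\varepsilon$, which is the statement of Theorem~\ref{main_theorem_free}.
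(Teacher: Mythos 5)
Your overall architecture is right and matches the paper's in outline: reduce to a driftless affine system on $\mathrm{SO}(3)\times\mathbf R^3\times\mathbf R^4$, verify the self-propelled constraints follow from the three integral conditions, invoke Chow--Rashevsky and the Orbit Theorem, use analyticity of the determinants of bracket families to get genericity, and smooth the control. But you have left the actual core of the proof unproved, and you say so yourself: the construction of a single witness $(\vartheta;\mathbf V_1,\ldots,\mathbf V_4)$ satisfying LARC, needed to show the determinant does not vanish identically, is precisely ``where I expect the main effort to go.'' Without it the analytic dichotomy has no teeth: you cannot exclude the alternative that the determinant vanishes on all of $\mathcal C(4)$.

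Moreover, the heuristic you sketch for building the witness --- velocity fields $\mathbf V_i$ supported in disjoint boundary patches, expecting the brackets to ``decouple into identifiable infinitesimal translations and rotations of the body frame'' --- is unlikely to succeed. The relevant quantities are $\partial_{s_i}\mathbb N\,\mathbf f_j-\partial_{s_j}\mathbb N\,\mathbf f_i$, i.e.\ shape derivatives of forces and torques obtained by solving an exterior Stokes problem; these are genuinely nonlocal and mix all the patches through the fluid, so there is no decoupling. In addition, by the scallop theorem the first-order force from a single patch vanishes once $\int_\Sigma\mathbf V_i\,{\rm d}\sigma=0$ and $\int_\Sigma\varTheta\times\mathbf V_i\,{\rm d}\sigma=0$; the net motion lives entirely in the cross terms between distinct $\mathbf V_i$ and $\mathbf V_j$, exactly where your decoupling intuition fails. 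The paper avoids all of this by taking $\vartheta={\rm Id}$ (the unit ball) and $\mathbf V_i=V_i\,\mathbf e_\varrho$ built from spherical harmonics $Y_{3,1},Y_{3,2},Y_{4,2}$, so that Lamb's expansion \eqref{brenner_form}, Lemma~\ref{lemma:FT} and Lemma~\ref{lemma:DMN} make the matrices $\partial_{s_i}\mathbb N(c,0)$ explicitly computable, after which one checks by hand that $\{\partial_{s_i}\mathbb N(\mathbf c)\mathbf f_j-\partial_{s_j}\mathbb N(\mathbf c)\mathbf f_i\}$ spans $\mathbf R^6$. That explicit calculation is the missing step. A smaller point: the theorem already assumes $\int_\Sigma\bar\varTheta\,{\rm d}\sigma=\mathbf 0$, so the ``projection onto mean-free diffeomorphisms'' you mention is not where the perturbation of $\vartheta$ is needed; the perturbation is needed because genericity in $\mathcal V$ for a \emph{fixed} $\vartheta$ (Proposition~\ref{properties_of_SS}, item~3) first requires the section $\varPi^{-1}(\{\vartheta\})$ to contain at least one controllable signature, which is only guaranteed after moving $\vartheta$ slightly.
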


We claim in this Theorem that any 3D microswimmer (maybe up to an arbitrarily small modification of its initial shape) is able to swim by means of allowable deformations (i.e. satisfying the constraints \eqref{self-propelled-cond}) obtained as a suitable combination of  almost any given four basic movements. 

If we still seek the control function $\vartheta_t$ as a combination of a finite number of elementary deformations, i.e. in the form
\begin{equation}
\label{finite_dim}
\vartheta_t=\vartheta+\sum_{i=1}^ns_i(t)\mathbf V_i,
\end{equation}
 where $t\in[0,T]\mapsto s(t):=(s_1(t),\ldots,s_n(t))^\ast\in\mathbf R^n$ is in $L^1([0,T],\mathbf R^n)$, $\int_\Sigma{\varTheta}\,{\rm d}x=\mathbf 0$ and $(\mathbf V_1,\ldots,\mathbf V_n)\in(C^1_0(\mathbf R^3)^3)^n$ is a fixed family of $n$ vector fields satisfying $\int_\Sigma\mathbf V_i\,{\rm d}x=\mathbf 0$, $\int_\Sigma{\varTheta}\times\mathbf V_i\,{\rm d}x=\mathbf 0$ and $\int_\Sigma\mathbf V_i\times\mathbf V_j\,{\rm d}x=\mathbf 0$ ($i,j=1,\ldots,n$) we can state the following result:
\begin{theorem}(Existence of an optimal control)
\label{theo:optimal}
Let $f:{\rm SO}(3)\times\mathbf R^3\times D^1_0(\mathbf R^3)\times (C^1_0(\mathbf R^3)^3)\to \mathbf R$ be a continuous function, convex in the third variable and let $\mathcal K$ be a compact of $\mathbf R^n$. Let $(R_0,\mathbf r_0,\vartheta_0)$ and $(R_1,\mathbf r_1,\vartheta_1)$ be two elements of ${\rm SO}(3)\times\mathbf R^3\times D^1_0(\mathbf R^3)$ such that there exists a control function $\vartheta_t$ (i) having the form \eqref{finite_dim} with $s(t)\in\mathcal K$ for a.e. $t\in[0,T]$, (ii) satisfying $\vartheta_{t=0}=\vartheta_0$, $\vartheta_{t=T}=\vartheta_1$ and (iii) steering  the dynamics \eqref{main_dynamics} from $(R_0,\mathbf r_0)$ (at $t=0$) to $(R_1,\mathbf r_1)$ (at $t=T$). Then, among all of the control functions satisfying (i-iii), there exists an optimal control $\vartheta^\star_t$ realizing the minimum of the cost
$$\int_0^T f(R(t),\mathbf r(t),\vartheta_t,\partial_t\vartheta_t)\,{\rm d}t.$$
\end{theorem}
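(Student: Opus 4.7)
The strategy is the direct method of the calculus of variations. Set $J(\vartheta) := \int_0^T f(R_t,\mathbf r_t,\vartheta_t,\partial_t\vartheta_t)\,\mathrm dt$ and let $J^\star$ be its infimum over the set $\mathcal S$ of admissible controls, i.e. those of the form \eqref{finite_dim} with $s(t)\in\mathcal K$ a.e., satisfying the boundary conditions (ii) and the steering condition (iii). By assumption $\mathcal S\neq\emptyset$, so $J^\star<+\infty$; pick a minimizing sequence $(\vartheta^k)_{k\ge 1}\subset \mathcal S$ and decompose $\vartheta_t^k = \vartheta + \sum_{i=1}^n s_i^k(t)\mathbf V_i$.

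The next step is to extract a convergent subsequence of trajectories. By compactness of $\mathcal K$, the sequence $(s^k)$ is bounded in $L^\infty([0,T],\mathbf R^n)$, so Banach--Alaoglu delivers, up to extraction, $s^k\stackrel{\ast}{\rightharpoonup}\bar s$ in $L^\infty$. The structural observation is that the right-hand side of \eqref{dynamics} depends linearly on the control through $\partial_t \vartheta_t = \sum_i \dot s_i^k(t)\mathbf V_i$: expanding, one has $(\boldsymbol\Omega^k,\mathbf v^k)^\ast = \sum_i \dot s_i^k(t)\mathbf F_i(s^k(t))$ with the $\mathbf F_i$ continuous on the compact $\mathcal K$. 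Coupled with equi-integrability of $(\dot s^k)$ and Dunford--Pettis, this yields $\dot s^k \rightharpoonup \dot{\bar s}$ in $L^1$, equicontinuity of $(R^k,\mathbf r^k)$, and via Arzel\`a--Ascoli together with the continuity of the input--output map (Proposition~\ref{existence}) uniform convergence of $(R^k,\mathbf r^k)$ to the state $(\bar R,\bar{\mathbf r})$ associated with $\bar\vartheta_t := \vartheta + \sum_i \bar s_i(t)\mathbf V_i$ by \eqref{main_dynamics}. The pointwise constraint $\bar s(t)\in\mathcal K$ follows from Mazur's lemma (arguing on the closed convex hull of $\mathcal K$ if $\mathcal K$ is not itself convex), and the boundary and endpoint conditions $\vartheta^k_0=\vartheta_0$, $\vartheta^k_T=\vartheta_1$, $(R^k(T),\mathbf r^k(T))=(R_1,\mathbf r_1)$ pass to the limit, so that $\bar\vartheta\in\mathcal S$.

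It then remains to establish lower semicontinuity of the cost along the extracted sequence: using the uniform convergence of $(R^k,\mathbf r^k)$, the weak-$\ast$ convergence of $s^k$ and the (weak) convergence of $\dot s^k$ in $L^1$, together with continuity of $f$ in its first, second and fourth arguments and convexity in the third, a classical Ioffe--Tonelli lower-semicontinuity theorem yields $J(\bar\vartheta)\le\liminf_k J(\vartheta^k)=J^\star$, so $\bar\vartheta$ is optimal. The main obstacle is the equi-integrability of $(\dot s^k)$: the hypotheses do not impose any coercivity of $f$ in $\partial_t\vartheta$, so this property cannot be read directly from a bound on the cost, and one must exploit the driftless and reparametrization-invariant structure of \eqref{dynamics} to normalize the minimizing sequence (e.g.\ by a canonical reparametrization inside $\mathcal K$ keeping $|\dot s^k|$ essentially bounded) before applying the compactness arguments above.
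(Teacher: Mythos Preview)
The paper does not actually give a proof: it simply declares the result ``a straightforward consequence of Filippov's Theorem'' (Agrachev--Sachkov, Chap.~10). Your direct-method argument is in the right spirit --- it is essentially what Filippov's proof does under the hood --- but, as written, two steps do not close.

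The first is the one you flag yourself. Nothing in the hypotheses bounds $\dot s^k$, and the reparametrization remedy you sketch cannot repair this: the endpoints and the interval $[0,T]$ are fixed, and the cost depends on $\partial_t\vartheta_t$, so a time-change $t=\phi(\tau)$ sends $J$ to $\int_0^T f\!\bigl(R,\mathbf r,\vartheta,\partial_\tau\vartheta/\phi'\bigr)\,\phi'\,\mathrm d\tau$, which equals $J$ only if $f$ is positively $1$-homogeneous in its last argument --- not assumed. Hence the renormalized sequence need not remain minimizing. (The paper's conclusion section makes clear that the intended constraint is on the \emph{rate} of shape change, i.e.\ $\dot s(t)\in\mathcal K$ a.e., not $s(t)\in\mathcal K$; under that reading $(\dot s^k)$ is bounded in $L^\infty$, the compactness step is immediate, and this is exactly what makes Filippov applicable.)

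The second gap is the lower-semicontinuity step. Ioffe--Tonelli type theorems require convexity of the integrand in the \emph{velocity} variable, here the fourth argument $\partial_t\vartheta_t$; convexity in the third argument $\vartheta_t$ is convexity in a state variable and does not yield weak lower semicontinuity of $J$. Either the statement intends convexity in the fourth slot (which is what one needs both for your argument and for the convexity of the extended velocity set in Filippov), or a different mechanism must be invoked. As it stands, the LSC inequality is unjustified.
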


The proofs of  these results rely on the following leading ideas: First, we shall identify a set of parameters necessary to thoroughly characterize a swimmer and its way of swimming (these parameters are its shape and a finite number of basic movements, satisfying the constraints \eqref{self-propelled-cond}). Any set of such parameters will be termed a {\it swimmer signature} (denoted SS in short). Then, the set of all of the SS will be shown to be an (infinite dimensional) analytic connected  embedded submanifold of a  Banach space. 

The second step of the reasoning will consist in proving that the swimmer's ability to track any given trajectory (while undergoing approximately any preassigned shape changes) is related to the vanishing of some analytic functions depending on the SS. These functions are connected to the determinant of some vector fields and their Lie brackets (we will invoke classical results of Geometric Control Theory). Eventually, by direct calculation, we will prove that at least one swimmer (corresponding to one particular SS) has this ability. An elementary property of analytic functions will eventually allow us to conclude that almost any SS (or equivalently any microswimmer) has this property.

Eventually, the existence of an optimal control in Theorem~\ref{theo:optimal} is a straightforward consequence of Filippov Theorem (see \cite[Chap. 10]{Agrachev:2004aa})

\subsection{Outline of the paper}
The next Section is dedicated to the notion of swimmer signature (definition and properties). In Section~\ref{sensiti:mass} we show that the matrix $\mathbb M(t)$ and the vector $\mathbf N(t)$ (in \eqref{dynamics}) are analytic functions in the SS (swimmer signature, seen as a variable) and in Section~\ref{SEC:control_problem} we will restate the control problem in order to fit with the general framework of Geometric Control Theory. In the same Section, a particular case of swimmer will be shown to be controllable. In Section~\ref{sec:main_results} the proof of the main results will be carried out. Section~\ref{sec:conclusion} contains some words of conclusion. Technical results and definitions are gathered in the appendix in order to make the paper more readable.  
%================================================
\section{Swimmer Signature}
\label{section:SS}
A {\it swimmer signature} is a set of parameters characterizing swimmers whose deformations consist in a combination of a finite number of basic movements. 
\begin{definition}
\label{def:sc}
For any positive integer $n$, we denote $\mathcal C(n)$ the subset of $D^1_0(\mathbf R^3)\times (C^1_0(\mathbf R^3)^3)^n$ consisting of all of the pairs $c:=(\vartheta,\mathcal V)$ such that, 
denoting $\varTheta:={\rm Id}+\vartheta$ and $\mathcal V:=(\mathbf V_1,\ldots,\mathbf V_n)$, the following conditions hold (i) the set $\{\mathbf V_i|_{\Sigma}\cdot\mathbf e_k,\,1\leq i\leq n,\,k=1,2,3\}$ is a free family in $C^1(\Sigma)$ (ii) every pair $(\mathbf V,\mathbf V')$ of elements of $\{\varTheta,\mathbf V_1,\ldots,\mathbf V_n\}$ satisfies
$\int_{\Sigma}\mathbf V\,{\rm d}x=\mathbf 0$ and $\int_{\Sigma}\mathbf V\times \mathbf V'\,{\rm d}x=\mathbf 0$.

We call swimmer signature (SS in short) any element $c$ of $\mathcal C(n)$.
\end{definition}

By definition, $D^1_0(\mathbf R^3)$ is open in $C^1_0(\mathbf R^3)^3$ (see appendix, Section~\ref{SEC:diffeo}). We deduce that for any $c\in\mathcal C(n)$, the set $\{s:=(s_1,\ldots,s_n)^\ast\in\mathbf R^n\,:\,\vartheta+\sum_{i=1}^n s_i\mathbf V_i\in D^1_0(\mathbf R^3)\}$ is open as well in $\mathbf R^n$ and we denote $\mathcal S(c)$ its connected component containing $s=0$.
\begin{definition}
For any positive integer $n$, we call {\it swimmer full signature} (SFS in short) any pair $\mathbf c:=(c,s)$ such that $c\in\mathcal C(n)$ and $s\in\mathcal S(c)$. We denote $\mathcal C_F(n)$ the set of all of these pairs.
\end{definition}
%===================================
\subsubsection*{Restatement of the problem in terms of swimmer signature (SS) and swimmer full signature (SFS)}
Pick a SS, $c=(\mathcal\vartheta,\mathcal V)\in \mathcal C(n)$ with $\mathcal V:=(\mathbf V_1,\ldots,\mathbf V_n)$ (for some integer $n$). Denote $\varTheta:={\rm Id}+\vartheta$ and for all $s\in\mathcal S(c)$, $\varTheta_s:={\rm Id}+\vartheta+\sum_{i=1}^n s_i\mathbf V_i$ ($\mathbf c:=(c,s)\in \mathcal C(n)$ is hence a SFS). The body of the swimmer occupies the domain $\bar{\mathcal B} :=\varTheta(\bar B)$ at rest and $\bar{\mathcal B}_{\mathbf c}:=\varTheta_s(\bar B)$ (for any $s\in\mathcal S(c)$) when swimming.
%\begin{equation}
%\label{deformations}
%\varTheta_s:={\rm Id}+\vartheta+\sum_{i=1}^n s_i\mathbf V_i.
%\end{equation}
Notice that within this construction, the shape changes on a time interval $[0,T]$ ($T>0$) are merely given through an absolutely continuous  function $t:[0,T]\mapsto s(t)\in\mathcal S(c)$. 
If $t\in[0,T]\mapsto \dot s(t)\in\mathbf R^n$ stands for its time derivative in $L^1([0,T],\mathbf R^n)$, the Lagrangian velocity at a point $x$ of $\bar B$ is $\sum_{i=1}^n\dot s_i(t) \mathbf V_i(x)$ while the Eulerian velocity at a point $x\in\bar{\mathcal B}_{\mathbf c}$ is $\sum_{i=1}^n\dot s_i(t) \mathbf w^i_s(x)$ with $\mathbf w^i_s(x):=\mathbf V_i(\varTheta_s^{-1}(x))$. Due to assumption~(ii) of Definition~\ref{def:sc}, the constraints \eqref{self-propelled-cond} are automatically satisfied.

The elementary fluid velocities and elementary pressure functions corresponding to the rigid motions depend only on the SFS. Therefore, they will be denoted in the sequel $\mathbf u_i(\mathbf c)$ and $p_i(\mathbf c)$ to emphasize this dependence. The same remark holds for the matrix $\mathbb M(t)$ whose notation is turned into $\mathbb M(\mathbf c)$. The elementary velocity and pressure $(\mathbf u_d, p_d)$ connected to the shape changes can be decomposed into $\mathbf u_d=\sum_{i=1}^n\dot s_i\mathbf w_i({\mathbf c})$ and $p_d=\sum_{i=1}^n\dot s_i \pi_i({\mathbf c})$ respectively. In this sum, each pair $(\mathbf w_i(\mathbf c),\pi_i(\mathbf c))$ solves the Stokes equations in $\mathcal F_{\mathbf c}:=\mathbf R^3\setminus \bar{\mathcal B}_{\mathbf c}$ with boundary conditions $\mathbf w_i(\mathbf c)=\mathbf w^i_s$ on $\Sigma_{\mathbf c}:=\partial {\mathcal B}_{\mathbf c}$.

Introducing the matrix $\mathbb N(\mathbf c)$, whose elements are 
$$N_{ij}(\mathbf c):=\begin{cases}\int_{\Sigma_{\mathbf c}}(x\times \mathbf
e_i)\cdot\mathbb T(\mathbf w_j(\mathbf c),\pi_j(\mathbf c))\mathbf n{\rm d}\sigma&(1\leq i\leq 3,\,1\leq j\leq n);\\
\int_{\Sigma_{\mathbf c}}\mathbf
e_{i-3}\cdot\mathbb T(\mathbf w_j(\mathbf c),\pi_j(\mathbf c))\mathbf n{\rm d}\sigma&(1\leq i\leq 6,\,1\leq j\leq n);
\end{cases}$$
(recall that the viscosity $ \mu$ can be chosen equal to 1), the dynamics \eqref{dynamics} can now be rewritten in the form:
\begin{equation}
\label{dynamics:1}
\begin{pmatrix}\boldsymbol\Omega\\
\mathbf v\end{pmatrix}=-\mathbb M(\mathbf c)^{-1}\mathbb N(\mathbf c)\dot s,\qquad (0<t< T).
\end{equation}
%For any $c:=(\bar\varrho,\vartheta,\mathcal V)\in\tilde{\mathcal C}$, we introduce the projections $\pi_1(c):=\bar\varrho$, $\pi_2(c):=\vartheta$ and $\pi_3(c):=\mathcal V$. We shall prove
Let us focus on the properties of $\mathcal C(n)$ and $\mathcal C_F(n)$.

\begin{theorem}
\label{theorem:1}
For any positive integer $n$, the set  ${\mathcal C}(n)$ is an analytic connected embedded submanifold  of $C^1_0(\mathbf R^3)^3\times (C^1_0(\mathbf R^3)^3)^n$ of codimension $N:=3(n+2)(n+1)/2$.
\end{theorem}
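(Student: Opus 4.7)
The plan is to exhibit $\mathcal{C}(n)$ as the transverse zero set of a real-analytic map on a natural open set, and to treat connectedness as a separate step. Let $\mathcal{O}$ denote the open subset of $E:=C^1_0(\mathbf{R}^3)^3\times(C^1_0(\mathbf{R}^3)^3)^n$ on which the freeness condition~(i) holds (it is open since it reads as the non-vanishing of a Gram-type determinant built from the $3n$ traces $\mathbf{V}_i|_\Sigma\cdot\mathbf{e}_k$). Introduce the map $\Phi:\mathcal{O}\to\mathbf{R}^N$ whose scalar components are the coordinates of
\[
\int_\Sigma\varTheta\,{\rm d}\sigma,\quad \int_\Sigma\mathbf{V}_i\,{\rm d}\sigma,\quad \int_\Sigma\varTheta\times\mathbf{V}_i\,{\rm d}\sigma,\quad \int_\Sigma\mathbf{V}_i\times\mathbf{V}_j\,{\rm d}\sigma \quad (1\le i<j\le n).
\]
Each component is at most bilinear in the variables, so $\Phi$ is real-analytic, and the count $3(n+1)+3\binom{n+1}{2}=3(n+1)(n+2)/2=N$ matches the announced codimension. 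Since $\mathcal{C}(n)=\Phi^{-1}(\mathbf{0})$, the theorem reduces to two claims: (a) $\Phi$ is a submersion at every point of $\mathcal{C}(n)$ and (b) $\mathcal{C}(n)$ is path-connected.

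The submersion claim (a) I would establish by duality: ${\rm d}\Phi(c)$ is onto if and only if its adjoint ${\rm d}\Phi(c)^*:\mathbf{R}^N\to E^*$ is one-to-one. Writing an element of $\mathbf{R}^N$ as $y=(a_0,(a_i),(b_i),(c_{ij})_{i<j})$ with each block in $\mathbf{R}^3$ and using the cyclic identity $\mathbf{u}\cdot(\mathbf{v}\times\mathbf{w})=\mathbf{w}\cdot(\mathbf{u}\times\mathbf{v})$, the vanishing of ${\rm d}\Phi(c)^*y$ on arbitrary tangent vectors $\delta\vartheta$ and $\delta\mathbf{V}_k$ becomes
\[
a_0+\sum_{i=1}^n\mathbf{V}_i\times b_i=\mathbf{0},\qquad a_k+b_k\times\varTheta+\sum_{j\ne k}\mathbf{V}_j\times\tilde c_{kj}=\mathbf{0} \quad (1\le k\le n),
\]
identically on $\Sigma$, where $\tilde c_{kj}$ is the antisymmetric extension of $(c_{ij})_{i<j}$ ($\tilde c_{kj}=c_{kj}$ for $k<j$ and $\tilde c_{kj}=-c_{jk}$ for $k>j$). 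Integrating the first identity over $\Sigma$ and using $\int_\Sigma\mathbf{V}_i\,{\rm d}\sigma=\mathbf{0}$ from condition~(ii) forces $a_0=\mathbf{0}$; the residual identity $\sum_i\mathbf{V}_i\times b_i\equiv 0$ reads in components $\sum_{i,m}\left(\sum_p\varepsilon_{\ell mp}b_i^p\right)V_i^m\equiv 0$ on $\Sigma$ for each $\ell=1,2,3$, and freeness~(i) forces $b_i=\mathbf{0}$ for every $i$. The same two-step procedure (integrate, then expand components and invoke freeness) applied to each $k$-equation yields first $a_k=\mathbf{0}$ and then $\tilde c_{kj}=\mathbf{0}$, whence $c_{ij}=\mathbf{0}$. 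The adjoint is thus injective, and the analytic implicit function theorem yields the analytic embedded submanifold structure with the claimed codimension.

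Connectedness is the genuinely delicate point, because the cross-product constraints are bilinear and naive linear homotopies destroy them. I would construct a path from an arbitrary $c\in\mathcal{C}(n)$ to a distinguished base point $c^\star=(0,\mathbf{V}_1^\star,\ldots,\mathbf{V}_n^\star)$, where the $\mathbf{V}_i^\star$ are built from disjoint groups of spherical harmonics on $\Sigma$ of degree $\ge 2$ extended by a cutoff; mutual orthogonality of harmonics makes all the constraints in (i)--(ii) trivially satisfied. The path is constructed in two stages. First, path-connect $\vartheta$ to $0$ inside the affine set $B:=\{\vartheta\in D^1_0(\mathbf{R}^3)\colon\int_\Sigma\vartheta\,{\rm d}\sigma=\mathbf{0}\}$: starting from any isotopy $\vartheta(s)$ from $\vartheta$ to $0$ in the path-connected set $D^1_0(\mathbf{R}^3)$, compensate its mean $\bar\vartheta(s)=(1/4\pi)\int_\Sigma\vartheta(s)\,{\rm d}\sigma$ by subtracting $\bar\vartheta(s)\xi$ for a cutoff $\xi$ equal to $1$ near $\Sigma$ and with $\|\nabla\xi\|_\infty$ small enough to keep the perturbed map a diffeomorphism, in the spirit of Proposition~\ref{PROP:1_1}; the submersion~(a) then lifts this path in $B$ to a path in $\mathcal{C}(n)$. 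Second, above $\vartheta=0$, connect $\mathcal{V}$ to $\mathcal{V}^\star$ by interpolating inside a finite-dimensional subspace containing both and correcting the bilinear defect at each step by a small vertical perturbation extracted from the surjectivity of ${\rm d}\Phi$. The main obstacle is precisely this last interpolation, where the bilinear cross-product constraints must be enforced along an explicit path; it is the step where the previously established submersion property will do the heavy lifting.
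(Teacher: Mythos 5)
Your submersion argument is essentially the paper's: both write the constraints as an analytic map into $\mathbf R^N$, show its differential has injective transpose using the cyclic identity, the $\int_\Sigma\mathbf V_i\,{\rm d}\sigma=\mathbf 0$ constraints, and the freeness hypothesis (i), and invoke the Banach analytic implicit function theorem. This part is sound.

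The connectedness argument is where you diverge from the paper, and it is also where your proposal has a real gap. You reduce connectedness to two lifting claims — that the path from $\vartheta$ to $0$ inside the slice $\{\int_\Sigma\vartheta\,{\rm d}\sigma=\mathbf 0\}$ ``lifts'' to $\mathcal C(n)$ because of the submersion property, and that the fiber interpolation can be ``corrected'' using surjectivity of ${\rm d}\Phi$. But the submersion property of $\Phi:\mathcal O\to\mathbf R^N$ gives you \emph{local} charts for $\mathcal C(n)$; it does not, by itself, make the projection $\varPi:\mathcal C(n)\to\{\int_\Sigma\vartheta\,{\rm d}\sigma=\mathbf 0\}$ a fibration, and so it does not give you a horizontal lift of a path in the base. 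You would need to patch local lifts across a finite cover of the compact path and control the accumulated fiber drift, or equip the submersion with a connection — neither is done. Similarly, the phrase ``correcting the bilinear defect at each step by a small vertical perturbation'' is exactly the hard content: the perturbation must keep you in $\mathcal C(n)$ throughout, not just at the endpoints, and surjectivity of ${\rm d}\Phi$ at a point does not supply a path.

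The paper avoids lifting altogether by an explicit construction: it picks a piecewise-linear path $t\mapsto\bar\vartheta_t$ in $D^1_0(\mathbf R^3)$ from $\vartheta^\dag$ to $\vartheta^\ddag$ with nodes $\bar\varTheta^j$, then chooses a \emph{single} auxiliary family $\mathcal W=(\mathbf W_1,\ldots,\mathbf W_n)$ that satisfies all constraints simultaneously against $\mathcal V^\dag$, against $\mathcal V^\ddag$, and against \emph{every} node $\bar\varTheta^j$ (this is possible because $C^1_0(\mathbf R^3)^3$ is infinite-dimensional, so one can iteratively pick vectors in the orthogonal complement of a growing finite set of linear constraints). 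The path $(\vartheta^\dag,\mathcal V^\dag)\rightsquigarrow(\vartheta^\dag,\mathcal W)\rightsquigarrow(\vartheta^\ddag,\mathcal W)\rightsquigarrow(\vartheta^\ddag,\mathcal V^\ddag)$ then stays in $\mathcal C(n)$ because along each straight segment the bilinear constraints expand into cross terms that vanish by construction, and the constraints involving $\varTheta$ are linear in $\varTheta$ and therefore preserved along the linear $\vartheta$-interpolation with $\mathcal W$ frozen. If you want to complete your proof, I would recommend you replace the lifting heuristic with this elementary explicit path.
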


The definition and the main properties of Banach space valued analytic functions are summarized in the article \cite{Whittlesey:1965aa}. 

\begin{proof}%[Proof of Theorem~\ref{theorem:1}]
For any $c:=(\vartheta,\mathcal V)\in C^1_0(\mathbf R^3)^3\times (C^1_0(\mathbf R^3)^3)^n$, denote $\mathbf V_0:={\rm Id}+\vartheta$ and $\mathcal V:=(\mathbf V_1,\ldots,\mathbf V_n)$. Then, define 
for $k=0,1,\ldots,n$, the functions $\Lambda_k: C^1_0(\mathbf R^3)^3\times (C^1_0(\mathbf R^3)^3)^n\to\mathbf R^{3(n+1-k)} $ by $\Lambda_k(c):=
\Big(\int_{\Sigma}\mathbf V_k\,{\rm d}x,\,
\int_{\Sigma}\mathbf V_k\times \mathbf V_{k+1}\,{\rm d}x,\,\ldots,
\int_{\Sigma}\mathbf V_k\times \mathbf V_n\,{\rm d}x\Big) ^\ast$.
Every function $\Lambda_k$ is analytic and so is $\Lambda:=(\Lambda_0,\ldots,\Lambda_n)^\ast:C^1_0(\mathbf R^3)^3\times (C^1_0(\mathbf R^3)^3)^n\to \mathbf R^N$ ($N:=3(n+2)(n+1)/2$). In order to prove that $\partial_c\Lambda(c)$ (the differential of $\Lambda$ at the point $c$) is onto for any $c\in\mathcal C(n)$, assume that there exist $(n+2)(n+1)/2$ vectors $\boldsymbol\alpha_i^j\in\mathbf R^3$ ($0\leq i\leq j\leq n$) such that:
\begin{equation}
\label{gamma_onto}
\sum_{i=0}^n\boldsymbol\alpha_i\cdot\langle\partial_c\Lambda(c),c^h\rangle=\mathbf 0,\qquad\forall\,c^h\in C^1_0(\mathbf R^3)^3\times (C^1_0(\mathbf R^3))^3,
\end{equation}
where $\boldsymbol\alpha_i:=(\boldsymbol\alpha_i^i,\boldsymbol\alpha_i^{i+1},\ldots,\boldsymbol\alpha_i^n)^\ast\in\mathbf R^{3(n+1-i)}$ ($j=0,\ldots,n$) and $c^h:=(\vartheta^h,\mathcal V^h)\in C^1_0(\mathbf R^3)^3\times (C^1_0(\mathbf R^3)^3)^n$ with $\mathbf V_0^h:={\rm Id}+\vartheta^h$ and $\mathcal V^h:=(\mathbf V^h_1,\ldots,\mathbf V^h_n)$.
Reorganizing the terms in \eqref{gamma_onto}, we obtain that:
$$
\sum_{k=0}^n\int_{\Sigma}\mathbf V_k^h\cdot\Big[\sum_{j=0}^{k-1}\boldsymbol\alpha_j^k\times\mathbf V_j+\boldsymbol\alpha_k^k-\sum_{j=k+1}^{n}\boldsymbol\alpha_k^j\times\mathbf V_j\Big]{\rm d}x=0.
$$
Since this identity has to be satisfied for any $(\vartheta^h,\mathcal V^h)\in C^1_0(\mathbf R^3)^3\times (C^1_0(\mathbf R^3))^3$, we deduce that, for every $k=0,\ldots,n$:
\begin{equation}
\label{identity:1}
\sum_{j=0}^{k-1}\boldsymbol\alpha_j^k\times\mathbf V_j|_\Sigma+\boldsymbol\alpha_k^k-\!\!\!\sum_{j=k+1}^{n}\boldsymbol\alpha_k^j\times\mathbf V_j|_\Sigma=\mathbf 0.\end{equation}
Integrating this equality over $\Sigma$, we get that $\boldsymbol\alpha^k_k=\mathbf 0$ ($k=0,\ldots,n$).  Taking into account Hypothesis (ii) of Definition~\ref{def:sc}, the identity \eqref{identity:1} with $k=0$ leads to $\boldsymbol\alpha_0^j=\mathbf 0$ for every $j=1,\ldots,n$. There are no more terms involving $\mathbf V_0$ in the other equations and invoking again Hypothesis (ii) we eventually get $\boldsymbol\alpha_i^j=\mathbf 0$ for $1\leq i<j\leq n$. So, equality \eqref{gamma_onto} entails that $\boldsymbol\alpha_i=\mathbf 0$ for all $i=0,\ldots,n$ and the mapping $\partial_c\Lambda(c)$ is indeed onto for all $c\in\mathcal C(n)$.

The linear space $X=\mathrm{Ker}\,\partial_c \Lambda(c)$ is closed since $\Lambda$ is analytic. Let $Y$ be an algebraic supplement of $X$ in $C^1_0(\mathbf R^3)^3 \times (C^1_0(\mathbf R^3)^3)^n$, and denote by $P_Y$ the linear projection onto $Y$ along $X$. A crucial observation is that the linear space $Y$ is isomorphic to $\mathbf{R}^N$ and hence it is finite dimensional and closed in  $C^1_0(\mathbf R^3)^3 \times (C^1_0(\mathbf R^3)^3)^n$. Define the analytic mapping $f:X\times Y \rightarrow \mathbf{R}^N$ by
$f(x,y)=\Lambda(c+x+y)$. The mapping $\partial_yf(0,0)=\partial_c\Lambda(c)\circ P_Y$ being onto, the implicit function theorem (analytic version in Banach spaces, see \cite{Whittlesey:1965aa}) asserts that there exist an open neighborhood $\mathcal O_1$ of $0$ in $X$, an open neighborhood $\mathcal O_2$ of $0$ in $Y$, and an analytic mapping $g:\mathcal O_1 \rightarrow Y$ such that $g(0)=0$ and, for every $(x,y)$ in $\mathcal O_1\times \mathcal O_2$, the two following assertions are equivalent: (i) $f(x,y)=0$ (or, in other words, $c+x+y$ belongs to $\mathcal{C}(n)$), and
(ii) $y=g(x)$.
The analytic mapping $g$ provides a local parameterization of $\mathcal{C}(n)$ in a neighborhood of $c$.

In order to prove that $\mathcal C(n)$ is path-connected, consider two elements $c^\dagger:=(\vartheta^\dagger,\mathcal V^\dagger)$ and $c^\ddagger:=(\vartheta^\ddagger,\mathcal V^\ddagger)$ of $\mathcal C(n)$ and denote $\varTheta^\dag:={\rm Id}+\vartheta^\dag$, $\mathcal V^\dag:=(\mathbf V_1^\dag,\ldots,\mathbf V_n^\dag)$ and $\varTheta^\ddag:={\rm Id}+\vartheta^\ddag$, $\mathcal V^\ddag:=(\mathbf V_1^\ddag,\ldots,\mathbf V_n^\ddag)$.
%will be the conclusion of a three steps reasoning. Let $c^i:=(\varrho^i,\vartheta^i,\mathcal V^i)$ ($i=1,2$) with $\mathcal V^i:=(V^i_1,\ldots,V^i_n)$ be two elements of $\mathcal C$. We are going to prove that there exists a continuous path in $\mathcal C$ linking $c^1$ to $c^2$. To do so, we are going to prove first that there exists a continuous path linking $c^1$ to $c^\dagger$, an element of $\mathcal C$ such that $\pi_2(c^\dagger)=\vartheta^2$. Then, that there exists a continuous path between $c^\dagger$ and $c^\ddagger$, a SS for which $\pi_2(c^\ddagger)=\vartheta^2$ and $\pi_3(c^\ddagger)=\mathcal V^2$ and eventually that $c^\ddagger$ is linked to $c^2$ by a continuous path as well.
According to Definition~\ref{def_D10}, $D^1_0(\mathbf R^3)$ is open and connected. This entails that it is always possible to find a continuous, piecewise linear path $t:[0,1]\mapsto \bar\vartheta_t\in D^1_0(\mathbf R^3)$ such that $\bar\vartheta_{t=0}=\vartheta^\dag$ and $\bar\vartheta_{t=1}=\vartheta^\ddag$. We introduce $0=t_0<t_1<\ldots<t_k=1$, a subdivision of the interval $[0,1]$ such that $t\mapsto\bar\vartheta_t$ is linear on every subinterval $[t_j,t_{j+1}]$ ($j=0,\ldots,k-1$) and we denote $\bar\varTheta_t:={\rm Id}+\bar\vartheta_t$, $\bar\vartheta^j:=\bar\vartheta_{t=t_j}$, $\bar\varTheta^j:={\rm Id}+\bar\vartheta^j$  ($j=0,\ldots,k$).  Since $C^1_0(\mathbf R^3)^3$ is an infinite dimensional Banach space, it is always possible to find by induction $\mathbf W_1,\mathbf W_2,\ldots,\mathbf W_n$ in $C^1_0(\mathbf R^3)^3$ such that (i) both families 
$\{\mathbf W_1|_{\Sigma}\cdot \mathbf e_k,\ldots,\mathbf W_n|_{\Sigma}\cdot \mathbf e_k,\mathbf V_1^\dag|_{\Sigma}\cdot \mathbf e_k,\ldots,\mathbf V_n^\dag|_{\Sigma}\cdot \mathbf e_k,\,k=1,2,3\}$ and $\{\mathbf W_1|_{\Sigma}\cdot \mathbf e_k,\ldots,\mathbf W_n|_{\Sigma}\cdot \mathbf e_k,\mathbf V_1^\ddag|_{\Sigma}\cdot \mathbf e_k,\ldots,\mathbf V_n^\ddag|_{\Sigma}\cdot \mathbf e_k,\,k=1,2,3\}$ are free in $C^1_0(\mathbf R^3)$ and (ii) for any pair of elements $\mathbf V$, $\mathbf V'$, both picked in the same family, $\int_\Sigma\mathbf V{\rm d}x=\mathbf 0$, $\int_\Sigma\bar\varTheta^j\times\mathbf V{\rm d}x=\mathbf 0$ (for all $j=1,\ldots,k$) and $\int_\Sigma\mathbf V\times\mathbf V'{\rm d}x=\mathbf 0$. Define the function $t\in[0,1]\mapsto\mathbf V^i_t\in C^1_0(\mathbf R^3)^3$ by $\mathbf V^i_t:=(1-2t)\mathbf V_i^\dag+2t\mathbf W_i$ if $0\leq t\leq 1/2$ and $\mathbf V^i_t:=(2-2t)\mathbf W_i+(2t-1)\mathbf V^\ddag$ if $1/2<t\leq 1$ and denote $\mathcal V_t:=(\mathbf V^1_t,\ldots,\mathbf V^n_t)\in (C^1_0(\mathbf R^3)^3)^n$. Eventually, a continuous function linking $c^\dag$ to $c^\ddag$ is given by  $t\in[0,1]\mapsto c_t\in\mathcal C(n)$ with $c_t:=(\vartheta^\dag,\mathcal V_{3t/2})$ if $0\leq t\leq 1/3$, $c_t:=(\vartheta_{3t-1},\mathcal V_{1/2})$ if $1/3<t\leq 2/3$ and $c_t:=(\vartheta^\ddag,\mathcal V_{3t/2-1/2})$ if $2/3<t\leq 1$.
\end{proof}

We omit the proof of the following corollary, similar to that of the theorem above:
%\note{Tu vois une fa\c con plus jolie de conclure ici ?}

\begin{corollary}
\label{cor:1}
For any positive integer $n$, the set $\mathcal C_F(n)$ is an analytic connected embedded submanifold of $C^1_0(\mathbf R^3)^3\times (C^1_0(\mathbf R^3)^3)^n\times \mathbf R^n$ of codimension $N:=3(n+2)(n+1)/2$.
\end{corollary}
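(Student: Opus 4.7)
The plan is to mirror the argument of Theorem~\ref{theorem:1} on the larger ambient space $E := C^1_0(\mathbf{R}^3)^3\times (C^1_0(\mathbf{R}^3)^3)^n\times\mathbf{R}^n$. I would define the analytic map $\tilde\Lambda\colon E\to\mathbf{R}^N$ by $\tilde\Lambda(c,s):=\Lambda(c)$, where $\Lambda$ is the constraint map built in the proof of Theorem~\ref{theorem:1}. Its differential at $(c,s)$ is $\partial_c\Lambda(c)$ composed with the projection onto the first two factors, and is therefore surjective at every $(c,s)$ with $c\in\mathcal{C}(n)$. Repeating the implicit-function-theorem step with the enlarged kernel $\ker\partial_c\Lambda(c)\times\mathbf{R}^n$ (since $\tilde\Lambda$ does not depend on $s$) and the same finite-dimensional supplement $Y\cong\mathbf{R}^N$ of $\ker\partial_c\Lambda(c)$ used in Theorem~\ref{theorem:1}, one obtains a local analytic parameterization showing that $\tilde\Lambda^{-1}(\mathbf{0})=\mathcal{C}(n)\times\mathbf{R}^n$ is an analytic embedded submanifold of $E$ of codimension $N$.

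Next I would identify $\mathcal{C}_F(n)$ inside this submanifold. Since $D^1_0(\mathbf{R}^3)$ is open in $C^1_0(\mathbf{R}^3)^3$ (see the appendix), the set
$$U:=\bigl\{(c,s)\in\mathcal{C}(n)\times\mathbf{R}^n\,:\,\vartheta+{\textstyle\sum_i} s_i\mathbf{V}_i\in D^1_0(\mathbf{R}^3)\bigr\}$$
is open in $\mathcal{C}(n)\times\mathbf{R}^n$. The key claim is that $\mathcal{C}_F(n)$ is itself open in $\mathcal{C}(n)\times\mathbf{R}^n$. To prove it, given $(c_0,s_0)\in\mathcal{C}_F(n)$, pick a continuous path $\alpha\colon[0,1]\to\mathcal{S}(c_0)$ with $\alpha(0)=0$ and $\alpha(1)=s_0$; for any $(c,s)$ sufficiently close to $(c_0,s_0)$ consider the perturbed path $\beta_{c,s}(t):=\alpha(t)+t(s-s_0)$, which continuously deforms $0$ to $s$. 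Compactness of the image of $\alpha$, combined with the openness of $D^1_0(\mathbf{R}^3)$ in $C^1_0(\mathbf{R}^3)^3$, furnishes a uniform neighborhood of $(c_0,s_0)$ on which $\vartheta+\sum_i\beta_{c,s}(t)_i\mathbf{V}_i$ remains in $D^1_0(\mathbf{R}^3)$ for every $t\in[0,1]$; hence $s$ lies in the connected component of the corresponding fiber containing $0$, i.e.\ $s\in\mathcal{S}(c)$. Being an open subset of an analytic embedded submanifold of $E$ of codimension $N$, $\mathcal{C}_F(n)$ inherits the same structure and codimension.

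Connectedness follows by a three-leg concatenation. Given two full signatures $(c^\dag,s^\dag)$ and $(c^\ddag,s^\ddag)$ in $\mathcal{C}_F(n)$, I would first join $(c^\dag,s^\dag)$ to $(c^\dag,0)$ by a path inside the connected fiber $\{c^\dag\}\times\mathcal{S}(c^\dag)$; next join $(c^\dag,0)$ to $(c^\ddag,0)$ via the path $(c_t,0)$ where $c_t$ is furnished by the connectedness part of Theorem~\ref{theorem:1}, noting that $0\in\mathcal{S}(c_t)$ automatically so that the lift stays inside $\mathcal{C}_F(n)$; finally join $(c^\ddag,0)$ to $(c^\ddag,s^\ddag)$ inside $\{c^\ddag\}\times\mathcal{S}(c^\ddag)$.

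I expect the main technical obstacle to be the openness claim, which amounts to a continuous dependence statement for the fibers $\mathcal{S}(c)$ as $c$ varies in $\mathcal{C}(n)$. The perturbed-path construction above must be carried out carefully, using compactness of the path together with the openness of $D^1_0(\mathbf{R}^3)$ to obtain a neighborhood size that is uniform in $t$. Once this point is settled, everything else is either a direct transposition of the implicit-function argument from Theorem~\ref{theorem:1} or an elementary gluing of paths.
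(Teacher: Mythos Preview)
Your proposal is correct and aligns with the paper, which omits the proof entirely, stating only that it is ``similar to that of the theorem above.'' You have correctly isolated the one genuinely new ingredient beyond Theorem~\ref{theorem:1}---namely, that $\mathcal{C}_F(n)$ is open in $\mathcal{C}(n)\times\mathbf{R}^n$---and your compactness/perturbed-path argument handles it cleanly; the only slip is the equality $\tilde\Lambda^{-1}(\mathbf 0)=\mathcal{C}(n)\times\mathbf{R}^n$, which should be a containment (the open conditions $\vartheta\in D^1_0(\mathbf R^3)$ and~(i) of Definition~\ref{def:sc} must still be imposed), but this is harmless since you immediately pass to the open subset $\mathcal{C}_F(n)$.
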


We denote by $\varPi$ the projection of $\mathcal C(n)$ onto $D^1_0(\mathbf R^3)$ defined by $\varPi(c)=\vartheta$ for all $c:=(\vartheta,\mathcal V)\in\mathcal C(n)$. The proof of the following corollary is a straightforward consequence of arguments already used in the proof of Theorem~\ref{theorem:1}:

\begin{corollary}
\label{cor:2}
For any positive integer $n$ and for any $\vartheta\in\varPi(\mathcal C(n))$, the section $\varPi^{-1}(\{\vartheta\})$ is an embedded connected analytic submanifold of $\{\vartheta\}\times(C^1_0(\mathbf R^3)^3)^n$ (identified with $(C^1_0(\mathbf R^3)^3)^n$) of codimension $3n(n+3)/2$.
\end{corollary}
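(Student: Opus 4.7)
The plan is to adapt the implicit-function-theorem argument of Theorem~\ref{theorem:1} to the slice $\{\vartheta\}\times(C^1_0(\mathbf R^3)^3)^n$. Having fixed $\vartheta\in\varPi(\mathcal C(n))$, the scalar constraint $\int_\Sigma\varTheta\,{\rm d}\sigma=\mathbf 0$ already holds, and a tuple $\mathcal V=(\mathbf V_1,\ldots,\mathbf V_n)$ lies in $\varPi^{-1}(\{\vartheta\})$ precisely when it satisfies the open freeness condition~(i) of Definition~\ref{def:sc} together with
$$\int_\Sigma \mathbf V_k\,{\rm d}\sigma=\mathbf 0,\quad \int_\Sigma \varTheta\times \mathbf V_k\,{\rm d}\sigma=\mathbf 0\quad(k=1,\ldots,n),\qquad \int_\Sigma \mathbf V_i\times\mathbf V_j\,{\rm d}\sigma=\mathbf 0\quad(1\le i<j\le n).$$
Counting gives $3n+3n+3\binom{n}{2}=3n(n+3)/2$ scalar equations, which is precisely the announced codimension; equivalently, one removes from the $N=3(n+2)(n+1)/2$ components of $\Lambda$ used in Theorem~\ref{theorem:1} the three components coming from $\int_\Sigma\varTheta\,{\rm d}\sigma$.

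I would then collect the remaining conditions into a single analytic map $\tilde\Lambda:(C^1_0(\mathbf R^3)^3)^n\to\mathbf R^{3n(n+3)/2}$ and check that $\partial_{\mathcal V}\tilde\Lambda$ is onto at every point of $\tilde\Lambda^{-1}(\mathbf 0)$ by repeating verbatim the dual computation performed in Theorem~\ref{theorem:1}: testing against arbitrary variations $(\mathbf V^h_1,\ldots,\mathbf V^h_n)$, collecting the coefficient of each $\mathbf V^h_k$ on $\Sigma$, integrating to kill the constant part, then invoking Hypothesis~(i) of Definition~\ref{def:sc} to conclude that all multipliers vanish. The analytic implicit function theorem in Banach spaces (\cite{Whittlesey:1965aa}) then supplies a local analytic parameterization and identifies $\varPi^{-1}(\{\vartheta\})$ as an embedded analytic submanifold of codimension $3n(n+3)/2$.

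For path-connectedness I would mimic the closing step of the proof of Theorem~\ref{theorem:1} with $\vartheta$ held fixed throughout. Given $\mathcal V^\dag,\mathcal V^\ddag\in\varPi^{-1}(\{\vartheta\})$, I would construct by induction auxiliary fields $\mathbf W_1,\ldots,\mathbf W_n\in C^1_0(\mathbf R^3)^3$ (exploiting the infinite dimensionality of that space) such that each of the two enlarged families $\{\mathbf V_j^\dag\}\cup\{\mathbf W_i\}$ and $\{\mathbf V_j^\ddag\}\cup\{\mathbf W_i\}$ meets conditions~(i)--(ii) of Definition~\ref{def:sc} and such that additionally $\int_\Sigma\varTheta\times\mathbf W_i\,{\rm d}\sigma=\mathbf 0$. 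The piecewise linear path from $\mathcal V^\dag$ through $(\mathbf W_1,\ldots,\mathbf W_n)$ to $\mathcal V^\ddag$ then stays in $\varPi^{-1}(\{\vartheta\})$: linearity preserves the single-integral conditions, and the bilinear term $\int_\Sigma\mathbf V_i(t)\times\mathbf V_j(t)\,{\rm d}\sigma$ expands into four contributions each of which vanishes by construction.

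The main obstacle is the inductive production of the $\mathbf W_i$: at each step one must pick $\mathbf W_i$ outside a finite-codimensional closed subspace (to maintain the freeness condition) while simultaneously solving finitely many linear integral constraints, including the new constraint $\int_\Sigma\varTheta\times\mathbf W_i\,{\rm d}\sigma=\mathbf 0$ that did not appear in Theorem~\ref{theorem:1}. Because all of these are closed conditions of finite codimension in the infinite-dimensional space $C^1_0(\mathbf R^3)^3$, the induction proceeds with no genuine obstruction, and everything else is a direct transcription of the arguments already spelled out for Theorem~\ref{theorem:1}.
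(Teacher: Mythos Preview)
Your proposal is correct and is precisely what the paper has in mind: the paper omits the proof entirely, stating only that it is ``a straightforward consequence of arguments already used in the proof of Theorem~\ref{theorem:1},'' and your adaptation---restricting $\Lambda$ to the slice $\{\vartheta\}\times(C^1_0(\mathbf R^3)^3)^n$, rerunning the dual surjectivity computation and the piecewise-linear connectedness path with $\vartheta$ held fixed---is exactly that. One small remark: the constraint $\int_\Sigma\varTheta\times\mathbf W_i\,{\rm d}\sigma=\mathbf 0$ you flag as ``new'' in the connectedness step actually already appears in the proof of Theorem~\ref{theorem:1} (as the conditions $\int_\Sigma\bar\varTheta^j\times\mathbf V\,{\rm d}x=\mathbf 0$ at the subdivision points), so the fixed-$\vartheta$ case is in fact a simplification rather than an extension.
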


\section{Sensitivity Analysis of the Matrices $\mathbb M(\mathbf c)$ and $\mathbb N(\mathbf c)$}
\label{sensiti:mass}
%From any $c\in\mathcal C$, we can define the domain $\mathcal B:=({\rm Id}+\vartheta)(\mathcal B)$, the vector fields $v_i:=V_i(\varTheta^{-1})$ ($i=1,\ldots,n$) and the density $\varrho^\ast(x):=\bar\varrho(\varTheta^{-1}(x))/J(\varTheta^{-1}(x))$. Notice that the corresponding shape space $\mathcal S$ (as defined in \eqref{def:S}) depends on $\mathcal V$ and hence on the SS as well. We denote it $\mathcal S_c$ and we introduce:
%$$\mathcal C^\dagger:=\{(c,s)\in\mathcal C\times\mathbf R^n\,:\,s\in\mathcal S_c\}.$$
%This set is an open subset of the Banach space $\mathcal C\times\mathbf R^n$.
For any positive integers $k$ and $l$, we denote ${\rm M}(k,l)$ the vector space of the matrices of size $k\times l$ (or simply ${\rm M}(k)$ when $l=k$).
\begin{theorem}
\label{theorem:2}
For any positive integer $n$, the mappings $\mathbf c\in\mathcal C_F(n)\mapsto \mathbb M(\mathbf c)\in {\rm M}(6)$ and $\mathbf c\in\mathcal C_F(n)\mapsto \mathbb N(\mathbf c)\in {\rm M}(6,n)$ are analytic.
\end{theorem}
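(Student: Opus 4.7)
The strategy is the classical one for shape analyticity of solutions to elliptic boundary value problems: pull everything back to a fixed reference domain via the analytic family of diffeomorphisms $\varTheta_s = \mathrm{Id}+\vartheta+\sum_i s_i\mathbf V_i$, rewrite the Stokes problems on the fixed exterior domain $\mathcal F = \mathbf R^3\setminus\bar B$ as a family of operators depending analytically on the SFS $\mathbf c$, invoke the analytic implicit function theorem (or the analyticity of inversion of an analytic family of isomorphisms between Banach spaces) to conclude that the pulled-back velocities and pressures depend analytically on $\mathbf c$, and finally observe that the entries of $\mathbb M(\mathbf c)$ and $\mathbb N(\mathbf c)$ can be expressed as integrals over the \emph{fixed} surface $\Sigma$ (or the fixed volume $\mathcal F$) of quantities that are analytic in $\mathbf c$.

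More precisely, I would first verify that the assignment $\mathbf c = ((\vartheta,\mathcal V),s) \mapsto \varTheta_s \in C^1(\mathbf R^3)^3$ is (trivially) affine, hence analytic, and that on the open subset of $\mathcal C_F(n)$ where $\varTheta_s$ is a diffeomorphism the inverse $\varTheta_s^{-1}$ as well as $(\nabla\varTheta_s)^{-1}$ depend analytically on $\mathbf c$; the latter follows from the fact that matrix inversion $A\mapsto A^{-1}$ is analytic on $\mathrm{GL}(3)$ and composition of analytic maps is analytic. Next, for each $\mathbf c$ the pair $(\hat{\mathbf u}_i,\hat p_i) := (\mathbf u_i(\mathbf c)\circ\varTheta_s,\,p_i(\mathbf c)\circ\varTheta_s)$ and similarly $(\hat{\mathbf w}_j,\hat\pi_j)$ solve a transformed Stokes-type system on the fixed domain $\mathcal F$, of the form $\mathcal L(\mathbf c)(\hat{\mathbf u},\hat p) = 0$ in $\mathcal F$ with boundary data $\hat{\mathbf u}|_\Sigma = \mathbf g_i(\mathbf c)$ or $\mathbf V_j|_\Sigma$ on $\Sigma$, where the coefficients of $\mathcal L(\mathbf c)$ are polynomials in the entries of $\nabla\varTheta_s$ and $(\nabla\varTheta_s)^{-1}$ and their first derivatives. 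Writing this as a bounded operator $\mathcal L(\mathbf c): W^1_0(\mathcal F)^3\times L^2(\mathcal F) \to (W^1_0(\mathcal F)^3)'\times L^2(\mathcal F)$ (plus trace), the map $\mathbf c\mapsto \mathcal L(\mathbf c)$ is analytic. At $s=0$ (and also for any admissible $\mathbf c$) the operator is an isomorphism by the well-posedness result recalled in the appendix (Section~\ref{SEC:Stokes}); since the set of isomorphisms is open in the operator norm and inversion is analytic on it, the solution map $\mathbf c\mapsto(\hat{\mathbf u}_i(\mathbf c),\hat p_i(\mathbf c))$ is analytic, and likewise for $(\hat{\mathbf w}_j(\mathbf c),\hat\pi_j(\mathbf c))$ (note that their boundary data $\mathbf V_j|_\Sigma$ do not even depend on $\mathbf c$ once pulled back to $\Sigma$).

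Finally, performing the change of variables in the integrals defining $M_{ij}(\mathbf c)$ and $N_{ij}(\mathbf c)$, each entry becomes an integral over the fixed surface $\Sigma$ (or, using the volume representation $M_{ij} = 2\int_{\mathcal F_{\mathbf c}} D(\mathbf u_i):D(\mathbf u_j)\,\mathrm dx$, over the fixed $\mathcal F$) of a polynomial expression in $\hat{\mathbf u}_i$, $\nabla\hat{\mathbf u}_i$, $\hat p_i$, the entries of $(\nabla\varTheta_s)^{-1}$, the Jacobian $\det(\nabla\varTheta_s)$, and the components of the pulled-back normal (all analytic in $\mathbf c$). Since continuous multilinear maps are analytic and integration against a fixed measure on $\Sigma$ or $\mathcal F$ is a continuous linear map from the relevant Sobolev/trace spaces into $\mathbf R$, each $M_{ij}(\mathbf c)$ and $N_{ij}(\mathbf c)$ is the composition of analytic maps, hence analytic.

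The main technical obstacle lies in verifying carefully that the transformed Stokes operator $\mathcal L(\mathbf c)$ is indeed analytic as a map into $\mathcal B(W^1_0(\mathcal F)^3\times L^2(\mathcal F),\ldots)$: one must choose the right weighted Sobolev framework (the space $W^1_0$ of the appendix is designed exactly for this exterior problem), check that multiplication by $C^1$ coefficients is continuous into the target space, and argue that the $\mathbf c$-dependence — which comes through $\nabla\varTheta_s$ and $(\nabla\varTheta_s)^{-1}$, i.e. through an affine map followed by the analytic map $A\mapsto A^{-1}$ — is genuinely analytic and not merely $C^\infty$. Once this bookkeeping is accepted, the conclusion follows by the standard composition rules for analytic maps between Banach spaces as recalled in \cite{Whittlesey:1965aa}.
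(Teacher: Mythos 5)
Your proposal is correct and follows essentially the same route as the paper's proof via Lemma~\ref{lemma:import}: pull the Stokes problems back to the fixed exterior domain $F=\mathbf R^3\setminus\bar B$, observe that the transformed coefficients (built from $\nabla\varTheta_s$, its inverse, and the Jacobian) depend analytically on $\mathbf c$ because $A\mapsto A^{-1}$ is analytic, apply the analytic implicit function theorem in Banach spaces (equivalently, analyticity of inversion on the open set of isomorphisms, which is what the paper effectively exploits through the map $\varGamma^i$) to get analytic dependence of the pulled-back velocity and pressure, and express each entry of $\mathbb M(\mathbf c)$ and $\mathbb N(\mathbf c)$ as an integral over the fixed domain of an analytic expression. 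One small inaccuracy to flag: the pulled-back boundary data $\mathbf V_j|_\Sigma$ for $\hat{\mathbf w}_j$ \emph{do} depend on $\mathbf c$ (the $\mathbf V_j$ are components of $c$), but that dependence is linear, so the conclusion is unaffected.
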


Let us begin with a preliminary lemma of which the statement requires introducing some material.
Thus, we denote $F:=\mathbf R^3\setminus \bar B$ (remember that $B$ is the unit ball, $\Sigma:=\partial B$ and $\mathbf n$ is the unit normal to $\Sigma$ directed toward the interior of $B$). For all  $\xi\in D^1_0(\mathbf R^3)$, we set $\varXi:={\rm Id}+\xi$, $\mathcal B_\xi:=\varXi(B)$, $\mathcal F_\xi:=\varXi(F)$ and $\Sigma_\xi:=\varXi(\Sigma)$. We denote $\mathbf q:=(\xi,\mathcal W)$, with $\mathcal W:=(\mathbf W^1,\mathbf W^2)\subset (C^1_0(\mathbf R^3)^3)^2$, the elements of $\mathcal Q:=D^1_0(\mathbf R^3)\times(C^1_0(\mathbf R^3)^3)^2 $ and $\mathbf w^i_\xi:=\mathbf W^i(\varXi^{-1})$ ($i=1,2$). 
Finally, for every $\mathbf q\in\mathcal Q$, we define:
\begin{equation}
\label{def:M}
\varPhi(\mathbf q):=\int_{\mathcal F_\xi}D(\mathbf u^1_{\mathbf q}): D(\mathbf u^2_{\mathbf q})\,{\rm d}x,
\end{equation}
where, for every $i=1,2$, there exists a function $p^i_{\mathbf q}\in L^2(\mathcal F_\xi)$ such that the pair $(\mathbf u^i_{\mathbf q},p^i_{\mathbf q})\in (W^1_0(\mathcal F_\xi))^3\times L^2(\mathcal F_\xi)$ solves 
the Stokes system:
 \begin{subequations}
 \label{stokes_system:1}
\begin{alignat}{3}
-\Delta \mathbf u_{\mathbf q}^i+\nabla p^i_{\mathbf q}&=\mathbf 0&\quad&\text{ in }\mathcal F_\xi,\\
\nabla\cdot \mathbf u^i_{\mathbf q}&=0&&\text{ in }\mathcal F_\xi,\label{stokes_sys22}\\
\mathbf u^i_{\mathbf q}&=\mathbf w^i_\xi&&\text{ on }\Sigma_\xi.\label{stokes_sys23}
\end{alignat}
\end{subequations}
The first equation has to be understood in the weak sense, namely:
\begin{equation}
\label{varia:1}
\int_{\mathcal F_\xi}\!\!\!\nabla \mathbf u^i_{\mathbf q}:\nabla \mathbf v\,{\rm d}x-\int_{\Sigma_\xi}\!\!\!p^i_{\mathbf q}(\nabla\cdot \mathbf v)\,{\rm d}x=\mathbf 0,
\qquad \forall\,\mathbf v\in  (\stackrel{\circ}{W^1_0}(\mathcal F_\xi))^3.
\end{equation}
Recall that the function spaces are defined in the Appendix, Section~\ref{SEC:diffeo}.
\begin{lemma}
\label{lemma:import}
The mapping $\mathbf q\in\mathcal Q\mapsto \varPhi(\mathbf q)\in\mathbf R$ is analytic.
\end{lemma}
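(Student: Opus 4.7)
The plan is to reduce the problem to a fixed reference domain by pulling back via $\varXi$, show that the resulting Stokes-type system depends analytically on the parameter $\xi$, and conclude by the analytic implicit function theorem.

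First, I would use the diffeomorphism $\varXi := \mathrm{Id} + \xi$ to pull back the Stokes problem to the fixed exterior domain $F$. Setting $\tilde{\mathbf u}^i := \mathbf u^i_{\mathbf q} \circ \varXi$ and $\tilde p^i := p^i_{\mathbf q} \circ \varXi$, the boundary condition \eqref{stokes_sys23} becomes the $\xi$-independent Dirichlet condition $\tilde{\mathbf u}^i = \mathbf W^i$ on $\Sigma$, and the variational formulation \eqref{varia:1} transforms into a problem on $F$ whose coefficients are polynomial expressions in $(D\varXi)^{-1}$ and $\det D\varXi$. The key functional-analytic fact is that the pull-back $\mathbf u \mapsto \mathbf u \circ \varXi$ induces a topological isomorphism from $(W^1_0(\mathcal F_\xi))^3$ onto $(W^1_0(F))^3$ that depends analytically on $\xi$; this uses crucially that $\xi \in D^1_0(\mathbf R^3)$ vanishes at infinity, so the weights at infinity are preserved up to uniformly equivalent norms, and that the map $\xi \mapsto (D\varXi)^{-1}$ is analytic from $D^1_0(\mathbf R^3)$ to the space of bounded continuous matrix fields (matrix inversion being analytic on the open set of invertible matrices).

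Second, I would fix a continuous right inverse of the trace to lift $\mathbf W^i|_\Sigma$ into $(W^1_0(F))^3$, and recast the pulled-back problem as an equation $\mathcal G(\mathbf q, \tilde{\mathbf u}^i, \tilde p^i) = 0$ with $\mathcal G$ analytic from $\mathcal Q \times (W^1_0(F))^3 \times L^2(F)$ into the dual of the test space. At any point $\mathbf q_0 \in \mathcal Q$, the partial differential of $\mathcal G$ with respect to $(\tilde{\mathbf u}^i, \tilde p^i)$ is (up to pull-back) the classical exterior Stokes operator on $\mathcal F_{\xi_0}$, which is an isomorphism by the well-posedness of the exterior Stokes system in the weighted Sobolev spaces recalled in Section~\ref{SEC:Stokes}. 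The analytic implicit function theorem in Banach spaces \cite{Whittlesey:1965aa} then yields that the solution map $\mathbf q \mapsto (\tilde{\mathbf u}^i, \tilde p^i)$ is analytic from $\mathcal Q$ into $(W^1_0(F))^3 \times L^2(F)$ for $i=1,2$.

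Third, a change of variables transforms the integral \eqref{def:M} into
$$\varPhi(\mathbf q) = \int_F \bigl[\text{a polynomial expression in } \nabla \tilde{\mathbf u}^1,\, \nabla \tilde{\mathbf u}^2,\, (D\varXi)^{-1},\, \det D\varXi\bigr]\,\mathrm{d}y,$$
which exhibits $\varPhi$ as a composition of analytic maps with a continuous bilinear operation on $(W^1_0(F))^3 \times (W^1_0(F))^3$, hence analytic.

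The main obstacle I expect is the first step: verifying that the pull-back operation is a well-defined analytic isomorphism between the weighted Sobolev spaces $W^1_0(\mathcal F_\xi)$ and $W^1_0(F)$, with coefficients depending analytically on $\xi$. The weight at infinity in the definition of $W^1_0$ couples non-trivially with the change of variables, so one must exploit that $\xi$ and its derivative decay at infinity (so that $\varXi$ is asymptotic to the identity, with a tail controlled uniformly on bounded subsets of $D^1_0(\mathbf R^3)$) in order to get uniform equivalence of the weighted norms, and to recognize all pulled-back coefficients as genuinely analytic functions of $\xi$ valued in the appropriate space of multipliers.
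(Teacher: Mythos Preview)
Your proposal is correct and follows essentially the same route as the paper: pull back the Stokes system to the fixed exterior domain $F$ via $\varXi$, show that the resulting operator depends analytically on $\xi$ (through the matrices $\mathbb A_\xi=(\nabla\varXi^\ast\nabla\varXi)^{-1}J_\xi$ and $\mathbb B_\xi=(\nabla\varXi^\ast)^{-1}J_\xi$), invoke the analytic implicit function theorem using the well-posedness result of Proposition~\ref{prop:change_of_variables} to get analyticity of $\mathbf q\mapsto(\mathbf U^i_{\mathbf q},P^i_{\mathbf q})$, and then express $\varPhi(\mathbf q)$ as an integral over $F$ of a polynomial in $\nabla\mathbf U^i_{\mathbf q}$, $(\nabla\varXi)^{-1}$, $J_\xi$. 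The only cosmetic difference is that the paper keeps the trace condition $\gamma_\Sigma(\mathbf U-\mathbf W^i)$ as a third component of the map $\varGamma^i$ rather than lifting the boundary datum beforehand; your concern about the weighted spaces under pull-back is exactly what the paper packages into Proposition~\ref{prop:change_of_variables}.
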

\begin{proof}
We pull back equality \eqref{varia:1} onto the domain $F$ using the diffeomorphism $\varXi$. We get:
\begin{subequations}
\begin{equation}
\int_{ F}\!\!\nabla \mathbf U^i_{\mathbf q}\mathbb A_\xi: \nabla \mathbf V\,{\rm d}x-
\int_{F}\!\!P^i_{\mathbf q}\mathbb B_\xi:\nabla \mathbf V\,{\rm d}x=\mathbf 0,\qquad\forall\,\mathbf V\in  (\stackrel{\circ}{W^1_0}(F))^3,
\end{equation}
where $\mathbf U^i_{\mathbf q}:=\mathbf u^i_{\mathbf q}\circ\varXi$, $P^i_{\mathbf q}:=p^i_{\mathbf q}\circ\varXi$, $J_\xi:=\det(\nabla\varXi)$,
$\mathbb A_\xi:=(\nabla\varXi^\ast\nabla\varXi)^{-1}J_\xi$ and $\mathbb B_\xi:=(\nabla\varXi^\ast)^{-1}J_\xi$. Likewise, (\ref{stokes_sys22}-\ref{stokes_sys23}) can be turned into:
\begin{alignat}{3}
\mathbb B_\xi:\nabla \mathbf U^i_{\mathbf q}&=0,&\quad&\text{in }F,\\
\mathbf U^i_{\mathbf q}&=\mathbf W^i&&\text{on }\Sigma.
\end{alignat}
\end{subequations}
 We now claim that the mapping $\xi\in D^1_0(\mathbf R^3)\mapsto \mathbb A_\xi-{\rm Id} \in E^0_0(\mathbf R^3,{\rm M}(3))$ is analytic. Indeed, the mappings $\xi\in D^1_0(\mathbf R^3)\mapsto \nabla\varXi^\ast\nabla\varXi-{\rm Id}\in E^0_0(\mathbf R^3,{\rm M}(3))$, $A\in E^0_0(\mathbf R^3,{\rm M}(3))\mapsto ({\rm Id}+A)^{-1}-{\rm Id}\in E^0_0(\mathbf R^3,{\rm M}(3))$ and $\xi\in D^1_0(\mathbf R^3)\mapsto J_\xi-1\in C^0_0(\mathbf R^3)$  are analytic. Then, for $i=1,2$, we define the analytic  functions $\varGamma^i:\mathcal Q\times (W^1_0(F))^3\times L^2(F)\to (W^{-1}_0(F))^3\times L^2(F)\times (H^{1/2}(\Sigma))^3$ by:
$$
\varGamma^i(\mathbf q,\mathbf U,P):=\begin{pmatrix}
\langle \mathbb A_\xi,\mathbf U,\cdot\rangle-\langle \mathbb B_\xi,P,\cdot\rangle\\
\mathbb B_\xi:\nabla \mathbf U\\
\gamma_{\Sigma}(\mathbf U-\mathbf W^i)
\end{pmatrix},$$
where $\gamma_{\Sigma}:(W^1(F))^3\to(H^{1/2}(\Sigma))^3$ is the trace operator and
\begin{alignat*}{3}
\langle \mathbb A_\xi,\mathbf U,\mathbf V\rangle&:=\int_{\mathcal F}\nabla \mathbf U\mathbb A_\xi: \nabla \mathbf V\,{\rm d}x,&\quad&(\mathbf U\in(W^1(F))^3,\,\mathbf V\in(W^1_0( F))^3),\\
\langle \mathbb B_\xi,P,\mathbf V\rangle&:=\int_{\mathcal F}P\mathbb B_\xi:\nabla \mathbf V\,{\rm d}x,&&(P\in L^2(F),\,\mathbf V\in(W^1_0(F))^3).
\end{alignat*}
We wish now to apply the implicit function theorem (analytic version in Banach spaces, as stated in \cite{Whittlesey:1965aa}) to the analytic function $\varGamma^i$. Observe however that we are only interested in the regularity result. Indeed, according to Proposition~\ref{prop:change_of_variables}, we already know that for all $i=1,2$ and all $\mathbf q\in\mathcal Q$, there exists a unique pair $(U^i_{\mathbf q},P^i_{\mathbf q})\in (W^1_0(F))^3\times L^2(F)$ such that $\varGamma^i(\mathbf q,\mathbf U^i_{\mathbf q},P^i_{\mathbf q})=\mathbf 0$. For every $\mathbf q\in\mathcal Q$,
the partial derivative $\partial_{(U,P)} \varGamma^i(\mathbf q,U^i_{\mathbf q},P^i_{\mathbf q})$ can be readily computed. Indeed, we have:
\begin{equation}
\label{riez}
\langle \partial_{(U,P)} \varGamma^i(\mathbf q,U^i_{\mathbf q},P^i_{\mathbf q}),(\boldsymbol\chi,\pi)\rangle=
\begin{pmatrix}
\langle \mathbb A_\xi,\boldsymbol\chi,\cdot\rangle-\langle \mathbb B_\xi,\pi,\cdot\rangle\\
\mathbb B_\xi:\nabla \boldsymbol\chi\\
\gamma_{\Sigma}(\boldsymbol\chi)
\end{pmatrix},\quad\forall\,(\boldsymbol\chi,\pi)\in (W^1_0(F))^3\times L^2(\Sigma).
\end{equation}
Let $(\mathbf f,\eta,\mathbf g)$ be any element of $(W^{-1}_0(F))^3\times L^2(F)\times (H^{1/2}(F))^3$. The equation 
$\langle \partial_{(U,P)} \varGamma^i(\mathbf q,(U^i_{\mathbf q},P^i_{\mathbf q}),(\boldsymbol\chi,\pi)\rangle=(\mathbf f,\eta,\mathbf g)$,
is equivalent to:
%\begin{subequations}
\begin{align*}
\int_{F}\!\nabla \boldsymbol\chi\mathbb A_\xi : \nabla \mathbf V\,{\rm d}x-
\int_{F}\!\pi\,\mathbb B_\xi:\nabla \mathbf V\,{\rm d}x&=\langle \mathbf f,\mathbf V\rangle_{(W^{-1}_0(F))^3\times(\stackrel{\circ}{W^1_0}(F))^3},\quad \forall\,\mathbf V\in (\stackrel{\circ}{W^1_0}(F))^3,\\
\mathbb B_\xi:\nabla\boldsymbol\chi&=\eta,\quad\text{in }F,\\
\boldsymbol\chi&=\mathbf g\quad\text{on }\Sigma.
\end{align*}
According to Proposition~\ref{prop:change_of_variables}, there exists a unique solution $(\boldsymbol\chi,\pi)\in(W^1_0(F))^3\times L^2(F)$ such that $\|\boldsymbol\chi\|_{(W^1_0(F))^3}+\|\pi\|_{L^2(F)}\leq C_\xi\big[\|\mathbf f\|_{(W^{-1}_0)^3}+\|\eta\|_{L^2(F)}+\|\mathbf g\|_{(H^{1/2}(\Sigma))^3}\big]$ where the constant $C_\xi>0$ depends on $\xi$ only. We infer that for every $\mathbf q\in\mathcal Q$, $\partial_{(U,P)} \varGamma^i(\mathbf q,U^i_{\mathbf q},P^i_{\mathbf q})$ is a continuous isomorphism from $(W^1_0(F))^3\times L^2(F)$ onto $(W^{-1}_0(F))^3\times L^2(F)\times (H^{1/2}(\Sigma))^3$.
The implicit function theorem applies and asserts that the mappings $\mathbf q\in\mathcal Q\mapsto (\mathbf U^i_{\mathbf q},P^i_{\mathbf q})\in (W^1_0(F))^3\times L^2(F)$ ($i=1,2$) are analytic.

To conclude the proof, it remains only to observe that the function $\varPhi(\mathbf q)$ introduced in \eqref{def:M} can be rewritten, upon a change of variables as $$\varPhi(\mathbf q)=\frac{1}{4}\int_{F}(\nabla U_{\mathbf q}^1\nabla\varXi^{-1}+(\nabla U_{\mathbf q}^1\nabla\varXi^{-1})^\ast):(\nabla U_{\mathbf q}^2\nabla\varXi^{-1}+(\nabla U_{\mathbf q}^2\nabla\varXi^{-1})^\ast)J_\xi\,{\rm d}x,$$ 
which is analytic as a composition of analytic functions.
\end{proof}

We can now give the proof of Theorem~\ref{theorem:2}.
\begin{proof}
For any $\mathbf c:=(c,s)\in\mathcal C_F(n)$, where $c:=(\vartheta,\mathcal V)$, we apply the lemma with $\xi:=\vartheta+\sum_{i=1}^ns_i\mathbf V_i$ and $\mathbf W^1,\mathbf W^2\in\{\mathbf e_i\times\varXi,\, \mathbf e_i,\,i=1,2,3\}$ to get that the mapping $\mathbf c\in\mathcal C_F(n)\mapsto \mathbb M(\mathbf c)\in {\rm M}(6)$ is analytic. %Notice that unlike in the statement of the Lemma, the vector fields $\mathbf W^i$ depend (analytically) on $\xi$. We denote them $\mathbf W^i_\xi$ and we obtain that the mapping $\xi\in D^1_0(\mathbf R^3)\mapsto \varPhi((\mathbf W^1_\xi,\mathbf W^2_\xi),\xi)\in\mathbf R$ is analytic as a composition of analytic functions.   
To prove the analyticity of the elements of $\mathbb N(\mathbf c)$, we apply the lemma again with $\xi:=\vartheta+\sum_{i=1}^ns_i\mathbf V_i$, $\mathbf W^1\in \{ \mathbf e_i\times\varXi,\, \mathbf e_i,\,i=1,2,3\}$ and $\mathbf W^2\in\{\mathbf V_1,\ldots,\mathbf V_n\}$. 
\end{proof}
%================================================
\section{Control Problem}

\label{SEC:control_problem}
\subsection{Controllable swimmer signature}
Let us fix $c\in{\mathcal C}(n)$ (for some positive integer $n$) and recall that $\mathcal S(c)$ is the connected open subspace of $\mathbf R^n$ such that $(c,s)\in\mathcal C_F(n)$.  Introducing $(\mathbf f_1,\ldots,\mathbf f_n)$ an ordered orthonormal basis of $\mathbf R^n$, we can seek the function $t\in[0,T]\mapsto s(t)\in\mathcal S(c)$ as the solution of the ODE $\dot s(t)=\sum_{i=1}^n\lambda_i(t)\mathbf f_i$ where the functions $\lambda_i:t\in[0,T]\mapsto \lambda_i(t)\in\mathbf R$ are the new controls, and rewrite once more the dynamics \eqref{dynamics:1} as:
%\begin{subequations}
%\label{main_dynamics}
\begin{equation}
\label{dynamics:2}
\begin{pmatrix}\boldsymbol\Omega\\
\mathbf v\\
\dot s\end{pmatrix}=\sum_{i=1}^n\lambda_i(t)
\begin{pmatrix}-\mathbb M(c,s)^{-1}\mathbb N(c,s)\mathbf f_i\\
\mathbf f_i\end{pmatrix}, \qquad (0<t<T).
\end{equation}
It is worth remarking that in this form, $s$ is no more the control but a state variable and $c\in\mathcal C(n)$ is a parameter of the dynamics. 
Considering \eqref{dynamics:2}, we are quite naturally led to introduce, for all $\mathbf c\in\mathcal C_F(n)$, the vector fields
$\mathbf X_i(\mathbf c):=-\mathbb M(\mathbf c)^{-1}\mathbb N(\mathbf c)\mathbf f_i\in\mathbf R^6$, $\mathbf Y_i(\mathbf c):=(\hat{\mathbf X}^1_i(\mathbf c)
,
{\mathbf X}^2_i(\mathbf c),
\mathbf f_i)^\ast\in T_{\rm Id}{\rm SO}(3)\times\mathbf R^3\times \mathbf R^n$ (we have used here the notation $\mathbf X_i:=(\mathbf X_i^1,\mathbf X_i^2)^\ast\in\mathbf R^3\times\mathbf R^3$) and 
$\mathbf Z_c^i(R,s):=\mathcal R_R\mathbf Y_i(\mathbf c)\in T_R{\rm SO}(3)\times\mathbf R^3\times \mathbf R^n$
where $\mathcal R_R:={\rm diag}(R,R,{\rm Id})\in{\rm SO}(6+n)$ is a bloc diagonal matrix.
The dynamics \eqref{dynamics:2} and the ODE \eqref{complement} can be gathered into a unique differential system:
\begin{equation}
\label{dynamics:3}
\frac{d}{dt}\begin{pmatrix}R\\
\mathbf r\\
s\end{pmatrix}=\sum_{i=1}^n\lambda_i(t)\mathbf Z^i_c(R,s),\qquad(0<t< T).
\end{equation}
For every $i=1,\ldots,n$, the function $(R,\mathbf r,s)\in{\rm SO}(3)\times\mathbf R^3\times\mathcal S(c)\mapsto \mathbf Z^i_c(R,s)\in T_R{\rm SO}(3)\times\mathbf R^3\times \mathbf R^n$ can be seen as an analytic vector field (constant in $\mathbf r$) on the analytic connected manifold $\mathcal M(c):={\rm SO}(3)\times\mathbf R^3\times\mathcal S(c)$. We denote $\zeta$ any element $(R,\mathbf r,s)\in\mathcal M(c)$ and we define $\mathcal Z(c)$ as the family of vector fields $(\mathbf Z^i_c)_{1\leq i\leq n}$ on $\mathcal M(c)$.  
\begin{lemma}Let $c$ be a SS fixed in $\mathcal C(n)$ ($n$ a positive integer). 
If there exists $\zeta\in\mathcal M(c)$ such that ${\rm dim}\,{\rm Lie}_{\zeta}\mathcal Z(c)=6+n$, then the orbit of $\mathcal Z(c)$ through any $\zeta\in\mathcal M(c)$ is equal to the whole manifold $\mathcal M(c)$.
\end{lemma}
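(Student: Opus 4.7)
The plan is to combine Sussmann's orbit theorem in its analytic refinement due to Nagano with the left $SE(3)$-equivariance of the control vector fields $\mathbf Z^i_c$ and the free controllability of the shape variable $s$. A Lie rank condition at a single point is not, in general, enough to force every orbit of a family of analytic vector fields to be the whole manifold (there are simple planar counterexamples with analytic, even nowhere vanishing fields); so the geometric symmetry of the swimmer dynamics must be used to upgrade the pointwise rank condition to a full transversal, after which a connectedness argument closes the proof.

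I would first check directly from $\mathbf Z^i_c(R,s)=\mathcal R_R\mathbf Y_i(c,s)$ that, for every $(Q,\mathbf b)\in\mathrm{SO}(3)\times\mathbf R^3$, the diffeomorphism $L_{(Q,\mathbf b)}:(R,\mathbf r,s)\mapsto(QR,Q\mathbf r+\mathbf b,s)$ of $\mathcal M(c)$ satisfies $(L_{(Q,\mathbf b)})_\ast\mathbf Z^i_c=\mathbf Z^i_c$: this is immediate since the differential of $L_{(Q,\mathbf b)}$ acts as left multiplication by $Q$ on the first two factors and as the identity on the last, while the $\mathbf r$-variable does not appear in $\mathbf Z^i_c$ and the SO(3)-block of $\mathcal R_R$ is itself left multiplication by $R$. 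Consequently $dL_{(Q,\mathbf b)}$ carries $\mathrm{Lie}_\zeta\mathcal Z(c)$ isomorphically onto $\mathrm{Lie}_{L_{(Q,\mathbf b)}\zeta}\mathcal Z(c)$, so the function $\zeta\mapsto\dim\mathrm{Lie}_\zeta\mathcal Z(c)$ depends only on the $s$-component. Writing $\zeta_0=(R_0,\mathbf r_0,s_0)$ for the distinguished point, the hypothesis therefore gives $\dim\mathrm{Lie}_{(R,\mathbf r,s_0)}\mathcal Z(c)=6+n$ for \emph{every} $(R,\mathbf r)\in\mathrm{SO}(3)\times\mathbf R^3$.

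Next, for an arbitrary $\zeta=(R,\mathbf r,s)\in\mathcal M(c)$, since $\mathcal S(c)$ is an open connected subset of $\mathbf R^n$ I pick a piecewise linear path $\gamma:[0,1]\to\mathcal S(c)$ from $s$ to $s_0$ and set the controls $\lambda_i$ by $\sum_{i=1}^n\lambda_i(t)\mathbf f_i=\dot\gamma(t)$. Feeding this into \eqref{dynamics:3} with initial datum $\zeta$ produces, by standard ODE theory (or Proposition~\ref{existence} applied to the associated control $\vartheta_t$), an absolutely continuous trajectory ending at some $\eta=(R',\mathbf r',s_0)\in\mathcal O_\zeta$. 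By Theorem~\ref{theorem:2}, and using that the symmetric positive definite matrix $\mathbb M(\mathbf c)$ depends analytically on $\mathbf c$ so that $\mathbb M(\mathbf c)^{-1}$ does too, the vector fields $\mathbf Z^i_c$ are real analytic on the real analytic manifold $\mathcal M(c)$; hence Nagano's refinement of the orbit theorem gives $T_\eta\mathcal O_\eta=\mathrm{Lie}_\eta\mathcal Z(c)$. By the previous step this tangent space has dimension $6+n=\dim\mathcal M(c)$, which makes $\mathcal O_\eta$ an open submanifold of $\mathcal M(c)$; and since $\eta\in\mathcal O_\zeta$ the two orbits coincide, so $\mathcal O_\zeta$ is open.

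Since $\zeta$ was arbitrary, every orbit of $\mathcal Z(c)$ is open in $\mathcal M(c)$. Orbits partition $\mathcal M(c)=\mathrm{SO}(3)\times\mathbf R^3\times\mathcal S(c)$ into open sets, and this ambient manifold is connected as a product of three connected factors; the partition must therefore be trivial, which is exactly the claim that the orbit through any $\zeta$ is $\mathcal M(c)$. The step on which everything hinges, and the only genuinely subtle one, is the $SE(3)$-equivariance: without it (or some equivalent structural input), analytic vector fields with full Lie rank at a single point can very well produce lower-dimensional orbits through other points, so it is precisely this symmetry of the swimmer model that lets Nagano's theorem turn the single-point rank hypothesis into a global controllability conclusion.
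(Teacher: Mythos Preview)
Your proof is correct and follows the same structural skeleton as the paper's: (i) the Lie rank depends only on the shape variable $s$; (ii) from any point one can steer $s$ to the distinguished value $s_0$ while remaining in the same orbit; (iii) an orbit-theorem argument closes. The implementations differ, however, and yours is arguably cleaner. The paper establishes (i) by explicitly computing the first-order Lie brackets \eqref{identity:lie_brackets} and arguing by induction that every iterated bracket has the form $\mathcal R_R\cdot(\text{something depending only on }s)$; you obtain the same conclusion at once from the $SE(3)$-left-invariance of the fields $\mathbf Z^i_c$, which is both shorter and more conceptual. For the endgame, the paper combines the Orbit Theorem (in its analytic form, so that the Lie rank is constant along orbits) with the free reachability of $\mathcal S(c)$ to deduce full rank \emph{everywhere}, and then applies Chow--Rashevsky; you instead invoke Nagano directly at the point $\eta$ to get that each orbit is open, and finish by connectedness. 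Both routes rely on analyticity in the same essential place (tangent space to the orbit equals the Lie algebra), but yours avoids the separate appeal to Chow.
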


\begin{proof}
Rashevsky Chow Theorem (see \cite{Agrachev:2004aa}) applies: If ${\rm Lie}_{\zeta}\mathcal Z(c)=T_{\zeta}\mathcal M(c)$ for all $\zeta\in\mathcal M(c)$ (or more precisely, for all $(R,s)\in{\rm SO}(3)\times\mathcal S(c)$ since $\mathbf Z^i_c$ does not depend on $\mathbf r$) then the orbit of $\mathcal Z(c)$ through any point of $\mathcal M(c)$ is equal to the whole manifold. 
Let us compute the Lie bracket $[\mathbf Z^i_c(R,s),\, \mathbf Z^j_c(R,s)]$ for $1\leq i,j\leq n$ and $(R,s)\in{\rm SO}(3)\times\mathcal S(c)$. We get:
\begin{equation}
\label{identity:lie_brackets}
[\mathbf Z^i_c(R,s),\, \mathbf Z^j_c(R,s)]=\mathcal R_R\begin{pmatrix}
\widehat{(\mathbf X_i^1\times\mathbf X_j^1)}(\mathbf c)\\
({\mathbf X}^1_i\times\mathbf X_j^2-{\mathbf X}^1_j\times\mathbf X_i^2)(\mathbf c)\\
\mathbf 0
\end{pmatrix}
+\mathcal R_R\begin{pmatrix}
\widehat{(\partial_{s_i}{\mathbf X}^1_j-\partial_{s_j}{\mathbf X}_i^1)}(\mathbf c)\\
(\partial_{s_i}{\mathbf X}^2_j-\partial_{s_j}{\mathbf X}_i^2)(\mathbf c)\\
\mathbf 0
\end{pmatrix}.
\end{equation}
By induction, we can similarly prove that the Lie brackets of any order at any point $\zeta\in\mathcal M(c)$ have the same general form, namely the matrix $\mathcal R_R$ multiplied by an element of $T_{({\rm Id},\mathbf 0,s)}\mathcal M(c)$. We deduce that the dimension of the Lie algebra at any point of $\mathcal M(c)$ depends only on $s$. According to the Orbit Theorem (see \cite{Agrachev:2004aa}), the dimension of the Lie algebra is constant along any orbit. But according to the particular form of the vector fields $\mathbf Z_c^i$ (whose last $n$ components form a basis of $\mathbf R^n$), the projection of any orbit on $\mathcal S(c)$ turns out to be the whole set $\mathcal S(c)$ (or, in other words, for any $s\in\mathcal S(c)$ and for any orbit, there is a point of the orbit for which the last component is $s$). Assume now that ${\rm dim}\,{\rm Lie}_{\zeta^\ast}\mathcal Z(c)=6+n$ at some particular point $\zeta^\ast:=(R^\ast,\mathbf r^\ast,s^\ast)\in\mathcal M(c)$. Then, according to the Orbit Theorem, for any $s\in\mathcal S(c)$, there exists at least one point $(R_s,\mathbf r_s,s)\in\mathcal M(c)$ such that  ${\rm dim}\,{\rm Lie}_{(R_s,\mathbf r_s,s)}\mathcal Z(c)=6+n$. But since the dimension of the Lie algebra does not depend on the variables $R$ and $\mathbf r$, we conclude that ${\rm dim}\,{\rm Lie}_{\zeta}\mathcal Z(c)=6+n$ for all $\zeta\in\mathcal M(c)$.
\end{proof}

\begin{definition}
\label{def:cont:SS}
We say that $c$, a SS in $\mathcal C(n)$ (for some integer $n$) is controllable if there exists $\zeta\in\mathcal M(c)$ such that ${\rm dim}\,{\rm Lie}_{\zeta}\mathcal Z(c)=6+n$.
\end{definition}

It is obvious that for a SS to be controllable, the integer $n$ has to be larger or equal to 2. 
The following result is quite classical (a proof can be found in \cite{Chambrion:2011aa}):
\begin{proposition}
\label{prop:prop12}
Let $c\in \mathcal C(n)$ (for some integer $n$) be controllable (with the usual notation $c:=(\vartheta,\mathcal V)$, $\mathcal V:=(\mathbf V_1,\ldots,\mathbf V_n)$ and $\vartheta_s:=\vartheta+\sum_{i=1}^ns_i\mathbf V_i$ for every $s\in\mathcal S(c)$). Then for any given continuous function $t\in[0,T]\mapsto (\bar R(t),\bar{\mathbf r}(t),\bar s(t))\in {\rm SO}(3)\times\mathbf R^3\times \mathcal S(c)$ and for any $\varepsilon>0$, there exist $n$ $C^1$ functions $\lambda_i:[0,T]\to\mathbf R$ ($i=1,\ldots,n$) such that:
\begin{enumerate}
\item $\sup_{t\in[0,T]}\Big(\| \bar R(t)-R(t)\|_{{\rm M}(3)}+\|\bar{\mathbf r}(t)-\mathbf r(t)\|_{\mathbf R^3}+\|\vartheta_{\bar s(t)}-\vartheta_{s(t)}\|_{C^1_0(\mathbf R^3)^3}\Big)<\varepsilon;$
\item $R(T)=\bar R(T)$, $\mathbf r(T)=\bar{\mathbf r}(T)$ and $s(T)=\bar s(T)$; 
\end{enumerate}
where $t\in[0,T]\mapsto(R(t),\mathbf r(t),s(t))\in\mathcal M(c)$ is the unique solution to the ODE \eqref{dynamics:3} with Cauchy data $R(0)=\bar R(0)\in{\rm SO}(3)$, $\mathbf r(0)=\bar{\mathbf r}(0)\in\mathbf R^3$, $s(0)=\bar s(0)\in\mathcal S(c)$.
\end{proposition}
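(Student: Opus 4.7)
The plan is to combine the preceding lemma (which, for a controllable SS $c$, identifies the orbit of $\mathcal Z(c)$ with the whole manifold $\mathcal M(c)$) with a standard tracking argument from geometric control theory. Since the system \eqref{dynamics:3} is driftless and control-affine, if $\lambda=(\lambda_1,\ldots,\lambda_n)$ is an admissible control, so is $-\lambda$; hence the reachable set from any point coincides with its orbit, which by the preceding lemma is all of $\mathcal M(c)$. In particular, any two points of $\mathcal M(c)$ can be joined by a trajectory driven by piecewise constant controls in a prescribed (arbitrarily short) time.

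First I would fix $\varepsilon>0$ and use the uniform continuity of $\bar\zeta:=(\bar R,\bar{\mathbf r},\bar s)$ together with the continuity of the embedding $s\mapsto \vartheta+\sum_i s_i\mathbf V_i$ from $\mathcal S(c)$ into $D_0^1(\mathbf R^3)$ to choose $\delta=\delta(\varepsilon)>0$ and a partition $0=t_0<t_1<\cdots<t_N=T$ fine enough that the four quantities $\|\bar R(t)-\bar R(t_k)\|$, $\|\bar{\mathbf r}(t)-\bar{\mathbf r}(t_k)\|$, $\|\bar s(t)-\bar s(t_k)\|$ and $\|\vartheta_{\bar s(t)}-\vartheta_{\bar s(t_k)}\|_{C^1_0}$ are all less than $\varepsilon/4$ on each $[t_k,t_{k+1}]$. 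The intermediate points $\bar\zeta(t_k)$ are the waypoints that the actual trajectory must pass through (the last one, $\bar\zeta(T)$, will take care of requirement~2).

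Next, on each subinterval I would construct controls steering the system from the current state $\zeta(t_k)$ (which, inductively, is assumed $\varepsilon/4$-close to $\bar\zeta(t_k)$) exactly to $\bar\zeta(t_{k+1})$ within time $t_{k+1}-t_k$. By Chow's theorem and the driftless structure, such piecewise constant controls exist. Moreover, by time-rescaling and by inserting short back-and-forth segments along the bracket-generating vector fields, one can arrange the intermediate trajectory to remain inside an arbitrarily small neighborhood of the straight segment joining $\bar\zeta(t_k)$ to $\bar\zeta(t_{k+1})$ in $\mathcal M(c)$; this is the standard ``slow-fast'' construction (see e.g. \cite{Agrachev:2004aa}). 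Refining the partition further if needed, this neighborhood is contained in the $\varepsilon/2$-ball around $\bar\zeta(t)$ for every $t\in[t_k,t_{k+1}]$, yielding the estimate in item~1; item~2 is automatic since we steer \emph{exactly} to $\bar\zeta(T)$ at the last step.

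Finally, to upgrade piecewise constant controls to $C^1$ controls, I would mollify the $\lambda_i$ by convolution against a smooth kernel of small support, and invoke the continuous dependence of the flow on the control stated in Proposition~\ref{existence} (together with the analyticity of $\mathbf c\mapsto\mathbb M(\mathbf c)^{-1}\mathbb N(\mathbf c)$ from Theorem~\ref{theorem:2}, which guarantees Lipschitz dependence of the vector fields on $s$ on any compact subset of $\mathcal S(c)$). The mollification perturbs the trajectory by an amount that tends to zero with the support of the kernel, so one recovers the same $\varepsilon$-closeness, and a final small open-loop correction on a tiny terminal subinterval (again possible by controllability) restores the exact terminal equality. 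The main obstacle is the tension between the three simultaneous requirements---uniform closeness, exact terminal condition, and $C^1$ regularity of the controls---which is precisely what the slow-fast Chow construction together with a final exact-reach correction is designed to resolve.
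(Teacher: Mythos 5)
The paper does not supply a proof of this proposition; it labels the result ``quite classical'' and refers the reader to \cite{Chambrion:2011aa}. There is therefore no in-text argument to match your sketch against. That said, the route you take---orbit equals manifold by the preceding lemma, symmetry of the driftless system so that the reachable set is the orbit, a waypoint decomposition based on uniform continuity, Chow--Rashevskii plus short excursions to stay in a small tube around the reference, and a smoothing step at the end---is exactly the kind of argument one would expect a ``classical'' proof in geometric control theory to use, and the main structural steps are sound.

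The genuine weak point is the final paragraph. You mollify the piecewise constant controls (fine, this perturbs the endpoint slightly) and then claim ``a final small open-loop correction on a tiny terminal subinterval (again possible by controllability) restores the exact terminal equality.'' This is circular as stated: if that correction is produced by Chow--Rashevskii it is again piecewise constant and needs mollifying, and the loop never closes. To break the circle one must argue that the endpoint can be hit \emph{exactly} by controls drawn from a finite-dimensional smooth family. The standard device is to parametrize the correction by a concatenation of flows $e^{\tau_1\mathbf Z^{i_1}_c}\circ\cdots\circ e^{\tau_m\mathbf Z^{i_m}_c}$ and to use that, by the controllability (bracket-generating) hypothesis, the map $\tau\mapsto e^{\tau_1\mathbf Z^{i_1}_c}\circ\cdots\circ e^{\tau_m\mathbf Z^{i_m}_c}(\zeta)$ is a submersion onto a neighborhood of $\zeta$ for a suitable choice of $m$, indices, and base point $\tau^0$ (this is essentially Krener's theorem, or the orbit theorem's proof). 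The implicit function theorem then delivers an exact correction by perturbing the finitely many real parameters $\tau_j$, and since these parameters enter the control in a $C^1$ (indeed analytic) way, the resulting control remains $C^1$. Without something of this flavour, item~2 of the proposition is not actually secured. The ``slow-fast'' / short back-and-forth argument for item~1 is acceptable as a sketch, provided you supplement it with the quantitative version of Chow (ball--box type estimate) guaranteeing that the excursion stays within a prescribed tube; citing a precise statement from \cite{Agrachev:2004aa} would tighten this.
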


Let us mention some other quite elementary properties that will be used later on:

\begin{proposition} 
\label{properties_of_SS}
\begin{enumerate}
\item If $c:=(\vartheta,\mathcal V)\in\mathcal C(n)$ ($n\geq 2)$ is a controllable SS with $\mathcal V:=(\mathbf V_1,\ldots,\mathbf V_n)\in(C^1_0(\mathbf R^3)^3)^n$ then any $c^+:=(\vartheta,\mathcal V^+)\in\mathcal C(n+1)$  such that $\mathcal V^+:=(\mathbf V_1,\ldots,\mathbf V_n,\mathbf V_{n+1})\in (C^1_0(\mathbf R^3)^3) ^{n+1}$ (for some $\mathbf V_{n+1}\in C^m_0(\mathbf R^3)^3$) is a controllable SS as well.
\item If $c:=(\vartheta,\mathcal V)\in\mathcal C(n)$ ($n\geq 2)$ is a controllable SS, then for any $\vartheta^\star\in\{\vartheta+\sum_{i=1}^ns_i\mathbf V_i,\,s\in\mathcal S(c)\}$ the element  $c^\star:=(\vartheta^\star,\mathcal V)$ belongs to  $\mathcal C(n)$ and is a controllable SS as well.
\item If $c:=(\vartheta,\mathcal V)\in\mathcal C(n)$ ($n\geq 2)$ is a controllable SS, then all of the controllable SS  in the section $\varPi^{-1}(\{\vartheta\})$  form an open dense subset of $\varPi^{-1}(\{\vartheta\})$ (for the induced topology).
\item If there exists a SS in $\mathcal C(n)$ for some $n\geq 2$ then, for any $k\geq n$, all of the controllable SS in $\mathcal C(k)$ form an open dense subset of $\mathcal C(k)$ (for the induced topology).
\end{enumerate}
\end{proposition}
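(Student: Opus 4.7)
My plan is to treat the four assertions in sequence, leveraging throughout the Lie-algebra rank characterization of controllability (Definition~\ref{def:cont:SS} and the lemma preceding it), the analytic dependence of $\mathbb{M}$ and $\mathbb{N}$ on the swimmer full signature proved in Theorem~\ref{theorem:2}, and the connectedness of the analytic submanifolds supplied by Theorem~\ref{theorem:1} and Corollary~\ref{cor:2}. Items (1) and (2) will be explicit comparisons between two dynamical systems; items (3) and (4) will be corollaries of the identity principle for scalar analytic functions on connected analytic Banach submanifolds.

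For (1), I would inspect the augmented dynamics at points $\zeta^+ := (R, \mathbf{r}, (s, 0)) \in \mathcal{M}(c^+)$ with vanishing last coordinate. There the shape is the same as in the $c$-model, hence $\mathbf{X}_i$ for $1 \leq i \leq n$ is unchanged, and the vector fields $\mathbf{Z}_{c^+}^1, \ldots, \mathbf{Z}_{c^+}^n$ are tangent to the invariant hyperplane $\{s_{n+1}=0\}$; their restrictions there coincide with $\mathbf{Z}_c^1, \ldots, \mathbf{Z}_c^n$. Iterated Lie brackets therefore restrict consistently, and by assumption they span a $(6+n)$-dimensional subspace at some $\zeta$. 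The extra generator $\mathbf{Z}_{c^+}^{n+1}$ carries $\mathbf{f}_{n+1}$ in its last slot, a direction absent from every preceding bracket, so the total rank at $\zeta^+$ equals $6+n+1$ and controllability of $c^+$ follows.

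For (2), the translation $s \mapsto s + s^\star$ realises a diffeomorphism of $\mathcal{M}(c^\star)$ onto $\mathcal{M}(c)$ that intertwines the matrices $\mathbb{M}$ and $\mathbb{N}$ and, consequently, the vector fields $\mathbf{Z}_{c^\star}^i$ with $\mathbf{Z}_c^i$; iterated Lie brackets at corresponding points span subspaces of the same dimension, so the controllability of $c$ passes to $c^\star$. For (4), one first extends a given controllable SS in $\mathcal{C}(n)$ to a controllable SS in $\mathcal{C}(k)$ by iterating (1) and adjoining any extra vector fields $\mathbf{V}_{n+1}, \ldots, \mathbf{V}_k \in C^1_0(\mathbf{R}^3)^3$ satisfying the finite-codimension constraints of Definition~\ref{def:sc} (always feasible in the infinite-dimensional ambient space, as in the path-connectedness construction used in Theorem~\ref{theorem:1}). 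The density and openness argument of (3) below, performed this time on the whole connected analytic manifold $\mathcal{C}(k)$, then delivers the conclusion.

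For (3), fix a controllable $c_0 \in \varPi^{-1}(\{\vartheta\})$ together with a witness $(\zeta_0, I_0, J_0)$ such that the $(6+n) \times (6+n)$ minor $J_0$ of the matrix formed by the iterated Lie brackets indexed by $I_0$, call it $\psi(c) := \psi_{I_0, J_0, \zeta_0}(c)$, is nonzero at $c_0$. By Theorem~\ref{theorem:2} and the composition rules for Banach-space-valued analytic maps (see \cite{Whittlesey:1965aa}), $\psi$ is analytic on the connected analytic Banach submanifold $\varPi^{-1}(\{\vartheta\})$ supplied by Corollary~\ref{cor:2}. Its non-vanishing at $c_0$ yields an open neighborhood of controllable SS, which gives openness; the identity principle on a connected analytic Banach submanifold forces the zero set of a not-identically-vanishing analytic function to be proper and nowhere dense, and the non-controllable SS lie inside that zero set, which gives density. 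The step I expect to demand the most care is precisely this density claim: one has to be sure that the identity principle genuinely applies to scalar analytic functions on these infinite-dimensional connected analytic Banach submanifolds. The analytic implicit function theorem invoked in Theorem~\ref{theorem:1} provides, around any point, an analytic Banach chart, and the uniqueness theorem of \cite{Whittlesey:1965aa} rules out a nontrivial analytic scalar vanishing on an open set of such a chart.
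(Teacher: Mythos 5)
Your proposal is correct and follows essentially the same route the paper takes. For items (3) and (4) you use exactly the paper's argument: regard the determinant of a $(6+n)$-tuple of iterated Lie brackets as an analytic scalar on the connected analytic Banach submanifold from Corollary~\ref{cor:2} (resp.\ Theorem~\ref{theorem:1}), note that it is not identically zero because it is nonzero at the controllable $c_0$, and invoke continuity for openness and the identity principle from \cite{Whittlesey:1965aa} for density. For items (1) and (2) the paper simply declares them obvious; your explicit reductions (tangency to the invariant hyperplane $\{s_{n+1}=0\}$ and the fact that only $\mathbf{Z}_{c^+}^{n+1}$ contributes the $\mathbf{f}_{n+1}$ direction for (1); the $s\mapsto s+s^\star$ translation intertwining the dynamics on $\mathcal M(c^\star)$ and $\mathcal M(c)$ for (2)) are correct and fill that gap. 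One small point worth making explicit: for $\psi_{I_0,J_0,\zeta_0}$ to be a globally defined analytic function on $\varPi^{-1}(\{\vartheta\})$ (and, in (4), on $\mathcal C(k)$), the base point $\zeta_0$ must lie in $\mathcal M(c)$ for \emph{every} $c$ in that manifold, which forces you to pick $\zeta_0$ with $s$-component equal to $0$; this is always possible because, by the orbit-theorem argument in the lemma preceding Definition~\ref{def:cont:SS}, the Lie rank is constant in $s$, so a witness at $s=0$ exists. The paper builds this in by fixing $s=0$ from the outset.
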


\begin{proof}
The two first assertions are obvious so let us address directly the third point. Denote $\mathcal E_k$ ($k$ positive integer) the set of all of the vectors fields on $\mathcal M(c)$ obtained as Lie brackets of order lower or equal to $k$ from elements of $\mathcal Z(c)$. Then, consider the determinants of all of the different families of $6+n$ elements of $\mathcal E_k$ as analytic functions in the variable $\mathcal V$ (the other variables $\vartheta$ and $s=0$ being fixed). Since $c$ is controllable, there exist at least one $k$ and one family of $6+n$ elements in $\mathcal E_k$ whose determinant is nonzero. According to Corollary~\ref{cor:2} and basic properties of analytic functions (see \cite{Whittlesey:1965aa}), the determinant can vanish only in a closed subset with empty interior of the section $\varPi^{-1}(\{\vartheta\})$ (for the induced topology). The proof of the last point is similar.
\end{proof}
%\begin{proposition}
%For any positive integer $n$ and for any $m=1,2$, there exist $c\in\mathcal C(n)$ that are not controllable.
%\end{proposition}
%\begin{proof}
%Let $\varrho=1$ and $\vartheta={\rm Id}$. It is always possible to build $n$ vectors fields $\mathbf V_i\in C^m_0(\mathbf R^3)^3$ ($i=1,\ldots,n$) such that they are compactly supported in $B$, their supports are two by two disjoint and they verify the symmetry property $\mathbf V_i(R\,x)=R\,\mathbf V_i(x)$ for any $R\in{\rm SO}(3)$. Then, the SS $c:=(\varrho,\vartheta,\mathcal V)\in\mathcal C(n)$ where $\mathcal V:=(\mathbf V_1,\ldots,\mathbf V_n)$ is non controllable. 
%\end{proof}
\subsection{Building a controllable swimmer signature}
In this subsection, we are interested in computing the Lie brackets of first order $[\mathbf Z_c^i(R,s),\,\mathbf Z_c^j(R,s)]$ at $(R,s)=({\rm Id},0)$, for a particular SS $c:=({\rm Id},\mathcal V)\in \mathcal C(4)$ (so the shape of the swimmer at rest is the unit ball). We make use of the usual notation $\mathcal V:=(\mathbf V_1,\ldots,\mathbf V_4)$ (to be specified latter on), $s=(s_1,\ldots,s_4)\in\mathcal S(c)$ and $\mathbf c:=(c,s)$. 

To carry out the aforementioned task, we introduce the classical spherical coordinates $(\varrho,\alpha,\beta)$ such that, for all $x:=(x_1,x_2,x_3)^\ast\in\mathbf R^3$, $x\neq 0$, we have $x_1=\varrho\cos(\alpha)\sin(\beta)$, $x_2=\varrho\sin(\alpha)\sin(\beta)$ and $x_3=\varrho\cos(\beta)$.
At each point $(\varrho,\alpha,\beta)$ we define the related local frame $(\mathbf e_\varrho,\mathbf e_\alpha,\mathbf e_\beta)$. For any $n\geq 1$, we call rigid spherical harmonics of degree $-(n+1)$ any function having the form:
\begin{equation}
\label{rigid_spherical}
(\varrho,\alpha,\beta)\mapsto\varrho^{-(n+1)}\sum_{m=-n}^n\gamma_mY_{n,m}(\cos\beta,\alpha),
\end{equation}
where $\gamma_{-n},\ldots,\gamma_n\in\mathbf{R}$ and $Y_{n,m}$ are the classical spherical harmonics of degree $n\in\mathbf N$ and order $m\in\{-n,\hdots,n\}$.

According to Lamb, \cite{Lamb:1993aa} (one can also see the book of Happel and Brenner, \cite[ch. 3.2, p. 62]{Happel_Brenner}), the solution $(\mathbf u,p)$  of the Stokes equations around an immersed body of any shape can be decomposed as follows (in the body frame):
\begin{subequations}
  \label{brenner_form}
  \begin{align}
    \nonumber
    \mathbf u&=\sum_{n=1}^{+\infty} \left(\nabla\times(\chi_{-(n+1)}\varrho\mathbf{e}_\varrho)+\nabla\phi_{-(n+1)}-\frac{n-2}{2n(2n-1)}\varrho^2\nabla p_{-(n+1)}\right.\\
     \label{vBrenner}
    &\hspace{9cm}\left.+ \frac{n+1}{n(2n-1)}p_{-(n+1)}\varrho\mathbf{e}_\varrho\right),\\
    \label{pBrenner}
    p&=\sum_{n=1}^{+\infty} p_{-(n+1)},
  \end{align}
\end{subequations}
where $p_{-(n+1)}$, $\chi_{-(n+1)}$ and $\phi_{-(n+1)}$ are rigid spherical harmonics of degree $-(n+1)$. 

The functions $p_{-(n+1)}$, $\chi_{-(n+1)}$ and $\phi_{-(n+1)}$ (or more precisely the coefficients $\gamma_k$, $k\in\{-n,\ldots,n\}$ arising in  \eqref{rigid_spherical}) have to be determined in order to satisfy the boundary conditions on the surface on the body. This can be done following a method given in \cite{Brenner:1964aa} to which we refer for further details. 

The main interest of writing the solution of the Stokes equations in the form \eqref{brenner_form} is that the entries of the matrix $\mathbb M(\mathbf c)$ and $\mathbb N(\mathbf c)$ can be easily determined.
\begin{lemma}\label{lemma:FT}
  Let $\mathcal S$ be any smooth open bounded domain of $\mathbb R^3$ and denote $\mathcal F:=\mathbf{R}^3\setminus\overline{\mathcal S}$. Let $(\mathbf u,p)\in (W^1_0(\mathcal F))^3\times L^2(\mathcal F)$ be a solution to the Stokes equations given by \eqref{brenner_form} satisfying, for some $n_0\in\mathbf N$, $\chi_{-(n+1)}=\phi_{-(n+1)}=p_{-(n+1)}=0$ for all $n>n_0$. 
  
For $i=1,\ldots,6$, let $(\mathbf u_i,p_i)\in (W^1_0(\mathcal F))^3\times L^2(\mathcal F)$ be the solution to the Stokes equations corresponding to the boundary condition $\mathbf u_i(x)=x\times\mathbf e_i$ if $i\in\{1,2,3\}$ and $\mathbf u_i(x)=\mathbf{e}_{i-3}$ if $i\in\{4,5,6\}$ on $\partial \mathcal S$.
  Then we have,
  \begin{equation}\label{FTBrennet}
    2\left(\int_{\mathcal F}D(\mathbf u):D(\mathbf u_i)\, \mathrm dx\right)_{i=1,\hdots,6}=\begin{pmatrix}-8\pi\nabla(\varrho^3\chi_{-2})\\-4\pi\nabla(\varrho^3p_{-2})\end{pmatrix}\, .
  \end{equation}
\end{lemma}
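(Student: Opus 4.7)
The plan is to convert the bulk bilinear form $2\int_{\mathcal F}D(\mathbf u):D(\mathbf u_i)\,\mathrm dx$ into a surface integral supported at infinity, where only the leading dipolar terms $p_{-2}$ and $\chi_{-2}$ of the Lamb expansion \eqref{brenner_form} survive.

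First, I would apply Green's identity. Writing $2D(\mathbf u)=\mathbb T(\mathbf u,p)+p\,\mathrm{Id}$ and using $\nabla\cdot\mathbf u_i=0$ together with the symmetry of $D(\mathbf u)$, one obtains
\[
2\int_{\mathcal F}D(\mathbf u):D(\mathbf u_i)\,\mathrm dx=\int_{\mathcal F}\mathbb T(\mathbf u,p):\nabla\mathbf u_i\,\mathrm dx.
\]
Integration by parts in the truncated domain $\mathcal F\cap B_R$, combined with $\nabla\cdot\mathbb T(\mathbf u,p)=0$, produces surface integrals on $\partial\mathcal S$ and on $S_R$. The $S_R$ part tends to $0$ as $R\to\infty$: since the Lamb expansion of $(\mathbf u,p)$ is truncated at some $n_0$, one has $\mathbb T(\mathbf u,p)=O(1/\varrho^2)$, and $\mathbf u_i=O(1/\varrho)$ by Stokeslet decay in the exterior Stokes problem, so the integrand is $O(1/\varrho^3)$ against a surface of area $O(R^2)$. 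This leaves
\[
2\int_{\mathcal F}D(\mathbf u):D(\mathbf u_i)\,\mathrm dx=\int_{\partial\mathcal S}\mathbf u_i\cdot\mathbb T(\mathbf u,p)\mathbf n\,\mathrm d\sigma.
\]

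Next I would push the boundary out to infinity, now acting on $(\mathbf u,p)$ itself. For $i\in\{4,5,6\}$, where $\mathbf u_i|_{\partial\mathcal S}=\mathbf e_{i-3}$, the divergence theorem applied to $\mathbb T(\mathbf u,p)$ on the annulus $\mathcal F\cap B_R$ gives $\int_{\partial\mathcal S}\mathbb T\mathbf n\,\mathrm d\sigma=-\int_{S_R}\mathbb T\mathbf e_\varrho\,\mathrm d\sigma$. For $i\in\{1,2,3\}$, where $\mathbf u_i|_{\partial\mathcal S}=\mathbf e_i\times x$, the same idea applies to the torque density: using $\nabla\cdot\mathbb T=0$ and the symmetry of $\mathbb T$, a short component-wise check shows that $x\mapsto x\times\mathbb T(\mathbf u,p)\mathbf n$ is the normal trace of a divergence-free object, whence $\int_{\partial\mathcal S}x\times\mathbb T\mathbf n\,\mathrm d\sigma=-\int_{S_R}x\times\mathbb T\mathbf e_\varrho\,\mathrm d\sigma$. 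In both cases the right-hand side is governed solely by the far-field asymptotics of $\mathbf u$ and $p$.

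Finally I would compute these limits by substituting the Lamb expansion \eqref{brenner_form} into the $S_R$ integrals. Each summand is built from a rigid spherical harmonic of degree $-(n+1)$; combining orthogonality of spherical harmonics of distinct degrees on $S_R$ with the decay rates read off from \eqref{vBrenner}--\eqref{pBrenner}, one shows that every contribution with $n\geq 2$, along with the gradient potentials $\nabla\phi_{-(n+1)}$ for $n\geq 1$, produces an $S_R$-integral that is $o(1)$ as $R\to\infty$. Only the $n=1$ dipoles survive: the Stokeslet piece driven by $p_{-2}$ yields the force coefficient, and the rotlet $\nabla\times(\chi_{-2}\varrho\mathbf e_\varrho)$ yields the torque coefficient. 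The gradient structure $\nabla(\varrho^3 p_{-2})$ and $\nabla(\varrho^3\chi_{-2})$ appears because $\varrho^3 p_{-2}$ and $\varrho^3\chi_{-2}$ are homogeneous harmonic polynomials of degree one, whose gradients are the constant dipole and rotlet vectors; the prefactors $4\pi$ and $8\pi$ emerge from the standard $L^2$-normalisation of degree-one spherical harmonics on $S_R$. The main obstacle is precisely this last bookkeeping---isolating the two surviving contributions and extracting the exact constants and signs (which depend on the orientation of $\mathbf n$ and the conventions in \eqref{brenner_form})---and it is essentially the classical Lamb--Brenner computation of \cite{Brenner:1964aa}, which can either be transcribed in full or invoked as a black box.
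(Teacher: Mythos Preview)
Your proposal is correct and follows the same skeleton as the paper: integrate by parts to land on $\partial\mathcal S$, transfer the surface integral to a large sphere, then use orthogonality of spherical harmonics to isolate the $n=1$ terms. The one organizational difference is in the transfer step. You split into the translational case ($i=4,5,6$, divergence theorem on $\mathbb T$) and the rotational case ($i=1,2,3$, component-wise check that $x\times\mathbb T\mathbf n$ is the normal trace of a divergence-free field), and then let $R\to\infty$. The paper instead introduces the globally defined rigid field $\tilde{\mathbf u}_i$ (equal to $\mathbf e_i\times x$ or $\mathbf e_{i-3}$ on all of $\mathbf R^3$), notes that $D(\tilde{\mathbf u}_i)=0$, and applies the Green formula once on the annulus $\mathcal F\cap B(0,R)$ to obtain
\[
\int_{\partial\mathcal S}\mathbb T(\mathbf u,p)\tilde{\mathbf u}_i\cdot\mathbf n\,\mathrm d\sigma=-\int_{\partial B(0,R)}\mathbb T(\mathbf u,p)\tilde{\mathbf u}_i\cdot\mathbf n\,\mathrm d\sigma
\]
for \emph{every} sufficiently large $R$, with no limiting argument and no case distinction. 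The final orthogonality step is then an exact identity on $\partial B(0,R)$ rather than an asymptotic one. Your version is perfectly sound; the paper's is just a bit more economical.
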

\begin{proof}
  Let $\tilde{\mathbf u}_i$ be the rigid vector field defined by $\tilde{\mathbf u}_i(x)=x\times\mathbf e_i$ if $i\in\{1,2,3\}$ and $\tilde{\mathbf u}_i(x)=\mathbf{e}_{i-3}$ if $i\in\{4,5,6\}$.
  Since $\mathbf u$ and $\mathbf u_i$ are smooth, we have $2\int_{\mathcal F}D(\mathbf u):D(\mathbf u_i)\, \mathrm dx=\int_{\partial \mathcal S}\mathbb T(\mathbf u,p)\mathbf u_i\cdot\mathbf n\, \mathrm{d}\sigma=\int_{\partial \mathcal S}\mathbb T(\mathbf u,p)\tilde{\mathbf u}_i\cdot \mathbf n\, \mathrm{d}\sigma$, where $\mathbf n$ is the normal to $\partial\mathcal S$ oriented towards the interior of $\mathcal S$.
Let $B(0,R)\subset \mathbf R^3$ be a ball centered at $0$ of radius $R>0$ such that $\mathcal S\subset B(0,R)$ and denote $\mathcal F_R:=\mathcal F\cap B(0,R)$. Using the Green formulae and the fact that for every $i\in\{1,\hdots,6\}$ we have $D(\tilde{\mathbf u}_i):=\bigl(\nabla\tilde{\mathbf u}_i+\nabla{\tilde{\mathbf u}_i}^\ast\bigr)/2=\mathbf 0$, we obtain $\int_{\partial \mathcal S}\mathbb T(\mathbf u,p)\tilde{\mathbf u}_i\cdot\mathbf n\, \mathrm{d}\sigma=-\int_{\partial B(0,R)}\mathbb T(\mathbf u,p)\tilde{\mathbf u}_i\cdot \mathbf n\, \mathrm{d}\sigma$, with $\mathbf n$ the normal to $\partial \mathcal F_R$ oriented towards the exterior of $\mathcal F_R$.
Invoking the $L^2$ orthogonality of the spherical harmonics, we get \eqref{FTBrennet}.
\end{proof}

%exerted by the fluid of velocity $\mathbf u$ and pressure $p$ on the immersed object are given by the two constant vectors:
%\begin{equation}\label{FTBrennet}
%\mathbf{T}=-8\pi\nabla(\varrho^3\chi_{-2})\qquad \text{and}\qquad \mathbf{F}=-4\pi\nabla(\varrho^3p_{-2}).
%\end{equation}
When the body is specialized to be the unit sphere and the boundary conditions for $\mathbf u$ are $\mathbf e_i\times x$ or $\mathbf e_j$ ($i,j=1,2,3$), the entries of the vectors in \eqref{FTBrennet} are the elements of the matrix $\mathbb M(c,0)$ and we get $\mathbb M(c,0)={\rm diag}(8\pi{\rm Id},4\pi{\rm Id})$. 
Similarly, if $\mathbf u=\mathbf V_i$ ($i=1,\ldots,4$) on the surface of the body, the entries of the vectors in \eqref{FTBrennet} turn out to be the elements of the matrix $\mathbb N(c,0)$.
Let now the vector fields $\mathbf V_i$ be defined by $\mathbf V_i(\varrho,\alpha,\beta):={V}_i(\varrho,\alpha,\beta)\mathbf e_\varrho$ for every $i\in\{1,\hdots,4\}$ with
\begin{subequations}
\label{def:vectors}
\begin{alignat}{1}
  V_1(\varrho,\alpha,\beta) = & \displaystyle{\varrho^{-(3+1)}\Re\left(Y_{3,1}\right)}\\
  V_2(\varrho,\alpha,\beta) = & \displaystyle{\varrho^{-(3+1)}\Im\left(Y_{3,1}\right)}\\
  V_3(\varrho,\alpha,\beta) = & \displaystyle{\varrho^{-(3+1)}\Re\left(Y_{3,2}\right)}\\
  V_4(\varrho,\alpha,\beta) = & \displaystyle{\varrho^{-(4+1)}\Re\left(Y_{4,2}\right)}
\end{alignat}
\end{subequations}
In this case, we get merely $\mathbb N(c,0)=0$ and hence $\mathbf X_i(c,0)=\mathbf 0$ ($i=1,\ldots,4$) in identity \eqref{identity:lie_brackets}.
Focusing now on the second term in the right hand side of \eqref{identity:lie_brackets}, it remains to compute, for all $i,j=1,\ldots,4$ and $\mathbf c=(c,0)$:
\begin{align}
  \partial_{s_i}\mathbf X_j(\mathbf c)-\partial_{s_j}\mathbf X_i(\mathbf c)&=
  \mathbb M(\mathbf c)^{-1}\Big[(\partial_{s_j}\mathbb M(\mathbf c)\mathbf X_i(\mathbf c)-\partial_{s_i}\mathbb M(\mathbf c)\mathbf X_j(\mathbf c))\nonumber \\
  &\hspace{5cm}+(\partial_{s_j}\mathbb N(\mathbf c)\mathbf f_i-\partial_{s_i}\mathbb N(\mathbf c)\mathbf f_j)\Big].\nonumber\\
  &=
  \mathbb M(\mathbf c)^{-1}\Big[\partial_{s_j}\mathbb N(\mathbf c)\mathbf f_i-\partial_{s_i}\mathbb N(\mathbf c)\mathbf f_j\Big].  \label{lie_brackets}
\end{align}
In particular, we need the expressions of the derivatives of the entries of the matrix $\mathbb N(\mathbf c)$ with respect to $s$. 
\begin{lemma}\label{lemma:DMN}
  Let $\mathbf V\in C^1_0(\mathbf R^3)^3\cap C^\infty(\mathbf R^3)^3$ and $\mathbf w_0\in C^\infty(\Sigma)^3$ (recall that $\Sigma$ is the boundary of the unit ball $B$ and $F:=\mathbf R^3\setminus\bar B$).
  For every $t$ small enough, we define $\varTheta_t={\rm Id}+t\mathbf V$, $\mathcal B_t=\varTheta_t(B)$, $\Sigma_t:=\partial\mathcal B_t$, $\mathcal F_t=\mathbf R^3\setminus\overline{\mathcal B_t}$ and $\mathbf w_t=\mathbf w_0\circ {\varTheta_t}^{-1}\in C^\infty(\Sigma_t)$.
  
 Let also $(\mathbf{u}_t,p_t)$ and $(\mathbf u^i_t,p^i_t)\in (W^1_0(\mathcal{F}_t))^3\times L^2(\mathcal F_t)$ ($i=1,\ldots,6$) be the solutions to the Stokes problems 
  in $\mathcal F_t$ with boundary conditions $\mathbf u_t=\mathbf w_t$, $\mathbf u^i_t(x)=x\times \mathbf e_i$ if $i\in\{1,2,3\}$ and $\mathbf u^i_t(x)= \mathbf e_{i-3}$ if $i\in\{4,5,6\}$ on $\Sigma_t$.
 Then we have 
 $$\frac{d}{dt}\left(\int_{\mathcal F_t}D(\mathbf u_t):D(\mathbf u^i_t)\, {\rm d}x\right)\Big|_{t=0}=\int_{F}D(\mathbf u'_0):D(\mathbf u^i_{t=0})\, \mathrm dx,$$ 
 where  $\mathbf u'_0\in (W^1_0(F))^3$ is solution of the homogeneous Stokes problem in $F$ with the boundary condition
  \begin{equation}\label{eq:bcDu}
    \mathbf u'_0 = -\nabla \mathbf u_{t=0}\mathbf V\qquad\text{on }\Sigma.
  \end{equation}
\end{lemma}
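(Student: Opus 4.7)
My plan is to combine a pull-back smoothness argument (to make sense of the shape derivative $\mathbf{u}'_0$) with a Hadamard-type formula for $I(t):=\int_{\mathcal F_t} D(\mathbf u_t):D(\mathbf u^i_t)\,dx$, closing with a pointwise tensorial cancellation on $\Sigma$.

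I would first apply Lemma~\ref{lemma:import} with $\xi:=t\mathbf V$ to conclude that the pulled-back fields $\mathbf U_t := \mathbf u_t\circ\varTheta_t$ and $\mathbf U^i_t := \mathbf u^i_t\circ\varTheta_t$ depend analytically on $t$ in $(W^1_0(F))^3$. Let $\dot{\mathbf U},\dot{\mathbf U}^i\in(W^1_0(F))^3$ denote their material derivatives at $t=0$, and set $\mathbf u'_0 := \dot{\mathbf U}-\nabla\mathbf u_{t=0}\mathbf V$ and ${\mathbf u^i}'_0 := \dot{\mathbf U}^i-\nabla\mathbf u^i_{t=0}\mathbf V$. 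Since $\mathbf U_t|_\Sigma(x)=\mathbf w_0(x)$ is constant in $t$, $\dot{\mathbf U}$ vanishes on $\Sigma$, and one recovers \eqref{eq:bcDu}. Differentiating the Stokes system satisfied by $\mathbf u_t$ in $t$ then shows that $\mathbf u'_0$ solves the homogeneous Stokes problem on $F$.

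Next, I would pull $I(t)$ back to the fixed domain $F$, writing $I(t)=\int_F D_t(\mathbf U_t):D_t(\mathbf U^i_t)\,J_t\,dx$ with $D_t(\mathbf U):=\tfrac12(\nabla\mathbf U\,B_t+(\nabla\mathbf U\,B_t)^\ast)$, $B_t:=(\nabla\varTheta_t)^{-1}$ and $J_t:=\det\nabla\varTheta_t$, and then differentiate under the integral sign. Using $\partial_t B_t|_0=-\nabla\mathbf V$, $\partial_t J_t|_0=\nabla\cdot\mathbf V$, and the chain rule $\dot{\mathbf U}=\mathbf u'_0+\nabla\mathbf u_{t=0}\mathbf V$, a short computation gives $\partial_t D_t(\mathbf U_t)|_{t=0} = D(\mathbf u'_0)+(\mathbf V\cdot\nabla)D(\mathbf u_{t=0})$, and likewise for the $i$-factor. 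Regrouping the resulting transport/Jacobian contributions as $\nabla\cdot\bigl(D(\mathbf u_{t=0}):D(\mathbf u^i_{t=0})\,\mathbf V\bigr)$ and invoking the divergence theorem in $F$ yields the Hadamard-type identity
$$I'(0) = \int_F D(\mathbf u'_0):D(\mathbf u^i_{t=0})\,dx + \int_F D(\mathbf u_{t=0}):D({\mathbf u^i}'_0)\,dx + \int_\Sigma D(\mathbf u_{t=0}):D(\mathbf u^i_{t=0})\,\mathbf V\cdot\mathbf n\,d\sigma.$$

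The remaining, and hardest, step is to show that the last two terms cancel. Green's identity for the Stokes pair $(\mathbf u_{t=0},p_{t=0})$ rewrites the second term as $\tfrac12\int_\Sigma \mathbb T(\mathbf u_{t=0},p_{t=0})\mathbf n\cdot{\mathbf u^i}'_0\,d\sigma$. Since $\mathbf u^i_{t=0}$ coincides with its rigid extension $\tilde{\mathbf u}_i$ on $\Sigma$, their tangential gradients there agree, and this forces ${\mathbf u^i}'_0|_\Sigma = (\mathbf V\cdot\mathbf n)(\nabla\tilde{\mathbf u}_i\mathbf n-\nabla\mathbf u^i_{t=0}\mathbf n)$. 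Both remaining terms thus carry the factor $\mathbf V\cdot\mathbf n$, and the cancellation reduces to a pointwise tensorial identity on $\Sigma$, which I would verify by decomposing $\nabla\mathbf u^i_{t=0}=\nabla_\tau\mathbf u^i_{t=0}+(\partial_n\mathbf u^i_{t=0})\otimes\mathbf n$, using the symmetry of $D(\mathbf u_{t=0})$ together with the antisymmetry of $\nabla\tilde{\mathbf u}_i$ (equal to $\hat{\mathbf e}_i$ in the rotation case), and exploiting the vanishing surface divergence of $\mathbf u^i_{t=0}|_\Sigma$, which forces $\partial_n\mathbf u^i_{t=0}\cdot\mathbf n=0$ and eliminates the pressure contributions.
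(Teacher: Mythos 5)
Your proof is correct, but it follows a genuinely different route from the paper's. The paper exploits Lemma~\ref{lemma:FT}: since $D(\tilde{\mathbf u}_i)=\mathbf 0$ and $\nabla\cdot\tilde{\mathbf u}_i=0$, the Green identity turns $2\int_{\mathcal F_t}D(\mathbf u_t):D(\mathbf u^i_t)\,{\rm d}x$ into a flux integral $-\int_{\partial B(0,R)}\mathbb T(\mathbf u_t,p_t)\tilde{\mathbf u}_i\cdot\mathbf n\,{\rm d}\sigma$ over a \emph{fixed} far-field sphere. Because the integration surface no longer moves with $t$, the derivative at $t=0$ is just $-\int_{\partial B(0,R)}\mathbb T(\mathbf u'_0,p'_0)\tilde{\mathbf u}_i\cdot\mathbf n\,{\rm d}\sigma$, and running the Green identity backwards gives $2\int_F D(\mathbf u'_0):D(\mathbf u^i_{t=0})\,{\rm d}x$ in one line. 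You instead compute a Hadamard shape derivative of the pulled-back integral, obtaining the three terms $\int_F D(\mathbf u'_0):D(\mathbf u^i_{t=0})$, $\int_F D(\mathbf u_{t=0}):D({\mathbf u^i}'_0)$, and $\int_\Sigma D(\mathbf u_{t=0}):D(\mathbf u^i_{t=0})\,\mathbf V\cdot\mathbf n\,{\rm d}\sigma$, and then must prove the last two cancel. Your cancellation mechanism is sound: writing $G:=\nabla\mathbf u^i_{t=0}$, $W:=\nabla\tilde{\mathbf u}_i$ and $A:=D(\mathbf u_{t=0})$, the identity $GP=WP$ on $\Sigma$ (agreement of tangential derivatives, $P:={\rm Id}-\mathbf n\otimes\mathbf n$) together with $A:G={\rm tr}(AWP)+A\mathbf n\cdot G\mathbf n$ collapses the bracket to ${\rm tr}(AW(\mathbf n\otimes\mathbf n))+{\rm tr}(AWP)={\rm tr}(AW)=0$, and the pressure drops out because $\mathbf n\cdot{\mathbf u^i}'_0=(\mathbf V\cdot\mathbf n)\,\mathbf n\cdot(W-G)\mathbf n=0$. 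One small inaccuracy to flag: the key fact $\mathbf n\cdot G\mathbf n=0$ is not the ``vanishing surface divergence of $\mathbf u^i_{t=0}|_\Sigma$''; it follows from $\nabla\cdot\mathbf u^i_{t=0}=0$ (three-dimensional divergence) combined with the agreement of tangential derivatives with the divergence-free rigid field $\tilde{\mathbf u}_i$, so that $\mathbf n\cdot G\mathbf n=\mathbf n\cdot W\mathbf n=0$. The trade-off: your argument avoids the Lamb--Brenner far-field representation and so is more self-contained and works in principle for non-spherical reference domains, but at the cost of the longer tensorial bookkeeping; the paper's argument is markedly shorter because pushing the integral to a fixed sphere sidesteps the moving-boundary terms entirely.
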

\begin{proof}
Since, for all $t$ small, the solution $\mathbf u_t$ is smooth, 
%(see for instance Galdi \cite[Theorem 4.1, ch. 5, p. 277]{Galdi})
according to \cite[Theorem 4]{Simon:1991aa}, the derivative of $t\mapsto \mathbf u_t$ at $t=0$ is solution of the homogeneous Stokes problem  in $F$ with boundary condition \eqref{eq:bcDu} (notice that the boundary condition is merely obtained by differentiating the equation $\mathbf u_t\circ \varTheta_t=0$ with respect to $t$ at $t=0$).
Using the same argument as in the proof of Lemma \ref{lemma:FT}, we have 
  $2\int_{\mathcal F_t}D(\mathbf u_t):D(\mathbf u^i_t)\, \mathrm dx=-\int_{\partial B(0,R)}\mathbb T(\mathbf u_t,p_t)\tilde{\mathbf u}_i\cdot\mathbf n\, \mathrm d\sigma$. 
Differentiating with respect to $t$ and invoking the linearity of $\mathbb T$ and the Green formulae, we get the conclusion.
\end{proof}

Applying Lemma~\ref{lemma:DMN} with the vector fields $\mathbf V_i$ ($i=1,\ldots,4$) defined in \eqref{def:vectors}, we obtain after lengthy computations involving spherical harmonics:
$$\partial_{s_1}\mathbb{N}(c,s)\Big|_{s=0} =\begin{pmatrix}
  0 & 0 & 0 & 0\\
  0 & 0 & -\frac{3\sqrt{5}}{2^{\frac{7}{2}}} & 0\\
  0 & -\frac{3}{8} & 0 & 0\\
  0 & 0 & 0 & -\frac{\sqrt{3}\sqrt{5}}{\sqrt{2}\sqrt{7}}\\
  0 & 0 & 0 & 0\\
  0 & 0 & 0 & 0
\end{pmatrix},\quad \partial_{s_2} \mathbb{N}(c,s)\Big|_{s=0} =\begin{pmatrix}
  0 & 0 & -\frac{3\sqrt{5}}{2^{\frac{7}{2}}} & 0\\
  0 & 0 & 0 & 0\\
  \frac{3}{8} & 0 & 0 & 0\\
  0 & 0 & 0 & 0\\
  0 & 0 & 0 & \frac{\sqrt{3}\sqrt{5}}{\sqrt{2}\sqrt{7}}\\
  0 & 0 & 0 & 0
\end{pmatrix},$$
$$\partial_{s_3} \mathbb{N}(c,s)\Big|_{s=0} =\begin{pmatrix}
  0 & \frac{3\sqrt{5}}{2^{\frac{7}{2}}} & 0 & 0\\
  \frac{3\sqrt{5}}{2^{\frac{7}{2}}} & 0 & 0 & 0\\
  0 & 0 & 0 & 0\\
  0 & 0 & 0 & 0\\
  0 & 0 & 0 & 0\\
  0 & 0 & 0 & -\frac{2\sqrt{3}}{\sqrt{7}}
\end{pmatrix},\quad\partial_{s_4}\mathbb{N}(c,s)\Big|_{s=0} =\begin{pmatrix}
  0 & 0 & 0 & 0\\
  0 & 0 & 0 & 0\\
  0 & 0 & 0 & 0\\
  -\frac{\sqrt{3}\,5^{\frac{3}{2}}}{2^{\frac{9}{2}}\sqrt{7}} & 0 & 0 & 0\\
  0 & \frac{\sqrt{3}\,5^{\frac{3}{2}}}{2^{\frac{9}{2}}\sqrt{7}} & 0 & 0\\
  0 & 0 & -\frac{5\sqrt{3}}{8\sqrt{7}} & 0
\end{pmatrix}.$$

One easily check now that ${\rm dim}\, \left({\rm span}\left\{\partial_{s_i}\mathbb N(\mathbf c)\mathbf f_j- \partial_{s_j}\mathbb N(\mathbf c)\mathbf f_i,1\leq i<j<4\right\}\right)=6$ and then 
 ${\rm dim}\, \left({\rm span}\left\{\mathbf Z^k_c({\rm Id},0), [\mathbf Z^i_c({\rm Id},0), \mathbf Z^j_c({\rm Id},0)],1\leq k\leq 4,1\leq i<j<4\right\}\right)=10$ which is the dimension of ${\rm SO}(3)\times\mathbf R^3\times\mathcal S(c)$.
 It entails, according to the forth point of Proposition~\ref{properties_of_SS}:

\begin{proposition}
\label{everything_is_controllable}
For any integer $n\geq 4$, the set of all the controllable SS is an open dense subset in $\mathcal C(n)$. 
\end{proposition}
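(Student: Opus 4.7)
The plan is to deploy the explicit construction carried out in the preceding paragraphs as a witness and then upgrade this single witness to a density statement via the fourth point of Proposition~\ref{properties_of_SS}. More precisely, I would first invoke the particular SS $c := (\mathrm{Id},\mathcal V) \in \mathcal C(4)$ with $\mathcal V := (\mathbf V_1,\ldots,\mathbf V_4)$ defined by \eqref{def:vectors}: the shape at rest is the unit ball and the four elementary deformations are radial and built from the rigid spherical harmonics of degree $-4$ and $-5$. For this choice we have computed that $\mathbb N(c,0) = 0$, hence $\mathbf X_i(c,0) = \mathbf 0$ for $i=1,\ldots,4$, so that formula \eqref{identity:lie_brackets} together with the simplification \eqref{lie_brackets} yields
\[
[\mathbf Z_c^i(\mathrm{Id},0),\,\mathbf Z_c^j(\mathrm{Id},0)] = \mathcal R_{\mathrm{Id}}\begin{pmatrix}\widehat{\bigl(\partial_{s_i}\mathbf X_j^1 - \partial_{s_j}\mathbf X_i^1\bigr)}(c,0)\\ \bigl(\partial_{s_i}\mathbf X_j^2 - \partial_{s_j}\mathbf X_i^2\bigr)(c,0)\\ \mathbf 0\end{pmatrix}
\]
with $\partial_{s_i}\mathbf X_j(c,0) - \partial_{s_j}\mathbf X_i(c,0) = \mathbb M(c,0)^{-1}(\partial_{s_j}\mathbb N(c,0)\mathbf f_i - \partial_{s_i}\mathbb N(c,0)\mathbf f_j)$.

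From the explicit matrices for $\partial_{s_k}\mathbb N(c,s)|_{s=0}$ displayed above, a direct inspection shows that the six vectors $\partial_{s_j}\mathbb N(c,0)\mathbf f_i - \partial_{s_i}\mathbb N(c,0)\mathbf f_j$, for $1\leq i<j\leq 4$, span a $6$-dimensional subspace of $\mathbf R^6$, and multiplication by the invertible block-diagonal $\mathbb M(c,0)^{-1} = \mathrm{diag}((8\pi)^{-1}\mathrm{Id},(4\pi)^{-1}\mathrm{Id})$ preserves this rank. Combining these six brackets with the four vector fields $\mathbf Z_c^k(\mathrm{Id},0)$, whose last $n=4$ components form the canonical basis of $\mathbf R^4$, produces $10$ linearly independent tangent vectors at $\zeta^\ast := (\mathrm{Id},\mathbf 0,0)$, matching the dimension $3+3+4 = 10$ of $\mathcal M(c)$. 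Hence $\mathrm{dim}\,\mathrm{Lie}_{\zeta^\ast}\mathcal Z(c) = 6+n$ with $n=4$, so $c$ is controllable in the sense of Definition~\ref{def:cont:SS}.

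Having exhibited a controllable SS in $\mathcal C(4)$, the conclusion for arbitrary $n \geq 4$ follows immediately from the fourth point of Proposition~\ref{properties_of_SS}: the set of controllable SS in $\mathcal C(n)$ is open and dense. The main obstacle in executing this plan is the rank verification for the six antisymmetrized columns of the $\partial_{s_k}\mathbb N$ matrices; however, this is a routine finite-dimensional linear-algebra check on the explicit sparse matrices displayed above (one observes that the first three rows yield three independent vectors from the pairs $(1,2), (1,3), (2,3)$, while the last three rows contribute three further independent vectors from the pairs involving the index $4$), so no genuine difficulty remains.
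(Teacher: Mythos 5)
Your proposal is correct and follows exactly the paper's own route: exhibit the explicit controllable SS $c = ({\rm Id},\mathcal V)\in\mathcal C(4)$ built from the radial spherical-harmonic deformations of \eqref{def:vectors}, check via the displayed $\partial_{s_k}\mathbb N$ matrices that the six antisymmetrized columns together with the four $\mathbf Z_c^k$ span the full $10$-dimensional tangent space, and then invoke the fourth point of Proposition~\ref{properties_of_SS}. (You also silently correct the paper's evident typo $1\leq i<j<4$, which should read $1\leq i<j\leq 4$ to yield six pairs.)
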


%==========================
\section{Proofs of the Main Results}
\label{sec:main_results}
\subsubsection*{Proof of Proposition~\ref{existence}}
Let a control function $\vartheta$ be given in $W^{1,1}([0,T],D^1_0(\mathbf R^3))$ and denote $\varTheta:={\rm Id}+\vartheta$. With the notation of Lemma~\ref{lemma:import}, at any time $t$ the entries of the matrix $\mathbb M(t)$ have the form $\varPhi(\mathbf q)$ with $\mathbf q:=(\vartheta,\mathcal W)$, $\mathcal W:=(\mathbf W^1,\mathbf W^2)$, $\mathbf W^j\in\{\mathbf e_i\times\varTheta_t,\,\mathbf e_i,\,i=1,2,3\}$ ($j=1,2$). We deduce that $t\in[0,T]\mapsto\mathbb M(t)\in{\rm M}(3)$ is absolutely continuous. To get the expression of the elements of the vector $\mathbf N(t)$ we only have to modify $\mathbf W^2$ which has to be equal to $\partial_t\vartheta_t$. It entails that $t\in[0,T]\mapsto\mathbf N(t)\in\mathbf R^6$ is in $L^1([0,T],\mathbf R^6)$. 
Existence  of solutions is now straightforward because $t\in[0,T]\mapsto\mathbb M(t)^{-1}\mathbf N(t)\in\mathbf R^6$ is in $L^1([0,T],\mathbf R^6)$ and Carath\'eodory's existence theorem applies to \eqref{complement}. Uniqueness derives from Gr\"onwall's inequality.

Let us address the stability result. With the same notation as in the statement of Proposition~\ref{existence}, denote by $({\boldsymbol\Omega}^j,\mathbf v^j)^\ast$ the left hand side of identity \eqref{dynamics} when the control is $\vartheta^j$ and $(\bar{\boldsymbol\Omega},\bar{\mathbf v})^\ast$ when the control is $\bar\vartheta$.  As $j\to+\infty$, it is clear that $({\boldsymbol\Omega}^j,\mathbf v^j)^\ast\to(\bar{\boldsymbol\Omega},\bar{\mathbf v})^\ast$ in $L^1([0,T],\mathbf R^6)$. Then, integrating \eqref{complement} between 0 and $t$ for any $0\leq t\leq T$, we get the estimate $\|\bar R(t)-R^j(t)\|_{{\rm M}(3)}\leq \int_0^T\|\bar R(s)-R^j(s)\|_{{\rm M}(3)}\|\bar{\boldsymbol\Omega}(s)\|_{\mathbf R^3}+\|\boldsymbol\Omega^j(s)-\bar{\boldsymbol\Omega}(s)\|_{\mathbf R^3}{\rm d}s.$ Applying Gr\"onwall's inequality, we conclude that $R^j\to\bar R$ in $C([0,T],{\rm M}(3))$ as $j\to+\infty$ and we use again the ODE to prove that $\dot R^j\to\dot{\bar R}$ in $L ^1([0,T],{\rm M}(3))$. Then, it is easy to obtain the convergence of $\mathbf r^j$ to $\bar{\mathbf r}$ and to conclude the proof.

\subsubsection*{Proof of Theorems~\ref{main_theorem_cont} and \ref{main_theorem_free}} We shall focus on the proof of Theorem~\ref{main_theorem_cont} because it will contain the proof of Theorem~\ref{main_theorem_free}. For any integer $n$, we shall use the notation $\|c\|_{\mathcal C(n)}:=\|\vartheta\|_{C^1_0(\mathbf R^3)^3}+\sum_{i=1}^n\|\mathbf V_i\|_{C^1_0(\mathbf R^3)^3}$
for all $c\in C^1_0(\mathbf R^3)^3\times(C^1_0(\mathbf R^3)^3)^n$ with, as usual, $c:=(\vartheta,\mathcal V)$ and $\mathcal V:=(\mathbf V_1,\ldots,\mathbf V_n)$.

Let $\varepsilon>0$ and the functions $t\in[0,T]\mapsto\bar\vartheta_t\in D^1_0(\mathbf R^3)$ and $t\in[0,T]\mapsto (\bar R(t),\bar{\mathbf r}(t))\in{\rm SO}(3)\times\mathbf R^3$ be given as in the statement of the theorem. According to Proposition~\ref{PROPo}, we can assume that $\bar\vartheta\in C^\omega([0,T],D^1_0(\mathbf R^3))\cap\mathcal A$ because this space is a dense subspace of $\mathcal A$. 

\noindent{\bf Step 1 (small initial jerking of the swimmer).} In this step, we prove that the swimmer is able to modify slightly its shape in order to become controllable.
Set $\bar\vartheta^1:=\bar\vartheta_{t=0}$ and $\bar{\mathbf V}^1_1:=\partial_t\bar\vartheta_{t=0}\in C^1_0(\mathbf R^3)^3$. According to the self-propelled constraints \eqref{self-propelled-cond}, it is always possible to find three elements $\bar{\mathbf V}^1_j$ ($j=2,3,4$) in $C^1_0(\mathbf R^3)^3$ such that the SS $\bar c^1:=(\bar\vartheta^1,\bar{\mathcal V}^1)$ belongs to ${\mathcal C}(4)$ (with $\bar{\mathcal V}^1:=(\bar{\mathbf V}^1_1,\ldots,\bar{\mathbf V}^1_4)$). Then, Proposition~\ref{everything_is_controllable} guarantees that for any $\delta>0$ it is possible to find a {\it controllable} SS  in $\mathcal C(4)$, denoted by $c^1:=(\vartheta^1,\mathcal V^1)$ where $\mathcal V^1:=(\mathbf V^1_1,\ldots,\mathbf V^1_4)$, such that $\|c^1-\bar c^1\|_{\mathcal C(4)}<\delta/2$ ($\delta>0$ is meant to be small an will be fixed later on). Moreover, we claim that $c^1$ can be chosen in such a way that there exists a smooth {\it allowable} function (i.e. satisfying \eqref{self-propelled-cond}) $t\in[-1,0]\mapsto\vartheta^0_t\in D^1_0(\mathbf R^3)$ such that $\vartheta^0_{t=-1}=\bar\vartheta^1$ and $\vartheta^0_{t=0}=\vartheta^1$ (i.e. the swimmer can modify its shape from $\bar\vartheta^1$ into $\vartheta^1$ by self-deforming on time interval $[-1,0]$). Indeed, denote $\hat c^1:=(\hat\vartheta^1,\hat{\mathcal V}^1)\in \mathcal C(4)$ a controllable SS such that $\|\bar c^1-\hat c^1\|_{\mathcal C(4)}$ be small. Then define $\bar\vartheta^0_t:=\bar\vartheta^1+(1+t)(\hat\vartheta^1-\bar\vartheta^1)$ for every $t\in[-1,0]$. Since $D^1_0(\mathbf R^3)$ is open, for $\|\hat\vartheta^1-\bar\vartheta^1\|_{C^1_0(\mathbf R^3)^3}$ small enough, $\bar\vartheta^0_t$ will remain in $D^1_0(\mathbf R^3)$ for all $t\in[-1,0]$. Then, Proposition~\ref{PROP:1_1} asserts that there exists a function $Q_0\in AC([-1,0],{\rm SO}(3))$ and an allowable shape function $\vartheta^0_t\in W^{1,1}([-1,0],D^1_0(\mathbf R^3))$ such that $\varTheta^0_t$ links $\bar\varTheta^1$ (at $t=-1$) to some $\varTheta^1$  (at $t=0$) satisfying $\varTheta^1|_\Sigma=Q_0(0)\hat\varTheta^1|_\Sigma$. A careful reading of the proof of Proposition~\ref{PROP:1_1} allows noticing that  $\| Q_0-{\rm Id}\|_{C([-1,0],{\rm M}(3))}$ and $\|\vartheta^0_t-\bar\vartheta^1\|_{W^{1,1}([-1,0],D^1_0(\mathbf R^3))}$ go to 0 as $\|\hat\vartheta^1-\bar\vartheta^1\|_{C^1_0(\mathbf R^3)^3}$ goes to 0. Set now $\vartheta^1:=\varTheta^1-{\rm Id}$, $\mathbf V^1_i:=Q_0(0)\hat{\mathbf V}^1_i$ ($i=1,\ldots,4$) and observe that the resulting SS $c^1$ satisfies the requirements. Furthermore, according to Proposition~\ref{existence}, $\|\hat\vartheta^1-\bar\vartheta^1\|_{C^1_0(\mathbf R^3)^3}$ can always be made small enough for the control function $\vartheta^0_t$ to produce a rigid displacement $t\in[-1,0]\mapsto (R_0(t),\mathbf r_0(t))\in{\rm SO}(3)\times\mathbf R^3$ satisfying $\sup_{t\in[-1,0]}\big(\|R_0(t)-\bar R(0)\|_{{\rm M}(3)}+\|\mathbf r_0(t)-\bar{\mathbf r}(0)\|_{\mathbf R^3}+\|\vartheta^0_t-\bar\vartheta^1\|_{C^1_0(\mathbf R^3)^3}\big)<\varepsilon/2$. Eventually, remark that this step of initial jerking performed on the time interval $[t_0,t_1]:=[-1,0]$ can actually be carried out on a time interval arbitrarily short just by rescaling the time.

\noindent{\bf Step 2 (building a continuous piecewise $C^1$ control function).}
Since the function $\partial_t\bar\vartheta$ is continuous on the compact set $[0,T]$, it is uniformly continuous. For any $ \nu>0$, there exists $\delta_ \nu>0$ such that 
$ \|\partial_t\bar\vartheta_t-\partial_t\bar\vartheta_{t'}\|_{C^1_0(\mathbf R^3)^3}<  \nu$ providing that $|t-t'|\leq \delta_ \nu$.
Then, we divide the time interval $[0,T]$ into $0=t_1<t_2<\ldots<t_k=T$ such that $|t_{j+1}-t_j|<\delta_ \nu$ for $j=1,\ldots,k-1$.
For any $t\in[t_1,t_2]$, we have the estimate:
\begin{multline*}
\|\bar\vartheta_t-(\vartheta^1+(t-t_1)\mathbf V_1^1)\|_{C^1_0(\mathbf R^3)^3}\leq\|\bar\vartheta_t-(\bar\vartheta^1+(t-t_1)\bar{\mathbf V}^1_1)\|_{C^1_0(\mathbf R^3)^3}\\
+\|\bar\vartheta^1-\vartheta^1\|_{C^1_0(\mathbf R^3)^3}+(t-t_1)\|\bar{\mathbf V}^1_1-\mathbf V_1^1\|_{C^1_0(\mathbf R^3)^3}.
\end{multline*}
On the one hand, we have, for all $t\in [t_1,t_2]$, $\|\bar\vartheta_t-(\bar\vartheta^1+(t-t_1)\bar{\mathbf V}^1_1)\|_{C^1_0(\mathbf R^3)^3}<  \nu |t-t_1|$. 
On the other hand, still for $t_1\leq t\leq t_2$ and if we assume that $\delta_ \nu<1$, we get $\|\bar\vartheta^1-\vartheta^1\|_{C^1_0(\mathbf R^3)^3}+(t-t_1)\|\bar{\mathbf V}^1_1-\mathbf V_1^1\|_{C^1_0(\mathbf R^3)^3}\leq \delta/2.$
We denote $\bar\vartheta^2:=\bar\vartheta_{t=t_2}$. It is always possible to supplement $\bar{\mathbf V}^2_1:=\partial_t\bar\vartheta_{t_2}$ with vector fields $\bar{\mathbf V}_j^2$ ($j=2,\ldots,4$) in such a way that $\bar c^2:=(\bar\vartheta^2,\bar{\mathcal V}^2)$ be in ${\mathcal C}(4)$ with the obvious notation $\bar{\mathcal V}^2:=(\bar{\mathbf V}^2_1,\ldots,\bar{\mathbf V}_4^2)$. 
We define $\vartheta^2:=\vartheta^1+(t_2-t_1)\mathbf V^1_1$. For any $t_1\leq t\leq t_2$, Proposition~\ref{properties_of_SS} guarantees that the SS $c^1_t:=(\vartheta^1+(t-t_1)\mathbf V^1_1,\mathcal V^1)$ is controllable. In particular, for $t=t_2$, there exists an integer $k$ and a family of 10 vector fields\footnote{10 is the dimension of ${\rm SO}(3)\times\mathbf R^3\times\mathcal S(c^1_{t_2})$} in $\mathcal E_k$ (the set of all the Lie brackets of order lower or equal to $k$) such that the determinant of the family is nonzero. But this determinant can be thought of as an analytic function in $\mathcal V^1$. The set $\varPi^{-1}(\{\vartheta^2\})$ being an analytic connected submanifold of $(C^1_0(\mathbf R^3)^3)^4$ (see Corollary~\ref{cor:2}), the determinant is nonzero everywhere on this set but maybe in a closed subset of empty interior (for the induced topology). Therefore, it is possible to find $\mathcal V^2\in (C^1_0(\mathbf R^3)^3)^4$ such that the SS $c^2:=(\vartheta^2,\mathcal V^2)$ is controllable and $\|\bar c^2-c^2\|_{\mathcal C(4)}<(\delta/2+ \nu (t_2-t_1))+\delta/4$. 

 By induction, we can build $\bar c^j$ and $c^j$ ($j=1,2,\ldots,k$) such that  (i) $\|\bar c^j-c^j\|\leq \delta/2+\sum_{i=2}^k{\delta}/{2^i}+ \nu (t_{i}-t_{i-1})<\delta+ \nu T$ and (ii) every $c^j$ is controllable. We choose $\delta$ and $ \nu$ in such a way that $\delta+ \nu T<\varepsilon/4$ and we define $t:[0,T]\mapsto \tilde\vartheta_t\in D^1_0(\mathbf R^3)$  as continuous, piecewise affine functions by $\tilde\vartheta_t:=\vartheta^j+(t-t_j)\mathbf V^j_1$ if $t\in [t_j,t_{j+1}]$ ($j=1,\ldots,k-1$).  Notice that for any $t\in[0,T]$, $\|\bar\vartheta_t-\tilde\vartheta_t\|_{C^1_0(\mathbf R^3)^3}<\varepsilon/2$.
 
Definition~\ref{def:cont:SS} 	and Proposition~\ref{prop:prop12} ensure that, on every   interval $[t_j,t_{j+1}]$ ($j=1,\ldots,k-1$), there exist four $C^1$ functions $\lambda^j_i:[t_j,t_{j+1}]\mapsto \mathbf R$ ($i=1,\ldots,4$) such that the solution $(R_j,\mathbf r_j,s^j):[t_j,t_{j+1}]\to {\rm SO}(3)\times\mathbf R^3\times\mathbf R^4$ to the ODE \eqref{dynamics:3}  with vector fields $\mathbf Z_{c^j}^i(R_j,s^j)$ and Cauchy data $R_1(t_1) = R_0(0)$, $\mathbf r_1(t_1)=\mathbf r_0(0)$, $R_j(t_j)=\bar R(t_j)$, $\mathbf r_j(t_j)=\bar{\mathbf r}(t_j)$ ($j=2,\ldots,k-1$) and $s^j(t_j)=0$ ($j=1,\ldots,k-1$) satisfies:
\begin{enumerate}
\item 
$\sup_{t\in[t_j,t_{j+1}]}\Big(\| \bar R(t)-R_j(t)\|_{{\rm M}(3)}+\|\bar{\mathbf r}(t)-\mathbf r_j(t)\|_{\mathbf R^3}+\|\tilde\vartheta_t-\vartheta_t^j\|_{C^1_0(\mathbf R^3)^3}\Big)<\varepsilon/4$ with $\vartheta^j_t:=\vartheta^j+\sum_{i=1}^4s^j_i(t)\mathbf V^j_i$;
\item $R_j(t_{j+1})=\bar R(t_{j+1})$, $\mathbf r_j(t_{j+1})=\bar{\mathbf r}(t_{j+1})$ and $s^j(t_{j+1})=(t_{j+1}-t_j,0,0,0)^\ast
$.
\end{enumerate}
 With these settings, the functions $t\in[-1,T]\mapsto\breve\vartheta_t\in D^1_0(\mathbf R^3)$, $\breve R:[-1,T]\to{\rm SO}(3)$ and $\breve{\mathbf r}:[-1,T]\to\mathbf R^3$ defined by $\breve\vartheta_t:=\vartheta^j_t$, $\breve R(t):=R_j(t)$ and $\breve{\mathbf r}(t):=\mathbf r_j(t)$ if $t\in[t_j,t_{j+1}]$ ($j=0,\ldots,k-1$) are continuous, piecewise $C^1$. 
 
 \noindent{\bf Step 3 (smoothing the control function).}
 We obtain a control function on $[0,T]$ (still denoted by $\breve\vartheta$) by merely shifting/rescaling the time, from $[-1,T]$ onto $[0,T]$. Beforehand and as already mentioned, the first time interval $[t_0,t_1]:=[-1,0]$ could have been shortened as much as necessary for the estimate 
$$\sup_{t\in[0,T]}\Big(\| \bar R(t)-\breve R(t)\|_{{\rm M}(3)}+\|\bar{\mathbf r}(t)-\breve{\mathbf r}(t)\|_{\mathbf R^3}+\|\bar\vartheta_t-\breve\vartheta_t\|_{C^1_0(\mathbf R^3)^3}\Big)<\varepsilon/2,$$
to be true after the shifting/rescaling process. Then, we invoke Proposition~\ref{PROPo} and Proposition~\ref{existence} to conclude that there exists $t\in[0,T]\mapsto\vartheta_t\in D^1_0(\mathbf R^3)$ analytic, satisfying \eqref{self-propelled-cond} and such that 
$$\sup_{t\in[0, T]}\Big(\| R(t)-\breve R(t)\|_{{\rm M}(3)}+\|{\mathbf r}(t)-\breve{\mathbf r}(t)\|_{\mathbf R^3}+\|\vartheta_t-\breve\vartheta_t\|_{C^1_0(\mathbf R^3)^3}\Big)<\varepsilon/2,$$
where $(R,\mathbf r):[0,T]\mapsto {\rm SO}(3)\times\mathbf R^3$ is the solution to System~\eqref{main_dynamics} with initial data $(R(0),\mathbf r(0))=(\bar R(0),\bar{\mathbf r}(0))$ and control $\vartheta$. The proof is then complete. 
%==============
\section{Conclusion}
\label{sec:conclusion}
In this paper, we have proved that every 3D microswimmer as the ability to swim (i.e. not only moving but  tracking any given trajectory). Moreover, this can be achieved by means of arbitrarily small shape changes which can be superimposed to any preassigned macro deformation.
When the shape changes are expressed as a finite combination of elementary deformations (and no macro shape changes are prescribed), we have shown that only four elementary deformations are needed for the swimmer to be able to track any trajectory. In this case and when the rate of shape changes (i.e. the velocity of deformations) is valued in a compact set, an optimal control exists for a wide variety of cost functionals.
%==============================================
\appendix

\section{Function spaces}
\label{SEC:diffeo}
\subsubsection*{Classical function spaces} 
\begin{itemize}
\item  For any open set $\Omega\subset\mathbf R^3$ (included $\Omega=\mathbf R^3$), $\mathcal D(\Omega)$ is the space of the smooth ($C^\infty$) functions, compactly supported in $\Omega$. 
\item For any open set $\Omega\subset\mathbf R^3$ (included $\Omega=\mathbf R^3$), the set $C^1_0(\Omega)$ 
is the completion of $\mathcal D(\Omega)$ for the norm $\|u\|_{C^1_0(\Omega)}:=\sup_{x\in\Omega}|u(x)|+\|\nabla u(x)\|_{\mathbf R^3}$. When $\Omega=\mathbf R^3$, we get $C^1_0(\mathbf R^3):=\{u\in C^1(\mathbf R^3)\,:\:|u(x)|\to 0\text{ and }\|\nabla u(x)\|_{\mathbf R^3}\to 0\text{ as }\|x\|_{\mathbf R^3}\to+\infty\}$.
\item The space $C^1_0(\mathbf R^3)^3$ is the Banach space of all of the vector fields in $\mathbf R^3$ whose every component belongs to $C^1_0(\mathbf R^3)$.
\item For any Banach space $E$ and any $T>0$, $C^\omega([0,T],E)$ is the space of analytic functions on $[0,T]$, valued in $E$.
\item Let now $E$ be an open subset or an embedded submanifold of an Euclidean space and $T>0$, then $AC([0,T],E)$ consists in the absolutely continuous functions from $[0,T]$ into $E$. It is endowed with the norm 
$\|u\|_{AC([0,T],E)}:=\sup_{t\in[0,T]}\|u_t\|_E+\int_0^T\|\partial_t u_t\|_E{\rm d}t$.
\item $C_0^m(\Omega,{\rm M}(k))$ ($m$ an integer) is the Banach space of the  functions of class $C^m$ in $\mathbf R^3$ valued in ${\rm M}(k)$ (${\rm M}(k)$ stands for the Banach space of the $k\times k$ matrices, $k$ a positive integer) and compactly supported in $\Omega$. 
\item $E^m_0(\Omega,{\rm M}(k))$ stands for the connected component containing the zero function of the open subset $\{M\in C^m_0(\Omega,{\rm M}(k))\,:\det({\rm Id}+M(x))\neq 0\,\,\forall\,x\in\mathbf R^3\}$. 
\end{itemize}
\begin{lemma}
The set $\tilde D^1_0(\mathbf R^3):=\{\vartheta\in C^1_0(\mathbf R^3)^3\;{\rm s.t.}\;{\rm Id}+\vartheta{\rm\; is\; a }\;C^1\;{\rm diffeomorphism\; of\; }\mathbf R^3\}$
is open in $C^1_0(\mathbf R^3)^3$.
\end{lemma}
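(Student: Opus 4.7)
The plan is to reduce openness of $\tilde D^1_0(\mathbf R^3)$ to two conditions that are visibly stable under small $C^1_0$-perturbations, namely (a) the Jacobian determinant of $\mathrm{Id}+\vartheta$ being uniformly bounded away from zero, and (b) the map being proper. Combined with Hadamard's global inverse function theorem (a $C^1$ map $\mathbf R^3\to\mathbf R^3$ is a $C^1$ diffeomorphism as soon as it is proper and has everywhere-invertible differential), this will yield the statement.

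Fix $\vartheta_0\in \tilde D^1_0(\mathbf R^3)$. My first step is to upgrade the pointwise non-vanishing of $\det(\mathrm{Id}+\nabla\vartheta_0)$ to a uniform lower bound. Since $\vartheta_0\in C^1_0(\mathbf R^3)^3$, we have $\nabla\vartheta_0(x)\to 0$ as $\|x\|_{\mathbf R^3}\to\infty$, so $\det(\mathrm{Id}+\nabla\vartheta_0)(x)\to 1$ at infinity. The function $x\mapsto\det(\mathrm{Id}+\nabla\vartheta_0)(x)$ is continuous, never zero (because $\mathrm{Id}+\vartheta_0$ is a diffeomorphism), and tends to $1$ at infinity; hence it has constant positive sign, is bounded below by, say, $1/2$ outside a sufficiently large ball, and attains a positive minimum on that compact ball. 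So there exists $c_0>0$ with $\det(\mathrm{Id}+\nabla\vartheta_0)(x)\geq c_0$ for every $x\in\mathbf R^3$. Using local Lipschitz continuity of $M\mapsto\det(\mathrm{Id}+M)$ on bounded subsets of $\mathrm M(3)$, I can choose $\varepsilon>0$ so small that any $\vartheta$ with $\|\vartheta-\vartheta_0\|_{C^1_0(\mathbf R^3)^3}<\varepsilon$ still satisfies $\det(\mathrm{Id}+\nabla\vartheta)(x)\geq c_0/2$ for every $x$. Thus $\mathrm{Id}+\vartheta$ is a local $C^1$ diffeomorphism everywhere.

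For properness, I would simply use that any $\vartheta\in C^1_0(\mathbf R^3)^3$ is bounded, so
\[
\|(\mathrm{Id}+\vartheta)(x)\|_{\mathbf R^3}\geq \|x\|_{\mathbf R^3}-\|\vartheta\|_{L^\infty}\longrightarrow +\infty\quad\text{as }\|x\|_{\mathbf R^3}\to+\infty,
\]
so preimages of compact sets are bounded, hence compact by continuity. Hadamard's theorem then gives that $\mathrm{Id}+\vartheta$ is a $C^1$ diffeomorphism of $\mathbf R^3$, i.e.\ $\vartheta\in\tilde D^1_0(\mathbf R^3)$, which is the desired openness.

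The only genuine subtlety is the global-vs.-local issue: invertibility of the differential at every point does not a priori rule out covering-type phenomena, and this is precisely what Hadamard's theorem handles via the simple-connectedness of $\mathbf R^3$. Everything else reduces to the two uniform estimates above, both of which are elementary consequences of the definition of the $C^1_0$ norm.
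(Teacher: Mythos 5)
Your argument is correct and takes a genuinely different route from the paper. You obtain a uniform lower bound on the Jacobian determinant $\det({\rm Id}+\nabla\vartheta)$ (using the decay of $\nabla\vartheta_0$ at infinity to pin the sign, then compactness inside a large ball), note that this persists under small $C^1_0$ perturbations, combine it with the elementary properness estimate $\|x+\vartheta(x)\|\geq\|x\|-\|\vartheta\|_\infty$, and invoke Hadamard's global inverse function theorem. The paper instead introduces the "monotonicity" quantity $\delta_\vartheta:=\inf_{x\in\mathbf R^3,\,\mathbf e\in S^2}\bigl(({\rm Id}+\nabla\vartheta(x))\mathbf e\bigr)\cdot\mathbf e$, observes that $\vartheta\mapsto\delta_\vartheta$ is continuous, and derives global injectivity directly from the estimate $(F(y)-F(x))\cdot(y-x)\geq\delta_\vartheta\|y-x\|^2$ for $F={\rm Id}+\vartheta$, afterwards concluding surjectivity from the local inversion theorem. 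The paper's route avoids invoking Hadamard's theorem as a black box and yields an explicit quantitative injectivity modulus; on the other hand it relies on the positivity of $\delta_{\vartheta_0}$, i.e.\ on the symmetric part of ${\rm Id}+\nabla\vartheta_0$ being uniformly positive definite, which is a strictly stronger and less transparent condition than the nonvanishing of $\det({\rm Id}+\nabla\vartheta_0)$ that your argument uses. Your version is thus arguably the more robust of the two, at the cost of quoting a classical global inverse function theorem rather than building everything from the local one.
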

\begin{proof}
The mapping $\vartheta\in C^1_0(\mathbf R^3)^3\mapsto\delta_{\vartheta}:=\inf_{\mathbf e\in S^2\atop x\in\mathbf R^3}\langle{\rm Id}+ \nabla \vartheta(x),\mathbf e\rangle\cdot \mathbf e\in\mathbf R$ ($S^2$ stands for the unit 2 dimensional sphere)
is well defined and continuous. For any $\vartheta_0\in \tilde D^1_0(\mathbf R^3)$, we have $\delta_{\vartheta_0}>0$ and for all $x,y\in\mathbf R^3$ and $\mathbf e:=(y-x)/|y-x|$ the following estimate holds:
$(y+\vartheta(y)-x-\vartheta(x))\cdot \mathbf e=|y-x|\int_0^1\langle {\rm Id}+\nabla\vartheta(x+t\mathbf e),\mathbf e\rangle\cdot \mathbf e\,{\rm d}t>|y-x|\delta_\vartheta$.
We deduce that ${\rm Id}+\vartheta$ is one-to-one if $\vartheta$ is close enough to $\vartheta_0$. Further, still for $\vartheta$ close enough to $\vartheta_0$, ${\rm Id}+\vartheta$ is a local diffeomorphism (according to the local inversion Theorem) and hence it is onto.
\end{proof}

\begin{definition}
\label{def_D10}
We denote $D^1_0(\mathbf R^3)$ the connected component of $\tilde D^1_0(\mathbf R^3)$ that contains the identically zero function.
\end{definition}

 If $\vartheta\in C^1_0(\mathbf R^3)^3$ is such that $\|\vartheta\|_{C^1_0(\mathbf R^3)^3}<1$, the local inversion Theorem and a fixed point argument ensure that $\rm{Id}+\vartheta$ is a $C^1$ diffeomorphism so we deduce that $D^1_0(\mathbf R^3)$ contains the unit ball of $C^1_0(\mathbf R^3)^3$.

\subsubsection*{Sobolev spaces} 
\begin{itemize}
\item We define the weight function $\theta(x):=\sqrt{1+|x|^2}$ ($x\in\mathbf R^3$) and the weighted Sobolev spaces:
\begin{align}
W^1_0(\mathcal F)&:=\big\{u\in \mathcal D'(\mathcal F)\,:\, \theta^{-1}u\in L^2(\mathcal F)\big\},\\
\stackrel{\circ}{W_0^1}(\mathcal F)&:=\big\{u\in W^1_0(\mathcal F)\,:\,\gamma_\Sigma(u)=0\big\},
\end{align}
where $\gamma_\Sigma:W^1_0(\mathcal F)\mapsto  H^{1/2}(\Sigma)$ is the classical trace operator. The dual space of $\stackrel{\circ}{W_0^1}(\mathcal F)$ is $W^{-1}_0(\mathcal F)$. 
\item For any Banach space $E$, $W^{1,1}([0,T],E)$ is the Bochner-Sobolev spaces (see for instance \cite[\S 7.1, page 187]{Roubicek:2005aa}) consisting in the functions $u:[0,T]\mapsto E$ measurable and such that $u$ and $u'$ belong to $L^1([0,T],E)$ (the derivative $u'$ as to be understood in the sense of the distributions).  It can be proved that $W^{1,1}([0,T],E)$ is continuously embedded in $C([0,T],E)$ and that $u(t)=u(0)+\int_0^t u'(s)\,{\rm d}s$ for all $t\in[0,T]$ and all $u\in W^{1,1}([0,T],E)$, where the integral is a Bochner integral (a generalization to Banach space valued functions of the Lebesgue integral). The space $W^{1,1}([0,T],E)$ is endowed with the norm $\|u\|_{W^{1,1}([0,T],E)}:=\|u\|_{C([0,T],E)}+\int_0^T\|u'(s)\|_E\,{\rm d}s$.
\end{itemize}

\section{Control functions smoothing}
\begin{proposition}
\label{PROPo}
For every $\varepsilon>0$ and every $\vartheta\in\mathcal A$, there exists $\bar\vartheta\in C^\omega([0,T],D^1_0(\mathbf R^3))\cap\mathcal A$   such that $\|\bar\vartheta-\vartheta\|_{W^{1,1}([0,T],D^1_0(\mathbf R^3))}<\varepsilon$ and $\bar\vartheta_{t=0}=\vartheta_{t=0}$. In particular $C^\omega([0,T],D^1_0(\mathbf R^3))\cap\mathcal A$ is dense in $\mathcal A$.
\end{proposition}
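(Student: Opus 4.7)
The strategy is twofold. First, I would approximate $\vartheta$ in the $W^{1,1}$ norm by a time-analytic function $\tilde\vartheta$, temporarily forgetting the self-propelled constraints. Second, I would restore the constraints by applying the rigid-correction construction used in the proof of Proposition~\ref{PROP:1_1} to $\tilde\vartheta$. The key point is that this construction preserves analyticity in time and depends continuously on its input, so that starting $W^{1,1}$-close to $\vartheta \in \mathcal A$ one lands back $W^{1,1}$-close to $\vartheta$.

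\textbf{Step 1 (analytic approximation, ignoring constraints).} Extend $\partial_t\vartheta$ beyond $[0,T]$ and mollify in time to obtain a $C^\infty([0,T],C^1_0(\mathbf R^3)^3)$ approximation $\vartheta^\varepsilon$, $W^{1,1}$-close to $\vartheta$. Then approximate $\vartheta^\varepsilon$ and $\partial_t\vartheta^\varepsilon$ simultaneously in the uniform norm by the same polynomial sequence (Bernstein-type polynomials in $t$ with coefficients in $C^1_0(\mathbf R^3)^3$), producing a time-analytic $\tilde\vartheta$. Correct by a small affine term $(1-t/T)(\vartheta_{t=0}-\tilde\vartheta_{t=0})$ so that $\tilde\vartheta_{t=0}=\vartheta_{t=0}$, without spoiling the $W^{1,1}$ closeness. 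Since $D^1_0(\mathbf R^3)$ is open in $C^1_0(\mathbf R^3)^3$ and the approximation is uniform in $t$, the values of $\tilde\vartheta$ stay in $D^1_0(\mathbf R^3)$ for a sufficiently close approximation.

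\textbf{Step 2 (restoring the constraints, preserving analyticity).} Apply verbatim to $\tilde\vartheta$ the recipe from the proof of Proposition~\ref{PROP:1_1}: form $\bar{\mathbf s}(t)$, $\mathbb J(t)$, $\boldsymbol\chi(t)=\mathbb J(t)^{-1}\!\int_\Sigma\partial_t\bar\varTheta_t\times\bar\varTheta_t\,\mathrm d\sigma$, solve $\dot Q=Q\hat{\boldsymbol\chi}$ with $Q(0)={\rm Id}$, set $\mathbf s(t)=-Q(t)\bar{\mathbf s}(t)$ and $\hat\varTheta_t:=Q(t)\tilde\varTheta_t+\mathbf s(t)$, and define $\bar\varTheta$ via the flow of $\dot X(t,x)=\xi(x)\,\partial_t\hat\varTheta_t(x)+(1-\xi(x))\,\partial_t\tilde\varTheta_t(x)$ with $X(0,x)=\varTheta_{t=0}(x)$, where $\xi$ is the spatial cut-off of Proposition~\ref{PROP:1_1}. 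Because $\tilde\vartheta$ is analytic in $t$, so are $\bar{\mathbf s}$, $\mathbb J$, $\mathbb J^{-1}$ (invertibility is guaranteed as in the proof of Proposition~\ref{PROP:1_1}) and hence $\boldsymbol\chi$; $Q$ is then the solution of an ODE on ${\rm SO}(3)$ with analytic right-hand side, hence analytic; similarly $X$ solves a Banach-space Cauchy problem in $C^1_0(\mathbf R^3)^3$ with an analytic-in-$t$ vector field, hence $\bar\vartheta:=X-{\rm Id}$ belongs to $C^\omega([0,T],D^1_0(\mathbf R^3))$. Proposition~\ref{PROP:1_1} ensures $\bar\vartheta\in\mathcal A$, and $Q(0)={\rm Id}$, $\mathbf s(0)=\mathbf 0$ together with $\tilde\vartheta_{t=0}=\vartheta_{t=0}$ give $\bar\vartheta_{t=0}=\vartheta_{t=0}$.

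\textbf{Step 3 (closeness in $W^{1,1}$) and main obstacle.} The crucial observation is that when the step~2 recipe is applied to $\vartheta$ itself (which already lies in $\mathcal A$), it returns $\vartheta$: the first identity of \eqref{self-propelled-cond} yields $\bar{\mathbf s}\equiv\mathbf 0$, the second yields $\boldsymbol\chi\equiv\mathbf 0$, whence $Q\equiv{\rm Id}$, $\mathbf s\equiv\mathbf 0$, and the flow ODE collapses to $\dot X=\partial_t\varTheta_t$, $X(0)=\varTheta_{t=0}$, so $X=\varTheta$. The result therefore reduces to the statement that the map $\tilde\vartheta\mapsto\bar\vartheta$ is continuous from a $W^{1,1}$-neighborhood of $\vartheta$ into $W^{1,1}([0,T],D^1_0(\mathbf R^3))$. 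The main technical obstacle is turning this continuity into a quantitative Lipschitz-type estimate: one must propagate a $W^{1,1}$ perturbation of $\tilde\vartheta$ through the algebraic formulas for $\bar{\mathbf s}$ and $\mathbb J$ (using a uniform bound on $\mathbb J^{-1}$ on the neighborhood), through the linear ODE for $Q$ (Gr\"onwall), and finally through the Banach-space flow for $X$ (Gr\"onwall again), to obtain $\|\bar\vartheta-\vartheta\|_{W^{1,1}}\leq C\,\|\tilde\vartheta-\vartheta\|_{W^{1,1}}$. Once this is in hand, selecting the approximation $\tilde\vartheta$ of step~1 fine enough delivers $\|\bar\vartheta-\vartheta\|_{W^{1,1}([0,T],D^1_0(\mathbf R^3))}<\varepsilon$, concluding the proof.
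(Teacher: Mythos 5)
Your argument follows the same two-step structure as the paper's proof: first build a time-analytic approximant of $\vartheta$, then restore the self-propelled constraints by rerunning the rigid-correction recipe of Proposition~\ref{PROP:1_1} and tracking $W^{1,1}$-smallness through it. The only real difference is in Step~1, where the paper proceeds more directly: it approximates $\partial_t\vartheta$ in $L^1([0,T],C^1_0(\mathbf R^3)^3)$ by an analytic $\zeta$ and sets $\tilde\vartheta_t:=\vartheta_{t=0}+\int_0^t\zeta(s)\,\mathrm ds$, which automatically matches the initial condition without your mollification/Bernstein/affine-correction triple. Your ``fixed-point'' observation in Step~3 — that the recipe applied to $\vartheta\in\mathcal A$ returns $\vartheta$ itself, since the constraints force $\bar{\mathbf s}\equiv\mathbf 0$, $\boldsymbol\chi\equiv\mathbf 0$, $Q\equiv\mathrm{Id}$ — is exactly what makes the paper's chain of estimates ($\|\tilde{\mathbf s}\|$, $\|\tilde{\mathbb J}^{-1}-\mathbb J^{-1}\|$, $\|\tilde{\boldsymbol\chi}\|$, $\|Q-\mathrm{Id}\|$ all small) go through; you state it as a fixed-point-plus-continuity principle, the paper spells it out term by term, but the content is the same. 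One small inaccuracy: the final Cauchy problem $\dot X(t,x)=\xi(x)\,\partial_t\hat\varTheta_t(x)+(1-\xi(x))\,\partial_t\tilde\varTheta_t(x)$ has a right-hand side that does not depend on $X$, so $X$ is obtained by direct time integration rather than by a genuine flow — no Gr\"onwall argument is needed at that last stage.
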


\begin{proof}
Let $\vartheta$ be in $\mathcal A$. Since, $C^\omega([0,T],C^1_0(\mathbf R^3)^3)$ is dense in $L^1([0,T],C^1_0(\mathbf R^3)^3)$, we can always pick an element $\zeta\in C^\omega([0,T],C^1_0(\mathbf R^3)^3)$ which makes $\|\zeta-\partial_t\vartheta\|_{L^1([0,T],C^1_0(\mathbf R^3)^3)}$ as small as required. Define for every $t\in[0,T]$ the analytic function $\tilde\vartheta_t=\vartheta_{t=0}+\int_0^t\zeta(s)\,{\rm d}s$, keeping in mind that the quantity $\|\tilde\vartheta-\vartheta\|_{W^{1,1}([0,T],D^1_0(\mathbf R^3))}$ can be made arbitrarily small.
Following the lines of the proof of Proposition~\ref{PROP:1_1}, we define $\tilde{\mathbf s}(t):=(1/4\pi)\int_\Sigma \tilde\varTheta_t\,{\rm d}\sigma$, $\tilde\varTheta^\dag_t:=\tilde\varTheta_t-\tilde{\mathbf s}(t)$, the matrix $\tilde{\mathbb J}(t):=\int_\Sigma\|\tilde\varTheta^\dag_t\|^2_{\mathbf R^3}{\rm Id}-\tilde\varTheta^\dag_t\otimes\tilde\varTheta^\dag_t{\rm d}\sigma$ (positive definite for every $t\in[0,T]$) and $\tilde{\boldsymbol\chi}(t):=\tilde{\mathbb J}(t)^{-1}\int_\Sigma\partial_t\tilde\varTheta^\dag_t\times\tilde\varTheta^\dag_t\,{\rm d}\sigma$. Let us introduce as well ${\mathbb J}(t):=\int_\Sigma\|\varTheta_t\|^2_{\mathbf R^3}{\rm Id}-\varTheta_t\otimes\varTheta_t{\rm d}\sigma$. Observing again that the quantity $\|\tilde\vartheta-\vartheta\|_{W^{1,1}([0,T],D^1_0(\mathbf R^3))}$ can be arbitrarily small, we draw the same conclusion for 
$\|\tilde{\mathbf s}\|_{W^{1,1}([0,T],\mathbf R^3)}$, then for $\|\tilde{\mathbb J}(t)^{-1}-\mathbb J(t)^{-1}\|_{C^0([0,T],M(3))}$ and finally for $\|\tilde{\boldsymbol\chi}\|_{L^1([0,T],\mathbf R^3)}$. We infer that $\|Q-{\rm Id}\|_{W^{1,1}([0,T],M(3))}$, the solution to the Cauchy problem $\partial_tQ_t=Q_t \tilde{\boldsymbol\chi}(t)$ with initial data $Q_{t=0}={\rm Id}$ is arbitrarily small as well. Then we set 
$\varTheta^\star_t=Q(t)\tilde\varTheta_t^\dag$. At this point, $\varTheta^\star$ satisfies \eqref{self-propelled-cond} but $\vartheta^\star_t$ (for $t\in[0,T]$) is unlikely in $D^1_0(\mathbf R^3)$ (because $\varTheta^\star(x)\to Q(t)(x-\tilde{\mathbf s}(t))\neq x$ as $\|x\|_{\mathbf R^3}\to+\infty$). Notice however that for every smooth compactly supported function $\xi:\mathbf R^3\to\mathbf R$, the quantity $\|\xi(\vartheta^\star-\vartheta)\|_{W^{1,1}([0,T],D^1_0(\mathbf R^3))}$ can be made small. Let $\Omega$ and $\Omega'$ be large balls such that $\bigcup_{t\in[0,T]}\varTheta^\star_t(\bar B)\subset\Omega$ and $\bar\Omega\subset\Omega'$ and define $\xi$ as a cut-off function valued in $[0,1]$ and such that  $\xi =1$ in $\Omega$ and $\xi=0$ in $\mathbf R^3\setminus\bar\Omega'$. To complete the proof, define $\bar\varTheta$ as the flow associated with the Cauchy problem $\dot X(t,x)=\xi(x)\partial_t\vartheta^\star_t(x)+(1-\xi(x))\partial_t\vartheta_t(x)$, $X(0,x)=\varTheta_{t=0}(x)$. Indeed, $\|\vartheta-\bar\vartheta\|_{W^{1,1}([0,T],D^1_0(\mathbf R^3))}$ goes to 0 as $\|\xi(\partial_t\vartheta^\star_t-\partial_t\vartheta)\|_{L^1([0,T],C^1_0(\mathbf R^3)^3)}$ goes to $0$.
\end{proof}
%===========================
\section{Stokes Problem and Change of Variables}
\label{SEC:Stokes}
\subsection{Well-posedness of the Stokes problem in an exterior domain}
The following results that can be found in \cite{Girault:1991aa}:
\begin{theorem}
\label{exist_stokes}
Let $\Sigma$ be connected an Lipschitz continuous. Then, 
for any $(\mathbf f,g,\mathbf h)\in (W_0^{-1}(\mathcal F))^3\times  L^2(\mathcal F)\times (H^{1/2}(\Sigma))^3$, there exists a unique pair $(\mathbf u,p)\in (W^1_0(\mathcal F))^3\times L^2(\mathcal F)$ such that:
\begin{subequations}
\begin{alignat}{3}
-\Delta \mathbf u+\nabla p&=\mathbf f&\quad&\text{in }\mathcal F,\\
\nabla\cdot \mathbf u&=g&& \text{in }\mathcal F,\\
\mathbf u&=\mathbf h&&\text{on }\Sigma.
\end{alignat}
\end{subequations}
The solution has to be understood in the weak sense, namely:
\begin{subequations}
\label{stokes_varia}
\begin{align}
\int_{\mathcal F}\!\!\nabla \mathbf u:\nabla \mathbf v\,{\rm d}x-\int_{\mathcal F}\!\!p\nabla\cdot \mathbf v)\,{\rm d}x&=\langle \mathbf f,\mathbf v\rangle_{(W^{-1}_0)^3\times (\stackrel{\circ}{W^1_0})^3},\quad\forall\,\mathbf v\in  
(\stackrel{\circ}{W^1_0}(\mathcal F))^3,\\
\nabla\cdot \mathbf u&=g\quad\text{in }\mathcal F,\\
\gamma_\Sigma(\mathbf u)&=\mathbf h\quad\text{on }\Sigma.  
\end{align}
\end{subequations}
Besides, there exists a constant $C_{\mathcal F}>0$ (depending on $\mathcal F$ only) such that:
$$\|\mathbf u\|_{ (W^1_0(\mathcal F))^3}+\|p\|_{L^2(\mathcal F)}\leq C_{\mathcal F}[\|\mathbf f\|_{(W^{-1}_0(\mathcal F))^3}+\|g\|_{L^2(\mathcal F)}+\|\mathbf h\|_{(H^{1/2}(\Sigma))^3}].$$
\end{theorem}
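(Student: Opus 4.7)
The plan is to establish the result by the standard lifting-plus-Lax-Milgram strategy, adapted to the weighted Sobolev setting appropriate for the exterior domain $\mathcal F$. First I would reduce to homogeneous boundary data: using surjectivity of the trace $\gamma_\Sigma : W_0^1(\mathcal F) \to H^{1/2}(\Sigma)$ for a connected Lipschitz $\Sigma$ (obtained by working inside a large ball containing $\Sigma$, applying the classical trace inverse there, and extending by zero far from $\Sigma$ via a smooth cut-off), I obtain a lifting $\mathbf u_{\mathbf h}\in (W_0^1(\mathcal F))^3$ with $\gamma_\Sigma\mathbf u_{\mathbf h}=\mathbf h$ and $\|\mathbf u_{\mathbf h}\|_{W_0^1}\leq C\|\mathbf h\|_{H^{1/2}(\Sigma)}$. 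Subtracting $\mathbf u_{\mathbf h}$ reduces the problem to the case $\mathbf h=\mathbf 0$ with modified data $\mathbf f+\Delta\mathbf u_{\mathbf h}\in (W_0^{-1}(\mathcal F))^3$ and $g-\nabla\cdot\mathbf u_{\mathbf h}\in L^2(\mathcal F)$.

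Second, I would handle the divergence constraint by constructing a bounded right inverse of the divergence: find $\mathbf w\in (\stackrel{\circ}{W_0^1}(\mathcal F))^3$ solving $\nabla\cdot\mathbf w=g-\nabla\cdot\mathbf u_{\mathbf h}$, with norm controlled by the right-hand side. This is the Bogovskii problem on an exterior Lipschitz domain in the weighted setting. After subtracting $\mathbf w$, only the incompressible homogeneous-boundary problem remains: find $(\tilde{\mathbf u},p)\in (\stackrel{\circ}{W_0^1}(\mathcal F))^3\times L^2(\mathcal F)$ with $\nabla\cdot\tilde{\mathbf u}=0$ and $-\Delta\tilde{\mathbf u}+\nabla p=\tilde{\mathbf f}$ for a rewritten $\tilde{\mathbf f}\in (W_0^{-1}(\mathcal F))^3$.

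Third, I would cast the reduced problem variationally on the closed subspace $V:=\{\mathbf v\in (\stackrel{\circ}{W_0^1}(\mathcal F))^3 : \nabla\cdot\mathbf v=0\}$ and apply Lax-Milgram to $a(\mathbf u,\mathbf v):=\int_{\mathcal F}\nabla\mathbf u:\nabla\mathbf v\,\mathrm dx$. Coercivity is the crucial analytic ingredient and rests on the Hardy-type inequality $\|\theta^{-1}\mathbf v\|_{L^2(\mathcal F)}\leq C\|\nabla\mathbf v\|_{L^2(\mathcal F)}$ valid for $\mathbf v\in \stackrel{\circ}{W_0^1}(\mathcal F)$, which by the very definition of the weighted norm makes $\mathbf v\mapsto \|\nabla\mathbf v\|_{L^2}$ equivalent to $\|\mathbf v\|_{W_0^1}$ on $\stackrel{\circ}{W_0^1}$. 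The pressure $p\in L^2(\mathcal F)$ is then recovered by a de Rham-type argument: the momentum equation holds against every $\mathbf v\in V$, so its image in $V^\perp$ (inside $(W^{-1}_0(\mathcal F))^3$) is a gradient, and the closed-range/inf-sup statement — itself a direct consequence of the divergence right-inverse used above — ensures $p\in L^2(\mathcal F)$. The continuity estimate follows by concatenating the bounds of the three steps.

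The main obstacle is purely technical: it is the careful verification of the weighted-space analogues of the three classical ingredients, namely surjectivity of the trace on $W_0^1(\mathcal F)$, existence of a bounded right inverse of the divergence on $\stackrel{\circ}{W_0^1}(\mathcal F)$, and the Hardy inequality underlying coercivity. These are well documented in the Girault-Raviart / Amrouche-Girault framework cited, but none are entirely trivial because the standard Hilbert $H^1$ arguments break down at infinity — one must genuinely exploit the weight $\theta(x)=\sqrt{1+|x|^2}$ to control $\mathbf u$ far from $\Sigma$. Once these three tools are in place, the combination of lifting, Lax-Milgram and de Rham yields existence, uniqueness, and the continuity estimate in one stroke.
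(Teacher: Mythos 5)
The paper does not prove Theorem~\ref{exist_stokes}; it invokes it directly as a known result from Girault \cite{Girault:1991aa}, so there is no internal argument to compare yours against. Your blueprint --- lift the Dirichlet data via surjectivity of the trace on $W^1_0(\mathcal F)$, absorb the divergence constraint with a Bogovskii-type right inverse in the weighted setting, solve the reduced problem by Lax--Milgram on the divergence-free subspace with coercivity provided by the Hardy inequality $\|\theta^{-1}v\|_{L^2(\mathcal F)}\leq C\|\nabla v\|_{L^2(\mathcal F)}$, and recover $p\in L^2(\mathcal F)$ by a de~Rham argument using the inf-sup condition equivalent to the divergence right inverse --- is the standard and essentially correct route in the Amrouche--Girault framework, and you have correctly isolated the three genuinely nontrivial weighted-space ingredients.

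Two remarks. The Hardy inequality, hence coercivity of $a(\mathbf u,\mathbf v)=\int_{\mathcal F}\nabla\mathbf u:\nabla\mathbf v\,{\rm d}x$, in fact holds on all of $W^1_0(\mathcal F)$ when $n=3$, not only on the zero-trace subspace; it fails for $n=2$ without a logarithmic correction to the weight, which is why the ambient dimension matters here even though your sketch never says so. More substantively, your second step silently assumes that $\nabla\cdot : (\stackrel{\circ}{W^1_0}(\mathcal F))^3\to L^2(\mathcal F)$ is onto with no compatibility condition on $g$. That is true in this weighted exterior setting --- mass can escape to infinity, since functions in $W^1_0(\mathcal F)$ may decay like $|x|^{-1}$ and thus carry nonzero flux through large spheres --- but it is precisely the point at which the argument departs from the familiar bounded-domain Bogovskii lemma (where $\int_\Omega g\,{\rm d}x=\int_{\partial\Omega}\mathbf h\cdot\mathbf n\,{\rm d}\sigma$ is required), and it deserves an explicit justification rather than being treated as routine.
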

%We denote $S:(f,g,h)\in \mathbf W^{-1}(\mathcal F)\times  L^2(\mathcal F)\times \mathbf H^{1/2}(\Sigma)\mapsto (u,p)\in\mathbf W^1(\mathcal F)\times L^2(\mathcal F)$ the Stokes operator. According to the theorem, this operator is an isomorphism. 
\subsection{Change of variables}
\label{SUB:change_of_variables}
We denote, for all  $\xi\in D^1_0(\bar B,\mathbf R^3)$, $\varUpsilon:={\rm Id}+\xi$,  $J_\xi:=\det(\nabla\varUpsilon)$ and we define the matrices $\mathbb A_\xi:=(\nabla\varUpsilon^\ast\nabla\varUpsilon)^{-1}J_\xi$ and $\mathbb B_\xi:=(\nabla\varUpsilon^\ast)^{-1} J_\xi$. 
\begin{proposition}
\label{prop:change_of_variables}
If $\Sigma$ is Lipschitz continuous, for all $\xi\in D^1_0(\bar B,\mathbf R^3)$ and for all $(\mathbf f,g,\mathbf h)\in (W^{-1}_0(\mathcal F))^3\times L^2(\mathcal F)\times (H^{1/2}(\mathcal F))^3$ the following problem:
\begin{subequations}
\label{stokes_change}
\begin{align}
\int_{\mathcal F}\!\!\nabla \mathbf U_\xi\mathbb A_\xi: \nabla \mathbf V\,{\rm d}x-
\int_{\mathcal F}\!\!P_\xi\mathbb B_\xi:\nabla \mathbf V\,{\rm d}x&=
\langle \mathbf f,\mathbf V\rangle_{(W^{-1}_0)^3\times (\stackrel{\circ}{W^1_0})^3},&\quad\forall\,\mathbf V\in (\stackrel{\circ}{W^1_0}(\mathcal F))^3,\\
\mathbb B_\xi:\nabla \mathbf U_\xi&=g\quad\text{in }\mathcal F,\\
\gamma_{\Sigma}(\mathbf U_\xi)&=\mathbf h\quad\text{on }\Sigma,
\end{align}
\end{subequations}
admits a unique solution $(\mathbf U_\xi,P_\xi)\in(W^1_0(\mathcal F))^3\times L^2(\mathcal F)$. Moreover, there exists a constant $C_\xi(\mathcal F)>0$ depending on $\mathcal F$ and $\xi$ only such that:
$$\|\mathbf U_\xi\|_{(W^1_0(\mathcal F))^3}+\|P_\xi\|_{L^2(\mathcal F)}\leq C_\xi(\mathcal F)[\|\mathbf f\|_{(W^{-1}_0(\mathcal F))^3}+\|g\|_{L^2(\mathcal F)}+\|\mathbf h\|_{(H^{1/2}(\Sigma))^3}].$$
\end{proposition}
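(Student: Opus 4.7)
\textbf{Proof plan for Proposition~\ref{prop:change_of_variables}.} The strategy is to reduce the problem to the classical existence result of Theorem~\ref{exist_stokes} via the change of variables $\varUpsilon := \mathrm{Id} + \xi$. The key observation is that the weak formulation (\ref{stokes_change}) is precisely what one obtains by pulling back a standard Stokes problem posed on the image domain $\tilde{\mathcal F} := \varUpsilon(\mathcal F)$ back to the reference domain $\mathcal F$ through the diffeomorphism $\varUpsilon$.

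\emph{Step 1 (properties of the diffeomorphism).} Since $\xi \in D^1_0$, the map $\varUpsilon$ is a $C^1$ diffeomorphism of $\mathbf R^3$ which coincides with the identity outside a sufficiently large ball, and $J_\xi$ is bounded away from $0$ and $+\infty$ by constants depending only on $\xi$. Hence $\tilde{\mathcal F}$ is again an exterior domain with a connected Lipschitz boundary $\tilde\Sigma := \varUpsilon(\Sigma)$. Because $\varUpsilon$ leaves the behaviour at infinity untouched, the pull-back map $v \mapsto v \circ \varUpsilon$ is a bounded linear isomorphism $W^1_0(\tilde{\mathcal F}) \to W^1_0(\mathcal F)$, $\stackrel{\circ}{W^1_0}(\tilde{\mathcal F}) \to \stackrel{\circ}{W^1_0}(\mathcal F)$, $L^2(\tilde{\mathcal F}) \to L^2(\mathcal F)$, and, at the boundary, $H^{1/2}(\tilde\Sigma) \to H^{1/2}(\Sigma)$, whose operator norm (and that of its inverse) depends on $\xi$ only.

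\emph{Step 2 (data transfer and equivalent problem).} I would then push the data $(\mathbf f, g, \mathbf h)$ forward to $\tilde{\mathcal F}$: define $\tilde{\mathbf h} := \mathbf h \circ \gamma_{\tilde\Sigma} \varUpsilon^{-1} \in (H^{1/2}(\tilde\Sigma))^3$, set $\tilde g \in L^2(\tilde{\mathcal F})$ so that it corresponds to $g$ under the divergence identity $\mathbb B_\xi : \nabla \mathbf U = (\nabla\cdot \mathbf u)\circ\varUpsilon \, J_\xi$, and define $\tilde{\mathbf f} \in (W^{-1}_0(\tilde{\mathcal F}))^3$ by duality $\langle \tilde{\mathbf f}, \mathbf v\rangle := \langle \mathbf f, \mathbf v \circ \varUpsilon\rangle$. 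Using the elementary identities $\nabla(\mathbf u \circ \varUpsilon) = (\nabla \mathbf u \circ \varUpsilon)\,\nabla\varUpsilon$ and the substitution formula $\int_{\tilde{\mathcal F}} \Phi\,\mathrm dx = \int_{\mathcal F} \Phi \circ \varUpsilon \, J_\xi\,\mathrm dx$, a direct computation shows that $(\mathbf U_\xi, P_\xi) \in (W^1_0(\mathcal F))^3 \times L^2(\mathcal F)$ satisfies (\ref{stokes_change}) if and only if $(\mathbf u, p) := (\mathbf U_\xi \circ \varUpsilon^{-1}, P_\xi \circ \varUpsilon^{-1}) \in (W^1_0(\tilde{\mathcal F}))^3 \times L^2(\tilde{\mathcal F})$ solves the standard Stokes problem on $\tilde{\mathcal F}$ with data $(\tilde{\mathbf f}, \tilde g, \tilde{\mathbf h})$ in the sense of (\ref{stokes_varia}). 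The matrices $\mathbb A_\xi$ and $\mathbb B_\xi$ are exactly what emerges: $\mathbb A_\xi$ from the combination $(\nabla\varUpsilon)^{-1}(\nabla\varUpsilon)^{-\ast} J_\xi$ in the bilinear form and $\mathbb B_\xi = (\nabla\varUpsilon)^{-\ast} J_\xi$ in the pressure--divergence pairing.

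\emph{Step 3 (existence, uniqueness and estimate).} Theorem~\ref{exist_stokes} applied on the Lipschitz exterior domain $\tilde{\mathcal F}$ yields existence, uniqueness and the bound $\|\mathbf u\|_{(W^1_0(\tilde{\mathcal F}))^3} + \|p\|_{L^2(\tilde{\mathcal F})} \leq C_{\tilde{\mathcal F}} \bigl[\|\tilde{\mathbf f}\|_{(W^{-1}_0)^3} + \|\tilde g\|_{L^2} + \|\tilde{\mathbf h}\|_{(H^{1/2})^3}\bigr]$. Pulling this inequality back through the isomorphisms of Step~1 (which absorbs $C_{\tilde{\mathcal F}}$ and the Jacobian factors into a constant $C_\xi(\mathcal F)$ depending only on $\mathcal F$ and $\xi$) yields the announced estimate for $(\mathbf U_\xi, P_\xi)$. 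Uniqueness transfers similarly: two solutions of (\ref{stokes_change}) push forward to two solutions of the same standard Stokes problem on $\tilde{\mathcal F}$, which must coincide.

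\emph{Main obstacle.} The conceptual content is immediate, but the execution requires careful bookkeeping of the change of variables: tracking how test functions, divergences, traces and duality pairings transform so that the coefficient matrices in (\ref{stokes_change}) are genuinely $\mathbb A_\xi$ and $\mathbb B_\xi$ as defined, and verifying that the pull-back preserves the weighted spaces $W^1_0$ and $W^{-1}_0$ (this uses decisively that $\xi$ has compact support, so the weight $\theta(x) = \sqrt{1+|x|^2}$ is essentially unchanged under $\varUpsilon$). Once this is in place, the rest is a direct transfer of Theorem~\ref{exist_stokes}.
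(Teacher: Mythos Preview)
Your argument is correct and is exactly the paper's proof: push the data through $\varUpsilon$ to the image domain, invoke Theorem~\ref{exist_stokes} there, and pull the solution and estimate back via the change-of-variables isomorphisms (the paper packages this as the composition $T_\xi = H_\xi \circ S_\xi \circ R_\xi$). One small slip: elements of $D^1_0(\mathbf R^3)\subset C^1_0(\mathbf R^3)^3$ need not be compactly supported but only vanish at infinity; this is still enough for $\theta(\varUpsilon(x))/\theta(x)$ to be bounded above and below, so your weighted-space isomorphism claim survives.
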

\begin{proof}
Let us introduce $\mathcal F_\xi:=\varUpsilon(\mathcal F)$, $\Sigma_\xi:=\varUpsilon(\Sigma)$, $g_\xi:=g\circ\varUpsilon^{-1}/(J_\xi\circ\varUpsilon^{-1})$ and $\mathbf h_\xi:=\mathbf h\circ\varUpsilon^{-1}$. We denote by $\mathbf f_\xi$ the distribution in $(W^{-1}_0(\mathcal F_\xi))^{-1}$ defined by 
$$\langle \mathbf f_\xi,\varphi\rangle_{(W^{-1}_0(\mathcal F_\xi))^3\times (\stackrel{\circ}{W^1_0}(\mathcal F_\xi))^3}:=\langle \mathbf f,\varphi\circ\varUpsilon\rangle_{(W^{-1}_0(\mathcal F))^3\times (\stackrel{\circ}{W^1_0}(\mathcal F))^3},\quad\forall\,\varphi\in \stackrel{\circ}{W^1_0}(\mathcal F))^3.$$
This definition makes sense because there exist two constants $\alpha_i(\xi)>0$ ($i=1,2$) such that $\alpha_1(\xi)\|\varphi\|_{(W^1_0(\mathcal F_\xi))^3}\leq 
\|\varphi\circ\varUpsilon\|_{(W^1_0(\mathcal F))^3}\leq \alpha_2(\xi)\|\varphi\|_{(W^1_0(\mathcal F_\xi))^3}$ for all $\varphi\in (W^1_0(\mathcal F_\xi))^3$.
Notice that when $\mathbf f$ is regular enough (i.e. can be identified with a function of $(L^1_{\rm loc}(\mathcal F))^3$) then we get merely $\mathbf f_\xi:=\mathbf f\circ\varUpsilon^{-1}/(J_\xi\circ\varUpsilon^{-1})$. It is easy to check that, according to the properties of $\xi$, the following mapping is a bicontinuous  isomorphism:
$$\begin{array}{rcl}R_\xi:(W^{-1}_0(\mathcal F))^3\times L^2(\mathcal F)\times (H^{1/2}(\mathcal F))^3&\hspace{-0.3cm}\to\hspace{-0.3cm}&
(W^{-1}_0(\mathcal F_\xi))^3\times L^2(\mathcal F_\xi)\times (H^{1/2}(\mathcal F_\xi))^3\\
(\mathbf f,g,\mathbf h)&\hspace{-0.3cm}\mapsto\hspace{-0.3cm}&(\mathbf f_\xi, g_\xi,\mathbf h_\xi),
\end{array}.$$
Denote  $(\mathbf u_\xi,p_\xi)=S_\xi(\mathbf f_\xi, g_\xi,\mathbf h_\xi)$ the unique solution to the Stokes problem \eqref{stokes_varia} in $\mathcal F_\xi$. The operator $S_\xi$ is hence a bicontinuous isomorphism mapping $(W^{-1}_0(\mathcal F))^3\times L^2(\mathcal F)\times (H^{1/2}(\mathcal F))^3$ onto $(W^1_0(\mathcal F_\xi))^3\times L^2(\mathcal F_\xi)$.
%\begin{align*}
%\int_{\mathcal F_\xi}\!\!\!\nabla u_\xi:\nabla v\,{\rm d}x-\int_{\mathcal F_\xi}\!\!\!p_\xi\nabla\cdot v\,{\rm d}x&=\langle \bar f_\xi,v\rangle_{\mathbf W^{-1}(\mathcal F_\xi)\times \mathbf W^1_0(\mathcal F_\xi)},\quad\forall\,v\in \mathbf W^1_0(\mathcal F_\xi),\\
%\nabla\cdot u_\xi&=\bar g_\xi\quad\text{in }\mathcal F_\xi,\\
%\gamma_{\Sigma_\xi}(u_\xi)&=\bar h_\xi\quad\text{on }\Sigma_\xi,
%\end{align*}
The following operator is a bicontinuous isomorphism as well:
$$\begin{array}{rcl}
H_\xi:(W^1_0(\mathcal F_\xi))^3\times L^2(\mathcal F_\xi)&\to&(W^1_0(\mathcal F))^3\times L^2(\mathcal F)\\
(\mathbf v,q)&\mapsto&(\mathbf V,Q)=(\mathbf v\circ\varUpsilon,q\circ\varUpsilon).
\end{array}
$$
The solution to problem \eqref{stokes_change} is provided by the operator $T_\xi:=H_\xi\circ S_\xi\circ R_\xi$ and the following diagram commutes: 
$$
\xymatrix{
    (\mathbf f,g,\mathbf h) \ar[r]^{T_\xi} \ar[d]_{R_\xi} & (\mathbf U_\xi,P_\xi)  \\
    (\mathbf f_\xi, g_\xi,\mathbf h_\xi)  \ar[r]^{S_\xi} & (\mathbf u_\xi,p_\xi) \ar[u]_{H_\xi}
  }
$$
The proof is then completed.
\end{proof}

%===================================
\bibliographystyle{abbrv}
\bibliography{control_cas_general_biblio}
\end{document}